\newcommand{\RedefinitSymbole}[1]{%
  \expandafter\let\csname old\string#1\endcsname=#1 \let#1=\relax
\newcommand{#1}{\csname old\string#1\endcsname\,}%
}
\def\cf{\textit{cf.}\kern.3em}
\def\resp{\textit{resp.}\kern.3em}
\renewcommand{\k}{\kern2pt}
\numberwithin{equation}{section} \makeatletter
\let\emptyset\varnothing
\DeclareMathOperator{\pic}{Pic}
\DeclareMathOperator{\rk}{rk}
\DeclareMathOperator{\Hom}{Hom}
\DeclareMathOperator{\Aut}{Aut}
\DeclareMathOperator{\spec}{Spec}
\DeclareMathOperator{\codim}{codim}
\newtheorem{construction}[equation]{Construction}
\newtheorem{proposition}[equation]{Proposition}
\newtheorem{theorem}[equation]{Theorem}
\newtheorem{corollary}[equation]{Corollary}
\newtheorem{lemma}[equation]{Lemma}
\theoremstyle{definition}
\newtheorem{definition}[equation]{Definition}
\newtheorem{remark}[equation]{\textbf{Remark}}
\newtheorem{notation}[equation]{\textbf{Notation}}
\newcommand{\m}[2]{$\mathcal{M}_{#1,#2}$}
\newcommand{\mb}[2]{$\overline{\mathcal{M}}_{#1,#2}$}
\newcommand{\azero}{$\overline{A_1}^{[n]}$}
\newcommand{\azerop}{$\overline{A_1}$}
\newcommand{\auno}{$\overline{A_2}$}
\newcommand{\adue}{$\overline{A_3}$}
\newcommand{\atre}{$\overline{A_4}$}
\newcommand{\mgn}{$\mathcal{M}_{g,n}$}
\newcommand{\mbgn}{$\overline{\mathcal{M}}_{g,n}$}
\newcommand{\mun}{$\mathcal{M}_{1,n}$}
\newcommand{\mbun}{$\overline{\mathcal{M}}_{1,n}$}
\newcommand{\virg}[1]{\textquotedblleft#1\textquotedblright}
\begin{document}

\date{}
\title{\textbf{Chen--Ruan Cohomology of \mun \space and \mbun}}
\author{Nicola Pagani}
\subjclass[2000]{Primary 14H10 14N35 55N32; Secondary 14D23 14H37 55P50}
\begin{abstract}In this work we compute the Chen--Ruan cohomology and the stringy Chow ring of the moduli spaces of smooth and stable $n$-pointed curves of genus $1$. In the first part of the paper we study and describe stack theoretically the twisted sectors of \mun \ and \mbun. In the second part, we study the orbifold intersection theory of \mbun. We suggest a definition for an Orbifold Tautological Ring in genus $1$, which is both a subring of the Chen--Ruan cohomology and of the stringy Chow ring.\end{abstract}
\maketitle
\setcounter{tocdepth}{2}
\tableofcontents

\section {Introduction}

Motivated by physics, Chen--Ruan cohomology was introduced in the paper \cite{chenruan} in the analytic category, and in the two papers by Abramovich--Graber--Vistoli \cite{agv1} and \cite{agv2} in the algebraic category. This has produced two parallel objects: the Chen--Ruan cohomology and the stringy Chow ring, which provide the basis to develop the quantum cohomology ring of an orbifold. This cohomology ring recovers as a subalgebra the ordinary rational cohomology ring of the topological space that underlies the orbifold. As a vector space, the Chen--Ruan cohomology of $X$ is simply the cohomology of the inertia stack of $X$. If $X$ is an orbifold, its \emph{inertia stack} $I(X)$ is, loosely speaking, constructed as the disjoint union, for $g$ in the stabilizer of some point $x$ of $X$, of the locus stabilized by $g$ in $X$ (see Definition \ref{definertia}). As an example, the orbifold $X$ itself appears as a connected component of $I(X)$, as the locus fixed by the identity automorphism, which is trivially in the stabilizer group of every point. All the other connected components of the inertia stack $I(X)$ are usually called \emph{twisted sectors}. In this paper we use the algebraic language, and whenever the word \virg{orbifold} is mentioned, it stands for smooth Deligne--Mumford stack.

Among the first examples of smooth Deligne--Mumford stacks in the literature there are the moduli of smooth pointed curves \mgn \ and their compactifications \mbgn. It seems thus interesting to study their Chen--Ruan cohomology. This has been done so far for \mb{1}{1} (a special case of weighted projective space) and for $\mathcal{M}_{2}$ and $\overline{\mathcal{M}}_2$ by Spencer \cite{spencer} (see also \cite{spencer2}).

In the present work, we investigate the Chen--Ruan cohomology ring for $\mathcal{M}_{1,n}$ and $\overline{\mathcal{M}}_{1,n}$ with rational coefficients, assuming knowledge of the cohomology of \mun \ and \mbun. We show how it is possible to describe the stringy Chow ring in a similar fashion. Indeed we show that for each twisted sector, the cycle map from the Chow ring to cohomology is an isomorphism.  

The main results of this paper are the complete stack-theoretic description of the twisted sectors, and the explicit computation of the Chen--Ruan product as an extension of the usual cup product.

\begin {theorem} \label{first} (Theorem \ref{twistedcompact}, Corollary \ref{fondamentale}) Each twisted sector of \mbun \space is isomorphic to
\begin{displaymath}
A \times \overline{\mathcal{M}}_{0,n_1} \times \overline{\mathcal{M}}_{0,n_2} \times \overline{\mathcal{M}}_{0,n_3} \times \overline{\mathcal{M}}_{0,n_4},
\end{displaymath}

\noindent where the $n_i \geq 3$ are integers and $A$ is in the set
\begin{displaymath}
\{B \mu_3, B \mu_4, B \mu_6, \mathbb{P}(4,6), \mathbb{P}(2,4), \mathbb{P}(2,2)\}.
\end{displaymath}
Here $B G$ is the classifying stack of principal $G$-bundles, and $\mathbb{P}(a,b)$ is a weighted projective stack.
\end {theorem}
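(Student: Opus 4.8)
\medskip
\noindent\emph{Proof strategy.}
The plan is to translate the determination of the twisted sectors into a classification problem for pairs $(C,g)$, where $C$ is a stable $n$-pointed curve of genus $1$ and $g$ is a nontrivial automorphism of $C$ of finite order. Indeed, by the description of the inertia recalled above, $I(\overline{\mathcal{M}}_{1,n})$ is the disjoint union over all such pairs of the locus stabilized by $g$; hence each twisted sector is the moduli stack of $n$-pointed stable genus-$1$ curves equipped with a $\langle g\rangle$-action of a fixed combinatorial type, and it suffices to enumerate these types and to identify the corresponding moduli stacks. Since an automorphism preserves the dual graph, I would carry out the enumeration along the stratification of $\overline{\mathcal{M}}_{1,n}$ by topological type.

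The geometric heart of the argument is the analysis of the \emph{genus-$1$ core} $C_{0}\subseteq C$, \ie the minimal connected subcurve of arithmetic genus $1$: it is either a smooth elliptic curve or a cycle of $r\ge 1$ smooth rational curves, and $C$ is obtained from $C_{0}$ by attaching rational trees. An automorphism $g$ restricts to an automorphism $g_{0}$ of $C_{0}$ that fixes every point where a tree is attached, and conversely $(C,g)$ is reconstructed from $(C_{0},g_{0})$ together with $g$-invariant rational tails supported at the fixed locus of $g_{0}$. I would first dispose of the core factor $A$. If $C_{0}$ is smooth and $g_{0}$ has order $3$, $4$ or $6$, then $C_{0}$ is forced to be the (rigid) elliptic curve with $j=0$, $j=1728$, $j=0$ respectively; computing the centralizer of $g_{0}$ inside the automorphism group of the whole configuration shows that the core contributes the trivial gerbe $B\mu_{3}$, $B\mu_{4}$ or $B\mu_{6}$. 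The only finite-order automorphism carried by a moving elliptic curve is the elliptic involution $-1$; in this case $g_{0}$ has exactly the four $2$-torsion points as fixed locus, and the core datum amounts to an elliptic curve together with the sub-collection of those points that carry a tail. Passing to Weierstrass presentations, on which $\mathbb{G}_m$ acts with weights $(4,6)$, $(2,4)$, $(2,2)$ according to whether zero, one, or at least two of the nonzero $2$-torsion points are distinguished, and using $\overline{\mathcal{M}}_{1,1}\cong\mathbb{P}(4,6)$, one gets $A\in\{\mathbb{P}(4,6),\mathbb{P}(2,4),\mathbb{P}(2,2)\}$. Finally, when $C_{0}$ is a cycle of rational curves the pair $(C_{0},g_{0})$ is rigid, but it occurs as a degeneration of one of the cases above (the nodal cubic being the limit of the elliptic curve with its involution), so it produces no new component, only boundary points of the sectors already found.

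It remains to handle the rational tails. A $g$-invariant rational tree attached at a fixed point of $g_{0}$ and carrying $n_i$ special points — its marked points together with the attaching node — moves in $\overline{\mathcal{M}}_{0,n_i}$, and since these genus-$0$ moduli spaces are fine, in particular have no twisted sectors, the automorphism is determined on a general member and imposes no further condition on the tail. The number of tails is bounded by the number of fixed points of $g_{0}$ on $C_{0}$, which is at most four (attained exactly for the elliptic involution), and stability forces each $n_i\ge 3$. Collecting the core factor with the (at most four) tail factors yields the asserted isomorphism $A\times\overline{\mathcal{M}}_{0,n_1}\times\cdots\times\overline{\mathcal{M}}_{0,n_4}$, with the convention that absent tails are omitted.

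I expect the main difficulty to lie not in the enumeration but in the bookkeeping of stack structures. Determining the band of each gerbe and the weights of each weighted projective stack requires computing the automorphism group of the full pointed configuration — not merely of its core — and comparing it with the subgroup $\langle g\rangle$; and one must carefully distinguish a genuine new component of the inertia from a boundary stratum of a component already listed, since many configurations with nontrivial automorphisms specialize one into another. This is the step that, I anticipate, occupies most of Section $3$.
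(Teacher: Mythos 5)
Your proposal reaches the correct list of factors, and it is sound in outline, but it follows a genuinely different route for the completeness part than the paper does. The paper first classifies the twisted sectors of the open parts $\mathcal{M}_{1,k}$, $k\leq 4$, via the Weierstrass theorem; it identifies the closures $\overline{A_2}\cong\mathbb{P}(2,4)$ and $\overline{A_3}\cong\overline{A_4}\cong\mathbb{P}(2,2)$ by writing explicit Weierstrass families over $\mathbb{A}^2_0$ and dividing out the $\mathbb{G}_m$-action (Lemma \ref{aicompact}; note the blow-up needed there so that the second section avoids the singular fibres, and the observation that $\overline{A_4}\to\overline{A_3}$ is an equivalence); it shows that gluing genus-$0$ tails onto a base twisted sector yields a twisted sector (Theorem \ref{partialtwistedcompact}); and it proves that nothing else occurs by induction on $n$, using that $\pi_{[n]}\colon\overline{\mathcal{M}}_{1,n+1}\to\overline{\mathcal{M}}_{1,n}$ is the universal curve (Theorem \ref{twistedcompact}), before assembling the product description in Corollary \ref{fondamentale}. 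You instead classify pairs $(C,g)$ directly through the genus-$1$ core/rational-tail decomposition of the dual graph; the Weierstrass identification of the one-dimensional factors is the same, but your completeness argument replaces the universal-curve induction by the stratum-by-stratum analysis. What your route buys is a transparent explanation of the shape of the answer: at most four genus-$0$ factors because $-1$ has exactly four fixed points, and weights $(4,6)$, $(2,4)$, $(2,2)$ according to how many nonzero $2$-torsion points are distinguished. What the paper's induction buys is that it never needs to discuss automorphisms of curves with nodal core, since everything is pulled back from the known case of $\overline{\mathcal{M}}_{1,1}$.

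Two points in your sketch need care to be complete. First, the band of the gerbe factor is, as you say, the centralizer of $g$ in the automorphism group of the whole marked configuration, and it is \emph{not} determined by the order of $g$ alone: for instance $(C_6^{[n]},\epsilon^2)$ has $g$ of order $3$ but factor $B\mu_6$, while $(C_6^{I_1,I_2},\epsilon^2)$ has factor $B\mu_3$; so your phrase ``order $3$, $4$ or $6$ gives $B\mu_3$, $B\mu_4$ or $B\mu_6$'' should be read as a statement about the centralizer, not the order. Second, the nodal-core step requires slightly more than ``it is a degeneration'': you must rule out rotation-type automorphisms of a cycle of rational curves (they move every smooth special point, hence are incompatible with the marked points and tails), and you must check that the surviving involutions admit equivariant smoothings of the core nodes (they exchange the two branches at each such node, hence act trivially on the smoothing parameters), so that these loci are boundary points of the sectors already listed rather than new components supported in the boundary. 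Both verifications are routine, and with them your argument closes; they are exactly what the paper's one-line induction via the universal curve is silently doing for you.
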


\begin {theorem} \label{second} (Theorem \ref{fundamental}) The Chen--Ruan cohomology ring of \mbun \ is generated as an algebra over the ordinary cohomology ring of \mbun \ by the fundamental classes of the twisted sectors with explicit relations.
\end {theorem}

The theory developed in this manuscript can be seen as an extension of the study of the cohomological properties of the moduli spaces of curves, initiated by Harer and Mumford (\cite{mumford}) in the eighties. The latter framework has produced important results in algebraic geometry, topology, mathematical and theoretical physics, representation theory and number theory. On the other hand, our results are the first steps towards the study and the understanding of more recent topics of investigation, such as the stringy topology and the Gromov-Witten theory of the moduli spaces of curves (see, for example, \cite{alr}). 

This paper is part of the PhD thesis \cite{paganitesi}, where the case of genus $g$ bigger than $1$ is also discussed. However, we believe that the genus $1$ case can be conveniently described within a more explicit and elementary framework, mainly thanks to the fact that the automorphism groups of stable genus $1$ marked curves are cyclic. In general we make an effort to limit to a minimum the use of technicalities, whereas various aspects of the theory could be developed in bigger generality to produce higher genera results, see \cite{pagani2}, \cite{paganihyper} and \cite{paganitommasi}.


%
\subsection{Description of the sections}

In Section $2$ we recall some known results that we will use and fix our notation. The complete, stack-theoretic description of the twisted sectors of \mun \ and \mbun \ is given in Section $3$, where we prove the first main result \ref{first}. This result allows us to compute the generating series of the orbifold Poincar\'e polynomials for \mbun. In Section $4$, we compute the Chen--Ruan cohomology of \mun \ and \mbun \ as a graded vector space. To do so, we introduce the unconventional rational grading on the cohomology of the inertia stack, usually referred to as \emph{age}, or \emph{degree shifting number}, or \emph{fermionic shift}. In Section $5$, we describe the twisted sectors of the second inertia stack of \mun \ and \mbun. Here a simplification occurs, indeed we show that every double twisted sector is canonically isomorphic to a twisted sector. In Section $6$ we begin the study of the orbifold intersection theory on \mbun, we compute all excess intersection bundles, and their top Chern classes. Finally, in Section $7$, we determine the Chen--Ruan cup product and we prove the second main result \ref{second}. In this section a proposal for an orbifold tautological ring for \mbun \ is motivated and advanced.

\section {Foundation}

\subsection{General notation}
We work in the category of schemes of finite type over $\mathbb{C}$. Although we treat only this case, some of our results can easily be extended to the case of an arbitrary field of characteristic different from $2$ and $3$. In the paper, algebraic stack means Deligne--Mumford stack. Intersection theory on schemes is defined in \cite{fulton}, on Deligne--Mumford stacks it is defined in \cite{vistoli-intersection}. We refer to these texts for the definitions and first properties of the Chow groups $A_*$. In this manuscript we work with cohomology and Chow ring with rational coefficients. Since all the spaces we consider are smooth, there is a standard identification of $A^*$ with the dual of $A_*$.  

 We use $\mathbb{G}_m$ to denote the group scheme of invertible multiplicative elements of $\mathbb{C}$. The discrete group subscheme of $\mathbb{G}_m$ of the $N$-th roots of $1$ is called $\mu_N$. The generators of $\mu_2$, $\mu_4$ and $\mu_6$ are
conventionally chosen to be respectively $-1, i$ and $\epsilon$. Since we work over the complex numbers, we can fix $\epsilon= e^{\frac{2 i \pi}{6}}$. We denote by $S_n$ the group of permutations of $[n]:= \{1,2,\ldots,n\}$.

If $G$ is a finite abelian group, $G^{\vee}=\Hom(G,\mathbb{C}^*)$ is the group of characters of $G$. We call the trivial $G$-gerbe over a point $BG=[ \spec (\mathbb{C} ) / G]$; it is the classifying stack of the group $G$. If $X$ is a scheme, an element in $\pic(X\times BG)$ is a pair $(L,\chi)$ where $L \in \pic(X)$ and $\chi \in G^{\vee}$.
\label{sezionegerbe} For a complete treatment on gerbes, we refer to the book \cite{giraud}, or to \cite{breen}. 

\subsection {Notation for \mgn \ and \mbgn, and some cohomological results}
\label{section2b}
 
 If $I$ is a finite set \mb{g}{I} \ is the moduli stack of stable genus $g$ curves with marked points in the set $I$. If $I=\{i, \bullet \}$ then we conventionally define $\overline{\mathcal{M}}_{0,I}$ as a point labeled by $i$. When the set $I$ is $[n]$, the set of the first $n$ natural numbers, we write \m{g}{n} \ and \mb{g}{n} \  instead of \m{g}{[n]} \ and \mb{g}{[n]}.

 If $I \subset J$, then $\pi_I:$ \m{g}{J} $\rightarrow$ \m{g}{I} \space is the morphism that remembers only the sections inside $I$. We give the same name to the morphism $\pi_I:$ \mb{g}{J} $\rightarrow$ \mb{g}{I} \space that forgets all the sections but the ones in $I$ and stabilizes. With this notation, let $s_i$ be the $i$-th section of $\pi_{[n]}:$ \mb{g}{n+1} $\rightarrow $   \mb{g}{n}. By definition, the \emph{cotangent line bundle} $\mathbb{L}_i$ is the line bundle $s_i^*(\omega_{\pi_{[n]}})$, where $\omega_{\pi_{[n]}}$ is the relative dualizing sheaf. We also define $\psi_i:=c_1(\mathbb{L}_i)$.

Let $k >0$ and let $(I_1,\ldots,I_k)$ be a partition of $[n]$. We define $j_{g,k}$ as the morphism gluing the marked points labeled with the same symbol:

\begin{displaymath}
j_{g,k}:\overline{\mathcal{M}}_{g, \coprod_{i=1}^k \bullet_i} \times \overline{\mathcal{M}}_{g_1,I_1 \sqcup \bullet_1} \times \ldots \times \overline{\mathcal{M}}_{g_k,I_k \sqcup \bullet_k} \to \overline{\mathcal{M}}_{g+\sum g_i,n},
\end{displaymath}
note that $j_{g,k}$ depends on the partition $I_1, \ldots, I_k$ and on the choice of $g_1, \ldots, g_k$, although we do not make this explicit in our notation.
We also define:
\begin{displaymath}
j: \overline{\mathcal{M}}_{g,n \sqcup \bullet_1 \sqcup \bullet_2} \to \overline{\mathcal{M}}_{g+1,n}
\end{displaymath}
as the morphism gluing together $\bullet_1$ and $\bullet_2$. 
In this paper, we will be dealing with the case of genus $1$ curves. We will be using several times the map $j_{1,k}$, where all the $g_i$ are set equal to $0$. We call this map simply $j_k$, so if $I_1, \ldots, I_k$ is a partition of $[n]$ we have the gluing map:
\begin{displaymath}
j_k:\overline{\mathcal{M}}_{1, \coprod_{i=1}^k \bullet_i} \times \overline{\mathcal{M}}_{0,I_1 \sqcup \bullet_1} \times \ldots \times \overline{\mathcal{M}}_{0,I_k \sqcup \bullet_k} \to \overline{\mathcal{M}}_{1,n}.
\end{displaymath}
The product space on the left admits projection maps onto each factor. We denote by $p$ the projection map onto the first factor $p$, and with $p_i$ the projection maps onto the genus $0$ component with marked points in the set $I_i$.

We recall the definition and main properties of the tautological ring for the moduli spaces of curves. 
\begin {definition} (\cite[Section 0.1]{faberpanda}) \label{tautologico}
The \emph{system of tautological rings} $R^*$(\mb{g}{n}$)$ is defined to be the set of smallest $\mathbb{Q}$-subalgebras of the Chow rings
\begin{displaymath}
R^*_{g,n}= R^*(\overline{\mathcal{M}}_{g,n}) \subset A^*(\overline{\mathcal{M}}_{g,n}, \mathbb{Q})
\end{displaymath}
that is closed under push--forward via all forgetful and gluing maps.
\end {definition}

\begin {remark} \label{nonloso} The system of tautological rings is closed under pull--back via the forgetful and the gluing maps. Each tautological ring is an $S_n$-representation via the action that permutes the points. We denote by $RH^*($\mb{g}{n}$)$ the image of $R^*($\mb{g}{n}$)$ under the cycle map to the ring of even cohomology classes. 
\end {remark}

\begin {definition} \label{boundary} We define $B^*_{g,n}$ to be the smallest system of vector subspaces of the Chow rings $A^*(\overline{\mathcal{M}}_{g,n}, \mathbb{Q})$ that contain the fundamental classes, and that are stable under push--forward via all gluing maps (see Definition \ref{tautologico}). A \emph{boundary strata class} is an element in $B^*_{g,n}$ that corresponds to a closed irreducible proper substack of $\overline{\mathcal{M}}_{g,n}$.
\end {definition}
\noindent Obviously, the tautological ring contains all boundary strata classes.

\begin{notation} \label{divisorigenere0e1} 
If $I\subset [n], |I|\geq2$, we denote by $d_I$ the closure of the substack of \mb{1}{n} of reducible nodal curves with two smooth components, where the marked points in the set $I$ are on the genus $0$ component and the marked points on the genus $1$ curve are in the complement. The closure of the substack of \mb{1}{n} of irreducible curves of geometric genus $0$ is called $d_{irr}$. We will sometimes indicate by $d_I$ also the class $[d_I] \in H^2($\mb{1}{n}$)$ represented by the divisor $d_I$. These elements form a basis for $B^1_{1,n}$.

Analogously, given $I \subset [n]$, such that $|I| \geq 2$ and $|[n] \setminus I| \geq 2$, $\Delta_{I}= \Delta_{[n] \setminus I}$ is the sublocus of $\overline{\mathcal{M}}_{0,n}$ whose general element has two genus $0$ components with marked points in $I$ in the first one and in $[n] \setminus I$ in the second one.  These elements generate $B^1_{0,n}$.
\end{notation}

\noindent In general, $B^*_{g,n} \subset R^*_{g,n} \subset A^*(\overline{\mathcal{M}}_{g,n}, \mathbb{Q}) \to H^*(\overline{\mathcal{M}}_{g,n}, \mathbb{Q})$ are all distinct. By \cite[p.2]{faberpanda}, the $\psi$-classes defined in Section \ref{section2b} are in the tautological ring. In genus $0$ we have the equalities \begin{equation} \label{skeel} B^*_{0,n}= R^*_{0,n}= A^*(\overline{\mathcal{M}}_{0,n}, \mathbb{Q})= H^{2*}(\overline{\mathcal{M}}_{0,n},\mathbb{Q});\end{equation} moreover the cohomology is generated by the boundary divisor classes (see \cite{keel}). In the remainder of this section, we will see what we can say for the cohomology of \mbun, in analogy with \eqref{skeel}. The next proposition is a straightforward consequence of Theorem $*$, \cite[Theorem 1.1]{grabervakil}. 
\begin {proposition} \label{conseteoremstar} The tautological ring $R^*($\mb{1}{n}$)$ is additively generated by boundary strata classes (see \ref{boundary}), so that $B^*_{1,n}=R^*_{1,n}$.
\end {proposition}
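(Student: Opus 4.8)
Since every boundary strata class is tautological, the inclusion $B^*_{1,n}\subseteq R^*_{1,n}$ is immediate, and I would establish the reverse inclusion by induction on $n$. For $n=1$ one has $A^{\ge 2}(\overline{\mathcal{M}}_{1,1})=0$ and $A^1(\overline{\mathcal{M}}_{1,1})=\mathbb{Q}$, spanned by $[D_{irr}]$ (equivalently by $\psi_1$, since $\psi_1=\frac{1}{12}[D_{irr}]$ there), so $R^*_{1,1}=A^*(\overline{\mathcal{M}}_{1,1})=B^*_{1,1}$. For $n\ge 2$ I would assume $R^*_{1,m}=B^*_{1,m}$ for every $m<n$; each such $B^*_{1,m}$ is then a genuine $\mathbb{Q}$-subalgebra of $A^*(\overline{\mathcal{M}}_{1,m})$ which is spanned by boundary strata classes. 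In genus $0$ we already have $B^*_{0,m}=A^*(\overline{\mathcal{M}}_{0,m},\mathbb{Q})$ for every $m$ by Keel \cite{keel}. Finally, recall that $R^*_{1,n}$ is additively generated over $\mathbb{Q}$ by the pushforwards, along the gluing maps, of monomials in the $\psi$- and $\kappa$-classes attached to the vertex factors of the stable graphs of genus $1$ with $n$ legs (cf. the discussion of the tautological ring in \cite{faberpanda}).

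So let a generator be attached to a stable graph $\Gamma$. If $\Gamma$ has no genus-$1$ vertex, then it carries a single loop and all its vertex factors have genus $0$, so by Keel each decorating monomial lies in the span of the boundary strata classes of the corresponding \mb{0}{\bullet}. If $\Gamma$ has its (unique) genus-$1$ vertex $v_0$ and at least one edge, then the genus-$1$ factor is \mb{1}{k} with $k\le n-1$, so by the inductive hypothesis its decorating monomial lies in $B^*_{1,k}$, a ring spanned by boundary strata classes, while the genus-$0$ factors are again handled by Keel. In either case the whole decorating class on $\overline{\mathcal{M}}_\Gamma$ is a $\mathbb{Q}$-linear combination of boundary strata classes of this product of moduli spaces, hence its pushforward along the gluing map lies in $B^*_{1,n}$ by definition of the latter. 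The only remaining case is the trivial graph: it suffices to prove that every monomial $M=\prod_i\psi_i^{a_i}\prod_j\kappa_j^{b_j}$ on $\overline{\mathcal{M}}_{1,n}$ itself belongs to $B^*_{1,n}$.

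For this I would use two facts. \emph{(i)} Each $\psi_i$ is a $\mathbb{Q}$-linear combination of boundary divisor classes: the iterated comparison formula along the forgetful map $\pi_{\{i\}}\colon\overline{\mathcal{M}}_{1,n}\to\overline{\mathcal{M}}_{1,1}$ gives $\psi_i=\pi_{\{i\}}^*\psi_1+\sum_{i\in S}c_S[D_S]$, and $\psi_1\in A^1(\overline{\mathcal{M}}_{1,1})=\mathbb{Q}\cdot[D_{irr}]$, so $\pi_{\{i\}}^*\psi_1$ is a rational multiple of $[D_{irr}]$. \emph{(ii)} For any boundary strata class $[Z]$ of positive codimension and any $\psi/\kappa$-monomial $Q$, the product $[Z]\cdot Q$ belongs to $B^*_{1,n}$: by the projection formula $[Z]\cdot Q=\iota_{Z*}(\iota_Z^*Q)$ with $\iota_Z$ the gluing map onto $Z$, and since $Z$ is a proper stratum the source of $\iota_Z$ is a product of spaces \mb{0}{\bullet} and at most one \mb{1}{k} with $k\le n-1$; the standard restriction formulas express $\iota_Z^*\psi_i$, $\iota_Z^*\kappa_j$ and $\iota_Z^*[D_{S'}]$ (including the self-intersection terms $-\psi_\bullet-\psi_{\bullet'}$) as $\psi$-, $\kappa$- and boundary classes on the factors, each lying in some $B^*_{1,m}$ with $m\le n-1$ by the inductive hypothesis, or in some $B^*_{0,\bullet}$ by Keel, so $\iota_Z^*Q$ is a $\mathbb{Q}$-linear combination of boundary strata classes of the source and its pushforward stays in $B^*_{1,n}$. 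Granting \emph{(i)} and \emph{(ii)}: if $\deg M=0$ then $M$ is the fundamental class; if $M$ contains some $\psi_i$, substitute it using \emph{(i)} and apply \emph{(ii)} to each resulting term; if $M$ is a product of $\kappa$-classes only (so $\deg M\ge 1$), pick a factor $\kappa_a$ with $a\ge 1$ and use the comparison $\kappa_a=\pi^*\kappa_a+\psi_n^a$ along the forgetful map $\pi\colon\overline{\mathcal{M}}_{1,n}\to\overline{\mathcal{M}}_{1,n-1}$: the inductive hypothesis gives $\kappa_a\in B^*_{1,n-1}$, the pullback of any boundary strata class along $\pi$ is again a $\mathbb{Q}$-linear combination of boundary strata classes (the preimage of a boundary stratum is a union of boundary strata), so $\pi^*\kappa_a\in B^*_{1,n}$ and involves only positive-codimension strata, hence $(\pi^*\kappa_a)\cdot(\text{rest of }M)\in B^*_{1,n}$ by \emph{(ii)}, while $\psi_n^a\cdot(\text{rest of }M)$ contains $\psi_n$ and so reduces to the previous case. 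This closes the induction.

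The serious point is not any individual computation but the bookkeeping imposed by the fact that $B^*_{1,n}$ is \emph{a priori} closed only under pushforward along the gluing maps, and not under products, pullbacks, or pushforward along the forgetful maps. Every operation that would naively step outside $B^*$ — a product $\psi_i\cdot M$, a pullback $\pi^*\kappa_a$, a self-intersection $[D]^2$ — must therefore be recast through the gluing maps, and what licenses this is exactly the inductive assumption that for fewer marked points $B^*_{1,m}$ coincides with the full tautological ring $R^*_{1,m}$, together with Keel's theorem in genus $0$ and the genus-$1$-specific identity $\psi_1=\frac{1}{12}[D_{irr}]$ on \mb{1}{1}. The remaining care goes into verifying the precise comparison and restriction formulas for $\psi$-, $\kappa$- and boundary classes under the forgetful and gluing maps, all of which are standard.
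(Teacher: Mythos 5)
Your argument is correct, but it reaches the statement by a genuinely different route from the paper, whose proof is a one-line appeal to Theorem $*$ of Graber--Vakil \cite{grabervakil}: in genus $1$ that theorem forces tautological classes of positive codimension to be supported on the boundary, and iterating this together with Keel's theorem on the genus-$0$ factors gives $B^*_{1,n}=R^*_{1,n}$ immediately, at the price of resting on relative virtual localization. You instead run a direct induction on $n$ starting from the standard additive generators of $R^*_{1,n}$ (pushforwards along gluing maps of $\psi$--$\kappa$ monomials decorating genus-$1$ stable graphs, cf.\ \cite{faberpanda} and the appendix of \cite{graberpanda}) and strip the decorations by hand: $\psi_i$ is a combination of boundary divisors because $\psi_1=\frac{1}{12}[D_{irr}]$ on $\overline{\mathcal{M}}_{1,1}$ plus the comparison formula along forgetful maps, a pure $\kappa$ monomial is broken via $\kappa_a=\pi^*\kappa_a+\psi_n^a$, and any product of a positive-codimension stratum class with a $\psi$--$\kappa$ monomial is pushed back through the gluing map of that stratum, whose genus-$1$ factor has at most $n-1$ markings (your count that $k\le n-1$ once there is an edge is the right one, since each subtree attached to the genus-$1$ vertex carries at least two legs) and is therefore covered by the inductive hypothesis, the genus-$0$ factors being covered by Keel \cite{keel}. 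The trade-off is clear: the paper's proof is shorter and is the one that generalizes to higher genus, while yours is elementary and effective, producing explicit boundary expressions in the spirit of Theorem \ref{basedivisori}; but note that its starting point --- that the minimal system of Definition \ref{tautologico} is additively spanned by decorated strata classes --- is itself a structural theorem from the cited literature, not part of the definition, so it should be invoked with a precise reference rather than as a recollection.
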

So let us now introduce some results concerning the tautological ring of \mbun. These results were originally claimed by Getzler in \cite{getzler1}, and they were recently proved by Petersen in \cite{petersen}.

\begin {theorem} \label {getzlerclaim} (claimed for the first time in \cite {getzler1}, proved in \cite{petersen}) The boundary strata classes span the even cohomology of \mbun. Moreover the cycle map is injective when restricted to the tautological algebra $R^*($\mbun$)$.
\end {theorem}
Note that the second sentence of the statement follows, in Petersen's proof, from the stronger fact that all relations among the generators of the even cohomology come from genus $0$ relations, and from Getzler's relation \cite[Theorem 1.8]{getzler1}, see \cite{petersen} for more details. The tautological ring of \mbun \ has been studied in detail by Belorousski \cite{belo}. When $n\leq 10$, the picture is similar to the genus $0$ case \eqref{skeel}.

\begin{proposition} \label{nminoredieci} (\cite[Theorem 3.1.1, Theorem 3.6.3]{belo}) For $n \leq 10$ the following two equalities also hold 
\begin{displaymath} R^*_{1,n}=A^*(\overline{\mathcal{M}}_{1,n}, \mathbb{Q})=H^*(\overline{\mathcal{M}}_{1,n}, \mathbb{Q}).\end{displaymath}
\end {proposition}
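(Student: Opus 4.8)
The plan is to establish each of the claimed equalities in turn, working from the most elementary toward the most substantive.

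First I would record the chain of containments that holds for all $n$, namely
$$
B^*_{1,n} \subseteq R^*_{1,n} \subseteq A^*(\overline{\mathcal{M}}_{1,n},\mathbb{Q}),
$$
together with the cycle class map $A^*(\overline{\mathcal{M}}_{1,n},\mathbb{Q}) \to H^{2*}(\overline{\mathcal{M}}_{1,n},\mathbb{Q})$. The first inclusion is definitional (the tautological ring contains all boundary strata classes, as remarked after Definition \ref{boundary}), and the preceding Proposition already gives $B^*_{1,n}=R^*_{1,n}$ for every $n$. So the real content is to show that, in the range $n \leq 10$, the tautological ring exhausts the Chow ring, that the cycle map to cohomology is an isomorphism, and that the cohomology is concentrated in even degrees (so that writing $H^*$ rather than $H^{2*}$ is harmless).

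The cleanest route is simply to invoke \cite[Theorem 3.1.1]{belo}: Bergström's computation of the (equivariant) cohomology of $\overline{\mathcal{M}}_{1,n}$ for $n \leq 10$ shows that all cohomology is algebraic, of Tate type, generated by boundary classes, and vanishes in odd degree, and that there is no room between cycles and cohomology because the Betti numbers match the ranks of the spaces $B^*_{1,n}$. Concretely I would: (i) cite that $H^{odd}(\overline{\mathcal{M}}_{1,n},\mathbb{Q})=0$ for $n \le 10$ and that $H^{2k}$ has Hodge type $(k,k)$, so the cycle map is surjective; (ii) cite that the boundary strata classes already span $H^{2*}$, giving surjectivity of the composite $B^*_{1,n} \to H^{2*}$; (iii) conclude that all inclusions in the chain above are equalities by a dimension count, since a surjection of finite-dimensional vector spaces sitting inside an injection forces everything to collapse. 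The oddness statement is what lets us drop the factor of $2$ in the grading and write $A^* = H^*$.

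The main obstacle is not in the logical assembly — which is a short diagram chase once the inputs are in hand — but in the fact that everything rests on the hard input of \cite{belo}, whose proof uses point counts over finite fields and the Lefschetz trace formula, and which genuinely fails for $n \geq 11$ (where $\overline{\mathcal{M}}_{1,11}$ acquires non-tautological, non-Tate cohomology coming from the weight-$11$ cusp form for $\mathrm{SL}_2(\mathbb{Z})$). Since we are permitted to assume results stated earlier, and the Proposition is phrased precisely as a citation of that theorem, the proof reduces to: invoke \cite[Theorem 3.1.1]{belo} for the two new equalities $R^*_{1,n}=A^*(\overline{\mathcal{M}}_{1,n},\mathbb{Q})=H^*(\overline{\mathcal{M}}_{1,n},\mathbb{Q})$, and combine with the previous Proposition for $B^*_{1,n}=R^*_{1,n}$.
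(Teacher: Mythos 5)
Your proposal is correct and takes essentially the same route as the paper: the paper offers no independent argument for this Proposition, simply quoting \cite[Theorem 3.1.1]{belo} and combining it with the preceding proposition ($B^*_{1,n}=R^*_{1,n}$), which is exactly the reduction you make. One factual correction: \cite{belo} is Belorousski's thesis on the Chow rings of pointed elliptic curves, not Bergstr\"om's point-counting computation, so your description of the underlying method (Lefschetz trace formula over finite fields) attributes the result to the wrong source, although this does not affect the validity of the citation-based argument.
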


\noindent It is well known that the eleventh cohomology group of $\overline{\mathcal{M}}_{1,11}$ is non-zero. It follows that the second and third equalities of the proposition above are no longer true for $n \geq 11$ (see for instance \cite[p.2]{graberpanda}).

Anyway, after Proposition \ref{conseteoremstar}, and Theorem \ref{getzlerclaim}, we obtain a decomposition of the cohomology in boundary strata classes and odd cohomology
\begin{equation} \label{decomposizione}
H^*(\overline{\mathcal{M}}_{1,n})= B^*_{1,n} \oplus H^{odd}(\overline{\mathcal{M}}_{1,n}).
\end{equation}
The rings $R^*_{1,n}=B^*_{1,n}$ are, in general, not multiplicatively  generated by the boundary divisors (as it happens in genus $0$), by a result of Belorousski.
\begin {theorem}\cite[Chapter 3]{belo} \label{belotheorem} The Chow ring of \mbun \ is generated by the divisors precisely when $n \leq 5$.
\end{theorem}
\noindent Anyway, it is possible to give a simple and geometric description of the additive generators of $R^*_{1,n}$ besides the product of boundary divisors. We denote by $R^{div}_{1,n}$ the subring of $R^*_{1,n}$ generated by the classes of the divisors. 
\begin{definition} The \emph{banana locus} (cf. \cite[p.49]{belo})  is the locus in \mbun \ of curves whose general element has two rational components joined in two nodes. A \emph{subbanana cycle} is a boundary strata class in \mbun \ that is contained in the banana locus. Let $R^{ban}_{1,n}$ be the vector subspace of $R^*_{1,n}$ generated by subbanana cycles. \label{bananacycle}
\end{definition}
\noindent Then we have\footnote{We learned this from Belorousski's thesis \cite{belo}, although it is not explicitly written there.}
\begin{equation} \label{sub}
R^*_{1,n}=B^*_{1,n}=R^{div}_{1,n} + R^{ban}_{1,n}.
\end{equation}
Indeed, let us consider the complement of the banana locus in \mbun. Reasoning by induction on the codimension, it is simple to see that the boundary strata classes on this complement can all be written as products of divisor classes.

\section {The Chen--Ruan cohomology of $\mathcal{M}_{1,n}$ and $\overline{\mathcal{M}}_{1,n}$ as vector spaces}
\subsection{Definition of Chen--Ruan cohomology as a vector space}

The following is a natural stack associated to a stack $X$, which points to where $X$ fails to be an algebraic space.

\begin{definition} \label{definertia} (\cite[Definition 1.12] {vistoli-intersection}) Let $X$ be an algebraic stack. The \emph{inertia stack} $I(X)$ of $X$ is defined as the fiber product $X \times_{X \times X} X$ where both morphisms $X \rightarrow X \times X$ are the diagonal morphisms. There is a natural map $f: I(X) \to X$.
\end {definition}

\noindent The construction of Chen--Ruan cohomology based on the definition of inertia orbifold was given for the first time in \cite[Definition 3.2.3]{chenruan}. As observed in \cite[Section 4.4]{agv1}, the latter is nothing but the coarse moduli space of the inertia stack we have just introduced. In \cite[7.3]{agv2} the algebraic counterpart of Chen--Ruan cohomology is introduced, under the name of stringy Chow ring. It is built on the rigidification of the cyclotomic inertia stack introduced in \cite[Section 3]{agv2}. In this paper we work over $\mathbb{C}$, and all cohomologies are taken with rational coefficients. Therefore, the cohomologies of the inertia stack, of the cyclotomic inertia stack, of the rigidified cyclotomic inertia stack and of the inertia orbifold are all canonically isomorphic, since all of them share the same coarse moduli space.

\begin{remark} \label{closedsubstack} If $Y$ is a twisted sector of $I(X)$, then the map $f$ of Definition \ref{definertia} restricts to a map $f_{|Y}:Y \to X$. In general $f_{|Y}:Y \to X$ is a composition of a stack covering and a closed embedding, as easily follows for instance from \cite[Lemma 1.13]{vistoli-intersection}, or from \cite[3.1.3]{chenruan}. In the present paper however, since all the stacks we consider are abelian orbifolds, the map $f_{|Y}$ is a closed embedding. So if $Y$ is a twisted sector, it can be written as $Y=(Z,g)$, where $Z$ is a closed substack of $X$ and $g$ is an automorphism in the generic stabilizer of $Z$. 
\end{remark}

\begin {definition} If $X$ is an algebraic stack, the connected component of the inertia stack associated with the identity automorphism is called the \emph{untwisted sector} of the inertia stack. All the remaining connected components are called the \emph{twisted sectors} of $I(X)$. The latter are sometimes called also the \emph{twisted sectors} of $X$.
\end{definition} 

\begin{proposition} \label{liscezza1} \cite[Corollary 3.1.4]{agv2} Let $X$ be a smooth algebraic stack. Then the inertia stack $I(X)$ is smooth.
\end{proposition}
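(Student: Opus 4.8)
The plan is to prove that the Inertia Stack $I(X)$ of a smooth Deligne--Mumford stack $X$ is smooth by exhibiting it, locally, as a fixed-point locus of a finite group acting on a smooth stack, and then invoking the fact that fixed loci of finite-group actions on smooth stacks (in characteristic zero, or more generally when the group order is invertible) are themselves smooth. First I would recall the description of $I(X)$ that follows directly from Definition \ref{definertia}: the fiber product $X \times_{X \times X} X$ along the two diagonals has, as its objects over a scheme $T$, pairs $(\xi, g)$ where $\xi$ is an object of $X(T)$ and $g \in \Aut_T(\xi)$; equivalently, $I(X)$ parametrizes pairs of an object together with one of its automorphisms. This is the moduli-theoretic content that makes the twisted sectors computable later in the paper.

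Next I would reduce to a local statement. Since $X$ is a Deligne--Mumford stack, it admits an étale atlas, and étale-locally $X$ is of the form $[U/G]$ for $U$ a smooth scheme (using smoothness of $X$) and $G$ a finite group acting on $U$. Because the formation of the Inertia Stack commutes with étale base change (it is defined by a fiber product, and $I([U/G]) \times_{[U/G]} U$ is computed from the groupoid presentation), it suffices to show $I([U/G])$ is smooth. One computes the groupoid presentation of $I([U/G])$ explicitly: its objects are pairs $(u, g)$ with $u \in U$ and $g \in G$ such that $g \cdot u = u$, i.e. $I([U/G]) = \left[\left(\coprod_{g \in G} U^g\right) \big/ G\right]$, where $U^g = \{u \in U : g u = u\}$ is the fixed subscheme of $g$ and $G$ acts by $h \cdot (u,g) = (hu, hgh^{-1})$.

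The key step is then that each fixed locus $U^g$ is smooth. This is a standard fact: if a finite group (here the cyclic group generated by $g$) acts on a smooth scheme $U$ over a field of characteristic zero — or characteristic prime to the group order, which is why the paper restricts to characteristic $\neq 2,3$ for stable genus-one curves whose automorphism groups are $2$- and $3$-groups — then the fixed subscheme is smooth. One proves this by passing to completed local rings at a fixed point, linearizing the action (Cartan's lemma: a finite-order automorphism of a complete local ring with residue characteristic zero is conjugate to its linear part on the cotangent space), and observing that the fixed locus of a linear action is a linear subspace, hence smooth; its dimension is the dimension of the invariant subspace of the tangent space. Since smoothness is étale-local on the source and the target and is preserved by taking quotient stacks by finite groups (the quotient map $\coprod_g U^g \to I([U/G])$ is étale surjective onto a stack whose source is smooth), we conclude that $I([U/G])$, and hence $I(X)$, is smooth.

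The main obstacle is the linearization / smoothness-of-fixed-loci step: one must be careful that this genuinely requires the order of the relevant automorphisms to be invertible on the base, which is precisely the hypothesis singled out in the paper's conventions; without it (e.g. a unipotent automorphism in positive characteristic) the fixed locus can fail to be smooth. Everything else — the moduli description of $I(X)$, compatibility with étale localization, and descent of smoothness along the atlas — is formal. For the purposes of this paper it is enough to cite \cite[Corollary 3.1.4]{agv2}, but the argument above is the content behind that citation.
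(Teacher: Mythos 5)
The paper gives no argument of its own here: Proposition \ref{liscezza1} is simply quoted from \cite[Corollary 3.1.4]{agv2}. Your sketch is the standard proof behind that citation, and its core is correct: the presentation $I([U/G]) \cong \bigl[\bigl(\coprod_{g \in G} U^{g}\bigr)/G\bigr]$ with $G$ acting by $h\cdot(u,g)=(hu,hgh^{-1})$, plus smoothness of the fixed locus $U^{g}$ of a finite-order automorphism of a smooth scheme in characteristic zero (linearization on completed local rings), plus descent of smoothness along the étale surjection $\coprod_g U^{g} \to I([U/G])$.

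One step, however, is stated too loosely. The formation of the Inertia Stack does \emph{not} commute with arbitrary étale representable base change: the atlas $\spec(\mathbb{C}) \to B\mu_n$ is already a counterexample, since $I(\spec \mathbb{C})=\spec \mathbb{C}$ while $I(B\mu_n)\times_{B\mu_n}\spec(\mathbb{C})\cong \mu_n$. So the reduction to charts of the form $[U/G]$ cannot be justified by the bare remark that $I(X)$ is a fiber product. The correct justification is the local structure theorem for separated Deligne--Mumford stacks: choosing an étale cover $\{M_i \to M\}$ of the coarse moduli space $M$ with $X\times_M M_i \cong [U_i/G_i]$, the charts $X\times_M M_i \to X$ are pulled back along morphisms of algebraic spaces, hence induce isomorphisms on automorphism groups, and for such (fixed-point reflecting) morphisms one does get $I(X\times_M M_i)\cong I(X)\times_X (X\times_M M_i)$; smoothness of $I(X)$ can then be checked on these charts, and the rest of your argument goes through. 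A minor side remark: the automorphism groups arising for stable $n$-pointed genus-$1$ curves are $\mu_2,\mu_3,\mu_4,\mu_6$, and $\mu_6$ is neither a $2$-group nor a $3$-group; what your linearization step actually needs is only that the orders, whose prime factors are $2$ and $3$, are invertible on the base, which is exactly the paper's running assumption.
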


\begin {definition}\label{decompo} Let $X$ be a smooth algebraic stack. Let $T$ be a set of indices in bijection with the twisted sectors of $I(X)$. We say that the equality
\begin{displaymath}
I(X)= X \sqcup \coprod_{i \in T} (X_i,g_i),
\end{displaymath}
\noindent is a \emph{decomposition of the inertia stack of $X$ in twisted sectors}. 
\end{definition}

\begin {notation} \label{notatio} In order to simplify the notation, if $(A,g)$, $(A,g')$ are two twisted sectors, we shall write $(A,g/g')$ to denote the disjoint union of the two twisted sectors $(A,g)$ and $(A,g')$ in the inertia stack. When we write $A$ we refer to the image of the closed embedding of the twisted sector inside the original stack $X$ (see Remark \ref{closedsubstack}).
\end{notation}
We can then define the Chen--Ruan cohomology vector space.
\begin {definition} \label{crvector} (\cite[Definition 3.2.3]{chenruan}) Let $X$ be a smooth algebraic stack. Then the Chen--Ruan cohomology is by definition
\begin{displaymath}H^*_{CR}(X,\mathbb{Q}):=H^*(I(X),\mathbb{Q})\end{displaymath}
as a rational vector space.
\end {definition}
\noindent The Chen--Ruan cohomology decomposes as in Definition \ref{decompo}
\begin{displaymath}
H^*_{CR}(X, \mathbb{Q})= H^*(X, \mathbb{Q}) \oplus \bigoplus_{i \in T} H^*(X_i, \mathbb{Q}).
\end{displaymath}

\subsection {The inertia stack of \mun \ and \mbun}

The twisted sectors in case $n=1$ are well known as a direct consequence of the Weierstrass Theorem. We refer to \cite[III.1]{silverman} for the basic material on this topic. First of all, recall that every curve of the form: \begin{displaymath}{C}_{a,b}= \{[x:y:z]| \ z y^2=x^3+a z^2x + bz^3, \ \Delta:=4a^3+27b^2 \neq 0 \} \subset \mathbb{P}^2 \end{displaymath}
is a smooth genus $1$ curve. If, instead: \begin{displaymath}{C}_{a,b}= \{[x:y:z]| \quad z y^2=x^3+a z^2x + bz^3, \ \Delta:=4a^3+27b^2 = 0, (a,b) \neq (0,0) \},\end{displaymath}
then ${C}_{a,b}$ is a nodal curve of arithmetic genus $1$, geometric genus $0$ and one node. All genus $1$ curves with a marked point admit this description.
 
\begin {theorem} \cite[III.1]{silverman} \label{weierstrass} (Weierstrass representation)  Let $(C,P)$ be an elliptic curve, possibly nodal. Then there exist $(a,b) \in \mathbb{C}^2$ such that $(C,P)$ is isomorphic to $(C_{a,b},[0:1:0])$, where $C_{a,b}$ is as above.
If $\alpha$ is an isomorphism of $(C,P)$ with $(D,Q)$ then there exists $\lambda \in \mathbb{G}_m$ such that, up to the isomorphism above, $\alpha$ is
\begin{displaymath}\alpha: \begin{cases} a \rightarrow \lambda^4 a\\
b \rightarrow \lambda^6 b\\
x \rightarrow \lambda^2 x\\
y \rightarrow \lambda^3 y \\
z \rightarrow z. \end{cases}\end{displaymath}
\end {theorem}
\noindent From this it follows that the moduli stack $\overline{\mathcal{M}}_{1,1}$ is isomorphic to the weighted projective stack $\mathbb{P}(4,6)$.

\begin{notation} \label{c4c6}
There are two elements of \mb{1}{1} that are stabilized by the action of a group respectively isomorphic to $\mu_4$ and $\mu_6$, we call them respectively $C_4$ and $C_6$. These are classes of curves whose Weierstrass representation can be chosen respectively as:
\begin{displaymath}
\mathcal{C}_4:=\left\{[x:y:z]|\quad y^2 z=x^3+x z^2 \right\} \subset \mathbb{P}^2,
\end{displaymath}
 \begin{displaymath}\mathcal{C}_6:=\left\{[x:y:z]|\quad y^2 z=x^3+z^3 \right\} \subset \mathbb{P}^2.\end{displaymath}
\end{notation}
 If $(C,P)$ is an elliptic curve, and $G$ is its automorphism group, then it can be identified canonically with $\mu_N$ for a certain $N \in \{2,4,6\}$.
 
\begin{notation} \label{canonical} If $(C,P)$ is an elliptic curve, and $G$ is its automorphism group, then $G$ acts effectively on $T^{\vee}_P(C)$, the cotangent space in $C$ to $P$, which is canonically isomorphic to $\mathbb{C}$. We identify $G$ with $\mu_N$ under this isomorphism.
\end{notation}

The decomposition of the inertia stack of \m{1}{1} and \mb{1}{1} in twisted sectors (Definition \ref{decompo}), is a simple way to summarize the well--known facts that we have exposed in this section.
 
\begin {corollary} \label{inertiam11} With the notation introduced in Notation \ref{notatio} and \ref{canonical}, the decomposition of the inertia stack of \m{1}{1} in twisted sectors is:
\begin{displaymath}I(\mathcal{M}_{1,1})=(\mathcal{M}_{1,1}, 1) \sqcup (\mathcal{M}_{1,1}, -1) \sqcup (C_4, i/-i) \sqcup (C_6, \epsilon/\epsilon^2/\epsilon^4/ \epsilon^5)\end{displaymath}
and that of \mb{1}{1} is:
\begin{displaymath}I(\overline{\mathcal{M}}_{1,1})=(\overline{\mathcal{M}}_{1,1}, 1) \sqcup (\overline{\mathcal{M}}_{1,1}, -1) \sqcup (C_4, i/-i) \sqcup (C_6, \epsilon/\epsilon^2/\epsilon^4/ \epsilon^5).\end{displaymath}
\end {corollary}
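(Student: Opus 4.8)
The plan is to reduce the statement to the classical description of automorphisms of elliptic curves (Theorem \ref{weierstrass}) together with the identification $\overline{\mathcal{M}}_{1,1}\cong\mathbb{P}(4,6)$. The Inertia Stack of a smooth Deligne--Mumford stack $X$ parametrizes pairs $(x,g)$ with $x$ a point of $X$ and $g$ an automorphism of $x$, so its connected components (beyond the untwisted sector) correspond to the loci where a fixed nontrivial automorphism persists. For $X=\overline{\mathcal{M}}_{1,1}$ an object is a nodal elliptic curve $(C,P)$, and by Theorem \ref{weierstrass} its automorphism group is cyclic of order $N\in\{2,4,6\}$, acting on the cotangent line $T^\vee_P(C)\cong\mathbb{C}$ as multiplication by $\lambda$ with $\lambda^N=1$; this is exactly the canonical identification with $\mu_N$ fixed in Notation \ref{canonical}.

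First I would observe that the automorphism $-1$ (equivalently $\lambda=-1$, the hyperelliptic involution) exists on \emph{every} nodal elliptic curve, since $N$ is always even; hence the locus of $(C,P)$ admitting $-1$ is all of $\overline{\mathcal{M}}_{1,1}$ (respectively $\mathcal{M}_{1,1}$), giving the sector $(\overline{\mathcal{M}}_{1,1},-1)$. Next I would identify the locus where an automorphism of order exactly $4$ appears: by the transformation law in Theorem \ref{weierstrass}, $\lambda=i$ acts by $a\mapsto \lambda^4 a=a$, $b\mapsto \lambda^6 b=-b$, so an order-$4$ automorphism exists precisely when $b=0$, i.e. on the single point $C_4$ with Weierstrass model $y^2z=x^3+xz^2$ (Notation \ref{c4c6}); this contributes the two sectors $(C_4,i)$ and $(C_4,-i)$ coming from the two primitive fourth roots of unity. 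Similarly $\lambda=\epsilon$ gives $a\mapsto\epsilon^4 a$, $b\mapsto\epsilon^6 b=b$, so an order-$6$ automorphism forces $a=0$, i.e. the single point $C_6$ with model $y^2z=x^3+z^3$, contributing the four sectors $(C_6,\epsilon/\epsilon^2/\epsilon^4/\epsilon^5)$ indexed by the primitive sixth and cube roots of unity. Finally, the untwisted sector $(\overline{\mathcal{M}}_{1,1},1)$ is tautologically a component, and I would note that $a$ and $b$ cannot vanish simultaneously on a nodal elliptic curve, so $C_4$ and $C_6$ are distinct and the case analysis on the order of $g$ is exhaustive, so these are all the sectors.

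The only genuinely substantive point is checking that each listed pair really is a \emph{single connected} twisted sector and that no two of them coincide or get glued together in $I(X)$ — equivalently, that the fixed loci computed above carry the reduced scheme structure claimed and that an automorphism of a given order does not spread to a larger locus. This follows from the explicit $\mathbb{G}_m$-weighted description: writing $\overline{\mathcal{M}}_{1,1}=[(\mathbb{A}^2\setminus 0)/\mathbb{G}_m]$ with weights $(4,6)$, the stabilizer of $(a,b)$ is $\{\lambda:\lambda^4 a=a,\ \lambda^6 b=b\}$, which is $\mu_6$ if $a=0$, $\mu_4$ if $b=0$, and $\mu_2$ otherwise, matching the stated decomposition exactly; the same computation works verbatim over $\mathcal{M}_{1,1}$ since passing to the open substack $\Delta\neq 0$ does not change which automorphisms the special points $C_4,C_6$ carry. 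I expect the main (modest) obstacle to be bookkeeping: being careful that the automorphism labels attached to $C_4$ and $C_6$ are precisely the \emph{primitive} fourth, sixth and third roots of unity (so that the order-$2$ automorphism on those points has already been counted in the big sector $(\overline{\mathcal{M}}_{1,1},-1)$, and is not double-counted), which is exactly the content of the notation $(C_4,i/-i)$ and $(C_6,\epsilon/\epsilon^2/\epsilon^4/\epsilon^5)$.
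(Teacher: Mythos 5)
Your proposal is correct and follows essentially the same route as the paper: the paper presents this corollary as a summary of the Weierstrass representation (Theorem \ref{weierstrass}) and the resulting identification $\overline{\mathcal{M}}_{1,1}\cong\mathbb{P}(4,6)$, which is exactly the stabilizer computation on $[(\mathbb{A}^2\setminus 0)/\mathbb{G}_m]$ with weights $(4,6)$ that you carry out. Your additional bookkeeping (that $b=0$ gives $\mu_4$, $a=0$ gives $\mu_6$, the generic and nodal points give $\mu_2$, and that the order-$2$ and identity automorphisms at $C_4$, $C_6$ are absorbed into the two big sectors) just makes explicit what the paper leaves as "well-known facts exposed in this section."
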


\subsubsection{The case of \mun}

We now study the inertia stack of \mun. Note that if $n >4$, the objects of \mun \ are rigid, and therefore in that range $I(\mathcal{M}_{1,n})=\mathcal{M}_{1,n}$. A simple analysis of the fixed points of the action of $\mu_3$, $\mu_4$ and $\mu_6$ on the curves $\mathcal{C}_4$ and $\mathcal{C}_6$ (see Notation \ref{c4c6}) by Theorem \ref{weierstrass} leads to three special points of \mbun, $n \leq 3$.

\begin {notation} \label{notsmooth} We call the point in \m{1}{2} stabilized by $i$ or $-i$ $C_4'$, the point in \m{1}{2} stabilized by $\epsilon^2$ or $\epsilon^4$ $C_6'$, and the point in \m{1}{3} stabilized by $\epsilon^2$ or $\epsilon^4$ $C_6''$.
\end {notation}

To complete the study of the loci fixed by automorphisms in \mun, we shall need the loci fixed by the elliptic involution (according to Notation \ref{canonical} we write it as $(-1)$). We give a special name to them.

\begin {definition} \label{aii} Let $1 \leq i \leq 4$. We define $A_i$ as the closed substack of \m{1}{i} whose objects $A_i(S)$ are $i$-marked smooth genus $1$ curves over $S$ such that the sections are stabilized by the elliptic involution.
\end {definition}

\noindent We shall see, as a consequence of Lemma \ref{aicompact}, that $A_i$ is connected for all $i$ (note that $A_1= \mathcal{M}_{1,1}$). What we have just discussed, leads to the following description:

\begin{corollary}\label{twistedsmooth} The decomposition of the inertia stack of \mun \ (Notation \ref{decompo}, \ref{notatio}, \ref{canonical}) is:
\begin{itemize}
\item $I(\mathcal{M}_{1,1})=  \mathcal{M}_{1,1} \sqcup (\mathcal{M}_{1,1}, -1) \sqcup (C_4,i/-i) \sqcup (C_6,\epsilon/ \epsilon^2/ \epsilon^4/ \epsilon^5)$;
\item $I(\mathcal{M}_{1,2})= \mathcal{M}_{1,2} \sqcup (A_2, -1) \sqcup (C_4',i/-i) \sqcup (C_6', \epsilon^2/\epsilon^4)$;
\item $I(\mathcal{M}_{1,3})=\mathcal{M}_{1,3} \sqcup (A_3, -1) \sqcup (C_6'', \epsilon^2/ \epsilon^4)$;
\item $I(\mathcal{M}_{1,4})=\mathcal{M}_{1,4} \sqcup (A_4, -1)$;
\item $I(\mathcal{M}_{1,n})= \mathcal{M}_{1,n}$ if $n \geq 5$.
\end{itemize}
\end{corollary}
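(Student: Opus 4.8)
The plan is to work out, case by case in $n$, which automorphisms of stable (here: smooth) $n$-pointed genus $1$ curves fix all the marked points, since by Definition \ref{definertia} the Inertia Stack $I(\mathcal{M}_{1,n})$ is, set-theoretically, the disjoint union over pairs $(C,g)$ with $g$ an automorphism of the pointed curve, and by Remark \ref{closedsubstack} each twisted sector is a closed substack $(Z,g)$ of $\mathcal{M}_{1,n}$ cut out by the condition that $g$ lies in the (generic) stabilizer. The starting point is Corollary \ref{inertiam11}, which handles $n=1$: the automorphism group of an elliptic curve is $\mu_2$ generically, jumping to $\mu_4$ at $C_4$ and to $\mu_6$ at $C_6$, and every nontrivial automorphism fixes the marked point (the origin), so all of $\mathcal{M}_{1,1}$, $C_4$, $C_6$ appear as twisted sectors with the stated generators. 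I would then treat $n=2,3,4$ by pulling this picture forward along the forgetful maps and imposing, for each nontrivial $g \in \mu_N$, the further conditions $g \cdot P_i = P_i$ on the additional marked points.

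For the inductive step the key observation is that via Theorem \ref{weierstrass} an automorphism of $(C,P)$ is multiplication by a root of unity $\lambda$ acting on $C$ as a group (translation is excluded since $P$ must be fixed), so on $C$ viewed as an elliptic curve it is the group automorphism $[\lambda]$; a second marked point $Q$ is fixed by $[\lambda]$ iff $Q$ is a fixed point of $[\lambda]$ on $C$. I would enumerate these fixed loci: the elliptic involution $[-1]$ fixes exactly the four $2$-torsion points, so imposing this on $1,2,3$ extra points cuts out the substacks $A_2, A_3, A_4$ of Definition \ref{aii} (and for $i=1$ one recovers $A_1 = \mathcal{M}_{1,1}$, all of it). An order-$4$ automorphism exists only on the curve $\mathcal{C}_4$, and $[i]$ has exactly two fixed points on it (the origin and one other $2$-torsion point), so it can fix at most one further marked point, giving the point $C_4' \in \mathcal{M}_{1,2}$ stabilized by $i/-i$ and nothing in $\mathcal{M}_{1,n}$ for $n \geq 3$; similarly the order-$3$ element $\epsilon^2$ on $\mathcal{C}_6$ has three fixed points (the $3$-torsion points, including the origin), so it fixes at most two further marked points, giving $C_6' \in \mathcal{M}_{1,2}$ and $C_6'' \in \mathcal{M}_{1,3}$ but nothing for $n\geq 4$; and $[\epsilon]$ of order $6$ has a single fixed point (the origin), hence contributes only to $n=1$. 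Finally, since the maximal automorphism group is $\mu_6$ and any nontrivial element other than $[-1]$ already fails to fix enough points once $n \geq 3$ (resp. $n \geq 4$ for the order-$3$ case), and $[-1]$ fails to fix five points in general position, for $n \geq 5$ the only automorphism fixing all marked points is the identity, so $I(\mathcal{M}_{1,n}) = \mathcal{M}_{1,n}$. Assembling these fixed-locus computations, together with the connectedness of the $A_i$ promised by Lemma \ref{aicompact}, yields exactly the five listed decompositions.

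The main obstacle I anticipate is not the enumeration of fixed points on a single curve — that is elementary torsion-point bookkeeping once one fixes the Weierstrass model — but rather making precise that these pointwise statements assemble into the correct \emph{stack-theoretic} decomposition: one must check that the loci $A_i$, $C_4'$, $C_6'$, $C_6''$ are genuinely closed substacks with $g$ in the generic stabilizer (so that $(Z,g)$ really is a single twisted sector and not a union of several, and that no component is being double-counted when $g$ and $g^{-1}$ give the same locus, as reflected in the $i/-i$ and $\epsilon^2/\epsilon^4$ notation), and that the forgetful map $\pi$ induces the claimed identification of $A_i$ with a substack of $\mathcal{M}_{1,i}$ rather than of $\mathcal{M}_{1,n}$. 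This is essentially the content deferred to Lemma \ref{aicompact} and Theorem \ref{twistedcompact}; at the level of this corollary I would phrase the argument as: the set-theoretic fixed-point analysis above exhausts all pairs $(C,g)$, and each resulting locus is smooth (Proposition \ref{liscezza1}) and connected (Lemma \ref{aicompact}), hence is a single twisted sector, which forces the displayed decomposition.
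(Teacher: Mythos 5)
Your proposal is correct and follows essentially the same route as the paper, which states this corollary as a summary of exactly the preceding discussion: the fixed-point analysis of the $\mu_4$- and $\mu_6$-actions on $\mathcal{C}_4$ and $\mathcal{C}_6$ via Theorem \ref{weierstrass} (yielding $C_4'$, $C_6'$, $C_6''$ and nothing for larger $n$), the definition of the $A_i$ as the loci fixed by the elliptic involution with connectedness deferred to Lemma \ref{aicompact}, and rigidity for $n\geq 5$. Only cosmetic imprecisions: the fixed locus of $\epsilon^2$ on $\mathcal{C}_6$ is the order-$3$ subgroup of the $3$-torsion (three of the nine $3$-torsion points), and for $n\geq 5$ the involution fails to fix any five \emph{distinct} points, not merely points in general position.
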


\noindent We collect the twisted sectors of \m{1}{n} in the following table. Different rows correspond to different automorphisms, while the $i$-th column corresponds to the twisted sectors of \m{1}{i}. 
\ \\
\ \\
{\centerline { \begin {tabular}{|c|c|c|c|c|}
\hline
&$1$&$2$&$3$&$4$ \\
\hline
$-1$ & $A_1$ & $A_2$ & $A_3$ & $A_4$ \\
\hline
$\epsilon^2/\epsilon^4$ & $C_6$ & $C_6'$& $C_6''$ & $\emptyset$ \\
\hline
$i/-i$ & $C_4$ & $C_4'$  & $\emptyset$ & $\emptyset$ \\
\hline
$\epsilon / \epsilon^5$ & $C_6$ & $\emptyset$ & $\emptyset$ & $\emptyset$ \\ 
\hline
\end {tabular}}
}

\ \\

 We now investigate the geometry of the spaces $A_i$ introduced in Definition \ref{aii}. In particular, this will give their cohomology.

\begin{remark} Using analytic methods (see \cite[Chapter 3]{diamond}), it is known that the coarse moduli space of $A_i$ is a genus $0$ quasiprojective curve, and also how many points are needed to compactify it. In the literature, the coarse moduli space for $A_2$ is known under the name of $X_1(2)=X_0(2)$. The coarse moduli space for $A_3\cong A_4$ is usually called $X(2)$. We here want to give an algebraic and stack-theoretic description of those spaces, that we could not find anywhere. 
\end{remark}
 
\begin {definition} We define $\overline{A_i}$ as the closure of $A_i$, in \mb{1}{i}.
\end {definition}

\noindent We have already observed that the stack $\overline{A_1}\cong \overline{\mathcal{M}}_{1,1}$ is isomorphic to $\mathbb{P}(4,6)$ as a consequence of Theorem \ref{weierstrass}. Following the same strategy that can be used to prove the latter isomorphism, we can obtain the following result:

\begin{lemma} \label{aicompact} The stack \auno \ is isomorphic to the weighted projective stack $\mathbb{P}(2,4)$. The stacks \adue \ and \atre \ are isomorphic to the weighted projective stack $\mathbb{P}(2,2)$.
\end{lemma}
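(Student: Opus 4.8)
The plan is to give, for $i=2$ and $i=3$, an explicit presentation of $\overline{A_i}$ as a weighted quotient stack $[(\mathbb{A}^2\setminus\{0\})/\mathbb{G}_m]$, imitating the Weierstrass-theoretic proof that \azerop\ is isomorphic to $\mathbb{P}(4,6)$ and simply enlarging the Weierstrass data $(a,b)$ by the $x$-coordinate(s) of the marked $2$-torsion point(s); the case $i=4$ will then be deduced from $i=3$. The starting observation is that the points of a genus $1$ curve in Weierstrass form $y^2z=x^3+az^2x+bz^3$ fixed by the elliptic involution $(-1)$ are exactly the origin $[0:1:0]$ and the points $(e:0:1)$ with $e$ a root of $x^3+ax+b$; so, by Definition \ref{aii}, an object of $A_2$ over a point is such a curve equipped with a distinguished root $e$ of its cubic, an object of $A_3$ one equipped with an ordered pair of roots, and an object of $A_4$ one equipped with an ordered triple, and by Theorem \ref{weierstrass} the isomorphisms $x\mapsto\lambda^2x$, $y\mapsto\lambda^3y$ act on this data by $a\mapsto\lambda^4a$, $b\mapsto\lambda^6b$, $e\mapsto\lambda^2e$.

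First I would pin down the open substacks $A_i$. Over a base $S$ the relative Weierstrass form is controlled, as for \mb{1}{1}, by the Hodge line bundle $\mathcal{L}$ (the conormal bundle of the origin section): an object of $A_2(S)$ amounts to $\mathcal{L}$ together with $A\in H^0(S,\mathcal{L}^{\otimes 4})$ and $E\in H^0(S,\mathcal{L}^{\otimes 2})$, the coefficient $b=-E^3-AE$ being forced by the requirement that $(E:0:1)$ lie on the curve, subject to the non-vanishing of the discriminant, which here factors as $\Delta=(3E^2+4A)(3E^2+A)^2$; conversely such data build a family through the relative Weierstrass embedding into $\mathbb{P}(\mathcal{O}\oplus\mathcal{L}^{\otimes 2}\oplus\mathcal{L}^{\otimes 3})$. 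Hence $A_2\cong[\{(A,E)\in\mathbb{A}^2 : \Delta\neq 0\}/\mathbb{G}_m]$ with weights $(4,2)$. In the same way, using $e_1+e_2+e_3=0$ and the symmetric functions $a=-(e_1^2+e_1e_2+e_2^2)$, $b=e_1e_2(e_1+e_2)$, one finds $A_3\cong[\{(E_1,E_2)\in\mathbb{A}^2 : \Delta\neq 0\}/\mathbb{G}_m]$ with weights $(2,2)$, the locus $\Delta=0$ being the three lines $\{E_1=E_2\}$, $\{2E_1+E_2=0\}$, $\{E_1+2E_2=0\}$ through the origin. Finally the map forgetting the last marked point identifies $A_4$ with $A_3$, the inverse adjoining the canonical section $(-E_1-E_2:0:1)$ cutting out the third $2$-torsion point.

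It then remains to extend these identifications over the boundary in \mb{1}{i}. Since the relevant components of $\{\Delta=0\}$ are $\mathbb{G}_m$-invariant and pass through the origin, the closure of $A_i$ inside the full weighted projective stack is all of $\mathbb{P}(4,2)$, resp.\ $\mathbb{P}(2,2)$, so it is enough to extend the universal family over the finitely many added points, check flatness, and check that the resulting morphism to \mb{1}{i} is a monomorphism; its source is then a proper irreducible stack whose image contains the dense open $A_i$, hence the morphism is a closed immersion onto $\overline{A_i}$ and so an isomorphism onto it. Over the component $\{3E^2+4A=0\}$ --- and, for \adue, over the part of each line where the marked $2$-torsion points stay distinct --- the Weierstrass curve is already stable and the family extends unchanged; over the locus where a marked $2$-torsion point runs into the node, i.e.\ $\{3E^2+A=0\}$ for \auno\ and the three lines for \adue, one blow-up of the total space performs the stable reduction, the limit being a cycle of two $\mathbb{P}^1$'s carrying the origin on one component and the colliding marked point(s) on the other. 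One also checks compatibility of stabilisers: the generic $\mu_2\subset\mathbb{G}_m$ forced by the even weights corresponds to the elliptic involution, the stabiliser for \auno\ jumps to $\mu_4$ exactly at $E=0$ (the curve $C_4'$), and for \adue\ it is constantly $\mu_2$ as it must be. Granting the extension step one reads off that \auno\ is isomorphic to $\mathbb{P}(4,2)\cong\mathbb{P}(2,4)$ and \adue\ to $\mathbb{P}(2,2)$, and then that \atre\ too is isomorphic to $\mathbb{P}(2,2)$ via the point-forgetting isomorphism of the previous paragraph, once it too is checked to extend over the boundary.

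The hard part is this boundary step. One must perform the stable reductions at the degenerate fibers explicitly and check that the modified family is flat over the entire weighted projective stack, and one must verify that the comparison morphism is a monomorphism there --- concretely, that the handful of stable limit curves are pairwise non-isomorphic and that the morphism is unramified, an injectivity statement on geometric points and on tangent spaces. Phrased in the spirit of the \mb{1}{1} proof, the crux is to know that an arbitrary family in $\overline{A_i}$, \emph{including} its nodal and reducible fibers, admits a canonical relative Weierstrass presentation carrying the distinguished section(s) that mark its $2$-torsion points; the existence of the Hodge line bundle over the boundary and the compatibility of these sections with the degenerations is where the genuine work lies, after which only the bookkeeping of weights carried out above remains.
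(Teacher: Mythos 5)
Your computational core coincides with the paper's: the parameters $(A,E)$, resp.\ $(E_1,E_2)$, with weights $(4,2)$ and $(2,2)$, the factorization of the discriminant, the need for a blow-up where a marked $2$-torsion section meets the node, the stabilizer bookkeeping, and the reduction of \atre\ to \adue\ by forgetting the determined fourth $2$-torsion point all match the paper. The problem is that the lemma is a statement about the \emph{compactified} stacks, and exactly there your argument stops: the extension of the universal family over the boundary, its flatness, the explicit stable reductions, and the monomorphism property of the comparison map are declared the ``hard part'' and then granted. As written this is a genuine gap rather than a routine verification: what you have actually established is only that $A_2$ and $A_3$ are open substacks of $\mathbb{P}(2,4)$ and $\mathbb{P}(2,2)$, which is not the assertion, and the deferred step (in particular producing a relative Weierstrass presentation, or a stable marked family, across the degenerate fibers) is precisely where the content of the lemma lies.

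It is worth seeing how the paper arranges the argument so that this step never arises. It does not extend a family across a boundary at all: starting from $\overline{\mathcal{M}}_{1,1}\cong[\mathbb{A}^2_0/\mathbb{G}_m]$ (weights $4,6$), it pulls back the universal curve $\overline{\mathcal{C}}_{1,1}\cong\overline{\mathcal{M}}_{1,2}$ along $\mathbb{A}^2_0\to\overline{\mathcal{M}}_{1,1}$ to get the Weierstrass family $B_1$ over the punctured $(a,b)$-plane with its nodal fibers present from the outset; the locus $\{y=0\}\subset B_1$ is a $\mathbb{G}_m$-invariant closed subscheme isomorphic to $\mathbb{A}^2_0$ with coordinates $(a,x)$ and weights $(4,2)$, so its quotient is automatically a closed substack of $[B_1/\mathbb{G}_m]\cong\overline{\mathcal{M}}_{1,2}$ containing $A_2$ densely, hence equal to $\overline{A_2}$. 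No Weierstrass theory over the boundary, no flatness or stable-reduction analysis, and no separate monomorphism check is needed; even the point where the marked $2$-torsion point runs into the node is handled for free, because the identification of the universal curve with $\overline{\mathcal{M}}_{1,2}$ itself performs the stabilization. For $\overline{A_3}$ the only genuine extra input is the single blow-up of $B_2$ along the locus where the second section meets the nodes, after which the same pullback argument with $\overline{\mathcal{C}}_{1,2}\cong\overline{\mathcal{M}}_{1,3}$ applies, restricted to the strict transform of $\{y=0\}$. If you recast your proof in this form, the step you postponed disappears; otherwise you must actually carry out the stable reductions, flatness, and monomorphism verifications you listed, since they are the substance of the statement about \auno, \adue\ and \atre.
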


\begin {proof} We first study the case of $\overline{A_2}$. Let us define the following space:
\begin{displaymath}B_1:= \left\{ \left((a,b),[x:y:z]\right) \ | \ (a,b) \neq (0,0), \ zy^2=x^3+a z^2 x+b z^3\right\} \subset \mathbb{A}^2_0 \times \mathbb{P}^2.\end{displaymath}
The projection onto the first factor, with the section $\sigma_1(a,b):= ((a,b),[0:1:0])$, describes this space as an elliptic fibration over $\mathbb{A}^2_0$, so it determines a unique map $\phi:\mathbb{A}^2_0 \to$ \mb{1}{1},  (here $\overline{\mathcal{C}}_{1,1}$ is the universal curve):
\begin{displaymath}
\xymatrix{B_1 \ar[r] \ar[d] & \overline{\mathcal{C}}_{1,1} \ar[d] \ar[r]^{\sim} & \overline{\mathcal{M}}_{1,2} \\ 
\mathbb{A}^2_0 \ar^{\phi}[r] \ar@/_1pc/[u]_{\sigma_1} & \overline{\mathcal{M}}_{1,1} \ar@/_1pc/[u]_{x_1}.&}
\end{displaymath}
making the diagram cartesian.
It is a well--known consequence of the Weierstrass theorem (\ref{weierstrass}) made in families that the map $\phi$ factors via the quotient $[\mathbb{A}^2_0 / \mathbb{G}_m]$, where $\mathbb{G}_m$ acts with weights $4$ and $6$, and that the resulting map $\tilde{\phi}: [\mathbb{A}^2_0 / \mathbb{G}_m] \to \overline{\mathcal{M}}_{1,1}$ is an isomorphism of stacks. The locus in $B_1$ cut out by the equation $y=0$ surjects onto $\overline{A_2} \subset \overline{\mathcal{M}}_{1,2}$. This locus is isomorphic to $\mathbb{A}^2_0$ with parameters $(a,x)$. Again as a consequence of the Weierstrass theorem, the action of $\mathbb{G}_m$ with weights $4$ and $2$ can be factored out, to obtain an isomorphism of stacks $[\mathbb{A}^2_0/\mathbb{G}_m] \to \overline{A_2}$. The forgetful map $\overline{A_2} \to \overline{A_1}$ lifts to the map of the charts $\mathbb{A}^2_0 \to \mathbb{A}^2_0$
\begin{equation} \label{forg1}
 (a,x) \to (a, - ax - x^3).
\end{equation}  

Now we study the case of $\overline{A_3}$. Let:
\begin{displaymath}B_2:= \left\{ \left((a,x_1),[x:y:z]\right) \ | \ (a,x_1) \neq (0,0), \ zy^2=x^3+a z^2 x+(-a x_1 -x_1^3) \ z^3\right\}.\end{displaymath}
In this case, the projection onto the first factor with the two sections \begin{displaymath} \begin{cases}\sigma_1(a,x_1)=((a,x_1),[0:1:0]) \\ \sigma_2(a,x_1)=((a,x_1),[x_1:0:1])\end{cases} \end{displaymath} does not give a map to \mb{1}{2} \ since the image of the second section intersects the singular locus. We define
\begin{displaymath}
\Lambda:= \left\{ ((a,x_1),[x:y:z]) \ | \ x=x_1, \ y=0, \ 4 a^3 + 27 (-a x_1 -x_1^3)^2 = 0\right\} \subset B_2.
\end{displaymath}
Let $p:\tilde{B_2} \to B_2$ be the blow-up of $B_2$ in $\Lambda$. Now the projection of $\tilde{B_2}$ onto $\mathbb{A}^2_0$ admits two distinct sections $\tilde{\sigma_1}$ and $\tilde{\sigma_2}$ that to not intersect the singular locus, and such that $p \circ \tilde{\sigma_i}=\sigma_i$. In this way, we obtain the cartesian diagram: 
\begin{displaymath}
\xymatrix{\tilde{B_2} \ar[r] \ar[d] & \overline{\mathcal{C}}_{1,2} \ar[d] \ar[r]^{\sim} & \overline{\mathcal{M}}_{1,3} \\ 
\mathbb{A}^2_0 \ar^{\psi}[r] \ar@/_1pc/[u]_{\tilde{\sigma_1}} \ar@/_2pc/[u]_{\tilde{\sigma_2}} & \overline{\mathcal{M}}_{1,2} \ar@/_1pc/[u]_{x_1}\ar@/_2pc/[u]_{x_2}.&}
\end{displaymath}
We denote by $\tilde{\psi}$ the smooth map $\tilde{B_2} \to \overline{\mathcal{M}}_{1,3}$.
Let $F$ be the locus in $B_2$ cut out by the equation $y=0$ and $\tilde{F}$ its strict transform under $p:\tilde{B_2} \to B_2$. The map $\tilde{\psi}$ restricted to $\tilde{F}$ surjects onto $\overline{A_3}$. There is an isomorphism $\lambda$ from $\mathbb{A}^2_0$ (parameters $(x_2,x_1)$) to $F$ \begin{displaymath}\lambda: (x_2,x_1) \to ((-x_1^2 -x_1 x_2 -x_2^2, x_1), [x_2:0:1]).\end{displaymath} Since $F$ is smooth, the restriction of the map $p: \tilde{F} \to F$ is an isomorphism, and therefore $\lambda$ lifts to an isomorphism $\tilde{\lambda}: \mathbb{A}^2_0 \to \tilde{F}$.  So we have a surjection:
 \begin{displaymath}\tilde{\psi} \circ \tilde{\lambda}: \mathbb{A}^2_0 \to \overline{A_3}.\end{displaymath}
Again, as a consequence of Weierstrass theorem, this map factors via the quotient $[\mathbb{A}^2/\mathbb{G}_m]$, where the action has weights $2$ and $2$, thus inducing an isomorphism of stacks $[\mathbb{A}^2_0/\mathbb{G}_m] \to \overline{A_3}$. The forgetful map $\overline{A_3} \to \overline{A_2}$ lifts to the map of the charts $\mathbb{A}^2_0 \to \mathbb{A}^2_0$
\begin{equation} \label{forg2}
 (x_1,x_2) \to (-x_1^2-x_1 x_2 -x_2^2, x_1).
\end{equation} 

To conclude the proof, we observe that the restriction of the forgetful map, $\overline{A_4} \to \overline{A_3}$, is an equivalence of categories. Indeed, when three among the four $2$-torsion points of an elliptic curve have been chosen, the fourth is uniquely determined.
\end{proof}

Note that as a consequence of the proof, we deduce from \eqref{forg1} and \eqref{forg2} a description of the forgetful maps $\overline{A_3} \to \overline{A_2} \to \overline{A_1}$ in terms of maps of weighted projective stacks. 

In Figure \ref{fig2} we show the two points in $\overline{A_2} \setminus A_2$, the three points in $\overline{A_3} \setminus A_3$ are in Figure \ref{Fig3}. The irreducible components of these curves are all rational: the geometric genus of each component is written at one extreme of the curve itself. The first marked point is not pictured and is at infinity. If coordinates are chosen on the vertical genus $0$ curve, in such a way that the two intersection points with the other component are $0, \infty$, the marked points ($2$ and) $2,3$ are (chosen among) the points with coordinates $1,-1$.

\begin{figure}[ht]
\centering \footnotesize
\psfrag{2}{$2$} 
\psfrag{0}{$0$}
\begin{tabular}{ccc}
\includegraphics[scale=0.15]{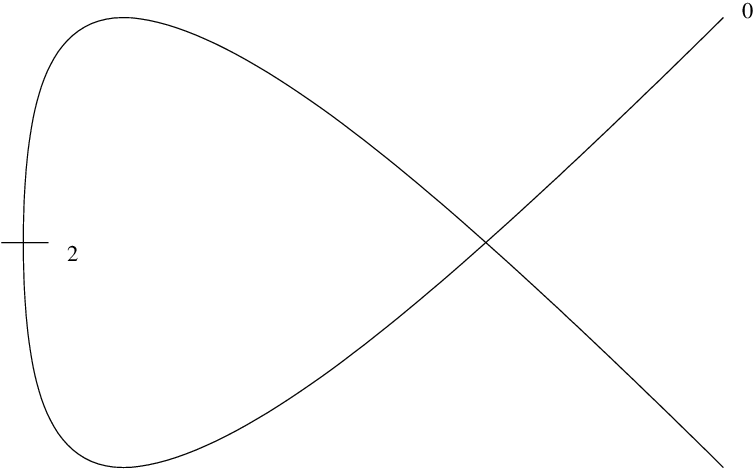} && \includegraphics[scale=0.25]{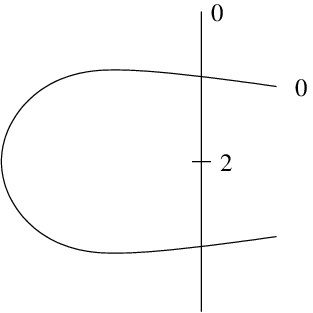} 
\end{tabular}
\caption{The two points that compactify $A_2$}
\label{fig2}
\end{figure}

\begin{figure}[ht]
\centering
\footnotesize
\psfrag{2}{$2$} 
\psfrag{0}{$0$}
\psfrag{3}{$3$}
\begin{tabular}{ccc}
\includegraphics[scale=0.30]{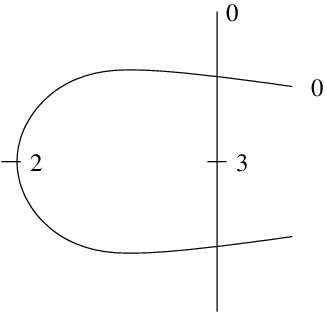}&
\includegraphics[scale=0.30]{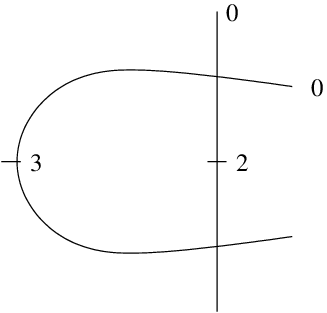}&
\includegraphics[scale=0.30]{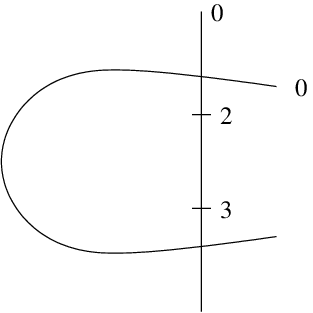}
\end{tabular}
\caption{The three points that compactify $A_3$}
\label{Fig3}
\end{figure}

We have thus described the spaces $A_i$ as open dense substacks of weighted projective stacks of dimension $1$. In particular, their rational cohomology follows from the following corollary.

\begin {corollary} The coarse moduli spaces of $\overline{A_i}$ is isomorphic to $\mathbb{P}^1$. If $i <4$, the coarse moduli spaces of $A_i$ is $\mathbb{P}^1$ minus $i$ points. The moduli stack $A_4$ coincides with $A_3$.
\end {corollary}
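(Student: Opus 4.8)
The plan is to read off everything from Lemma~\ref{aicompact} together with a direct analysis of the complements $\overline{A_i}\setminus A_i$. For the statement about the $\overline{A_i}$, recall that by Lemma~\ref{aicompact} each of $\overline{A_2},\overline{A_3},\overline{A_4}$ is a one--dimensional weighted projective stack $\mathbb{P}(a,b)$, while $\overline{A_1}\cong\overline{\mathcal{M}}_{1,1}\cong\mathbb{P}(4,6)$ by Theorem~\ref{weierstrass}. So I only need the standard fact that the coarse moduli space of $\mathbb{P}(a,b)=[(\mathbb{A}^2\setminus\{0\})/\mathbb{G}_m]$ is $\mathbb{P}^1$: it is $\mathrm{Proj}$ of $\mathbb{C}[u,v]$ with $\deg u=a$, $\deg v=b$, which is a smooth proper rational curve — for instance, setting $d=\gcd(a,b)$, the invariants $u^{b/d}$ and $v^{a/d}$ share the same degree and present it as $\mathbb{P}^1$ — hence isomorphic to $\mathbb{P}^1$.

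The equality $A_3=A_4$ is essentially contained in the last lines of the proof of Lemma~\ref{aicompact}: the forgetful morphism $\overline{A_4}\to\overline{A_3}$ is an equivalence of categories because, once three of the four two--torsion points of a genus $1$ curve are prescribed, the fourth is forced; this equivalence preserves smoothness of the underlying curve, so it restricts to an isomorphism $A_4\to A_3$.

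It then remains to count the points of $\overline{A_i}\setminus A_i$ for $i=1,2,3$, and this is the only step that takes effort. For $i=1$ it is classical that $\overline{\mathcal{M}}_{1,1}\setminus\mathcal{M}_{1,1}$ is the single point given by the nodal cubic, i.e. the locus $\Delta=0$. For $i=2,3$ I would use the explicit affine models constructed in the proof of Lemma~\ref{aicompact}: on $\overline{A_2}=[\mathbb{A}^2_0/\mathbb{G}_m]$ with coordinates $(a,x)$ of weights $4$ and $2$, the open substack $A_2$ is the non--vanishing locus of the weighted--homogeneous discriminant, which after substituting $b=-(x^3+ax)$ (the second marked point being the two--torsion point $[x:0:1]$) becomes $\Delta=4a^{3}+27(x^{3}+ax)^{2}$; factoring this form and counting its distinct zeros on the coarse $\mathbb{P}^1$ yields exactly the two points pictured in Figure~\ref{fig2}. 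The analogous computation on the model $[\mathbb{A}^2/\mathbb{G}_m]$ of $\overline{A_3}$ with weights $2$ and $2$ yields the three points of Figure~\ref{Fig3}. Since $A_i$ is open and dense in $\overline{A_i}$, passing to coarse spaces turns $A_i\hookrightarrow\overline{A_i}$ into an open immersion of $\mathbb{P}^1$ missing precisely those $i$ points, which is the claim.

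I expect the last step to be the main obstacle: one has to verify that the discriminant locus really consists of $i$ distinct points — no accidental collisions and no spurious components — which amounts to bookkeeping with the weighted--homogeneous discriminant in each model. This can be cross-checked against the classical count recalled in the remark before Lemma~\ref{aicompact}, namely that $\Gamma_0(2)$ has two cusps and $\Gamma(2)$ has three.
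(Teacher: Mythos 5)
Your proposal is correct and follows essentially the same route as the paper: the paper treats this corollary as an immediate consequence of Lemma \ref{aicompact} (weighted projective stacks have coarse space $\mathbb{P}^1$, and $\overline{A_4}\to\overline{A_3}$ is an equivalence), together with the enumeration of the boundary points of $\overline{A_2}$ and $\overline{A_3}$ displayed in Figures \ref{fig2} and \ref{Fig3}. The only difference is that you make the point count explicit via the weighted-homogeneous discriminant in the affine models, where the paper simply exhibits the boundary curves; your computation (two zero loci $a=-3x_1^2$, $a=-\tfrac{3}{4}x_1^2$ for $\overline{A_2}$, three lines for $\overline{A_3}$) checks out and agrees with the cusp counts for $\Gamma_0(2)$ and $\Gamma(2)$.
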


\subsubsection{The case of $\overline{\mathcal{M}}_{1,n}$}

Up to now, we have determined the decomposition in twisted sectors of the inertia stack (\ref{decompo}) of $\mathcal{M}_{1,n}$ (Corollary \ref{twistedsmooth}). As we have observed in Remark \ref{closedsubstack}, each twisted sector of \mb{1}{n} admits a closed embedding inside \mb{1}{n} itself. So, if $(Z,g)$ is a twisted sector of \m{1}{n}, we can consider the Deligne--Mumford compactification $\overline{Z} \subset$ \mb{1}{n}. It is easily seen that $(\overline{Z},g)$ is a twisted sector of \mb{1}{n}. In the last section, we have studied all such twisted sectors $(\overline{Z},g)$. We will see in this section that there are further twisted sectors of \mb{1}{n}, whose image inside \mb{1}{n} is completely contained in the boundary.

Let $(Z,g)$ be a twisted sector of \m{1}{k}. The twisted sector $(\overline{Z},g)$ has $k$ marked points that we rename $\bullet_i$, where $1 \leq i \leq k$. Let now $(I_1,\ldots,I_k)$ be a partition of $[n]$, such that $I_i\neq \emptyset$. Let now $j_k$ be the morphism gluing together the same symbols (defined in Section \ref{section2b}):

\begin {definition} \label{base} Let $Z$ be a twisted sector inside \m{1}{k}. We say $\overline{Z}$ is a \emph{base twisted sector}. We define $\overline{Z}^{(I_1,\ldots,I_k)}$ as:

\begin{displaymath}
\overline{Z}^{(I_1,\ldots,I_k)}:= j_k (\overline{Z} \times \overline{\mathcal{M}}_{0,I_1 \sqcup \bullet_1} \times \ldots \times \overline{\mathcal{M}}_{0,I_k \sqcup \bullet_k}).
\end{displaymath}
We then say $\overline{Z}$ is the \emph{base twisted sector} associated with $\overline{Z}^{(I_1,\ldots,I_k)}$.
\end {definition}

\begin {theorem} \label{partialtwistedcompact} If $(Z, \alpha)$ is a twisted sector in \m{1}{k}, and $(I_1,\ldots,I_k)$ is a partition of $[n]$, $(\overline{Z}^{(I_1,\ldots,I_k)},\alpha)$ is a twisted sector of the inertia stack of \mbun.
\end {theorem}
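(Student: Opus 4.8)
The plan is to reduce the statement to an explicit identification of the locus $\overline{Z}^{(I_1,\ldots,I_k)}$ with a connected component of the Inertia Stack of $\overline{\mathcal{M}}_{1,n}$, by producing the automorphism $\alpha$ in a controlled way and checking that the pair $(\overline{Z}^{(I_1,\ldots,I_k)},\alpha)$ satisfies Definition \ref{decompo}. First I would set up notation: a generic point of $\overline{Z}^{(I_1,\ldots,I_k)}$ corresponds to a nodal genus $1$ curve $C$ obtained by gluing a genus $1$ curve $C_0$ (a point of $\overline{Z}$, with its $k$ marked points $\bullet_1,\ldots,\bullet_k$) to $k$ rational tails $R_i$ along the points $\bullet_i$, where $R_i$ carries the marked points in $I_i$. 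Since $(Z,\alpha)$ is a twisted sector in $\mathcal{M}_{1,k}$, the automorphism $\alpha$ fixes $C_0$ with all its marked points; by Notation \ref{canonical} it is, up to conjugation, one of $-1,i,\epsilon^2,\ldots$, and in particular $\alpha$ fixes each $\bullet_i$. The key observation is that $\alpha$ then extends uniquely to an automorphism $\tilde\alpha$ of the glued curve $C$ that acts as the identity on every rational tail $R_i$: this is forced because an automorphism of a stable rational curve with at least three special points (the node $\bullet_i$ plus the marked points of $I_i$, which number $|I_i|+1\ge$... — here one uses that $\overline{\mathcal{M}}_{0,I_i\sqcup\bullet_i}$ is a moduli space, so $|I_i\sqcup\bullet_i|\ge 3$) is determined by its action on those special points, and it must fix the node while fixing the marked points. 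Thus $\tilde\alpha$ is the identity on the tails and equals $\alpha$ on $C_0$, and this assignment is evidently compatible in families.

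Next I would argue that the map sending a point of $\overline{Z}\times\prod_i\overline{\mathcal{M}}_{0,I_i\sqcup\bullet_i}$ to the pair $(\text{glued curve},\tilde\alpha)$ lands in $I(\overline{\mathcal{M}}_{1,n})$ and identifies the source, via $j_k$, with a closed substack of the Inertia Stack. Since $j_k$ is a closed embedding (the stacks are abelian, cf.\ Remark \ref{closedsubstack}) and the component of $I(\overline{\mathcal{M}}_{1,n})$ over a point $[C]$ is $\coprod_{g\in\Aut(C)}\{(C,g)\}$, it suffices to check two things: (i) that $\tilde\alpha$ lies in the generic stabilizer of $\overline{Z}^{(I_1,\ldots,I_k)}$ — clear by construction — and (ii) that the locus of curves in $\overline{\mathcal{M}}_{1,n}$ admitting an automorphism of the prescribed "type" and fixing all marked points, within this boundary stratum, is exactly the image of $j_k$ restricted to $\overline{Z}\times\prod_i\overline{\mathcal{M}}_{0,I_i\sqcup\bullet_i}$. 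Here I would invoke the classification already obtained: any automorphism of a stable $n$-pointed genus $1$ curve must act trivially on all genus $0$ components carrying marked points (same three-special-points argument), hence is supported on the "genus $1$ core", and restricting to the core recovers a point of some twisted sector $Z$ of $\mathcal{M}_{1,k}$ in the sense of Corollary \ref{twistedsmooth}. This shows $(\overline{Z}^{(I_1,\ldots,I_k)},\tilde\alpha)$ is open and closed in $I(\overline{\mathcal{M}}_{1,n})$, i.e.\ a union of connected components; since $\overline{Z}$ is connected (by Lemma \ref{aicompact} and the discussion of the $C'$, $C''$ points) and the $\overline{\mathcal{M}}_{0,I_i\sqcup\bullet_i}$ are connected, it is a single twisted sector.

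The main obstacle I anticipate is the careful bookkeeping at the \emph{boundary} of $\overline{Z}^{(I_1,\ldots,I_k)}$ — that is, making sure the uniqueness-of-$\tilde\alpha$ and the "automorphisms are supported on the core" arguments survive further degeneration, when the genus $1$ core itself degenerates (e.g.\ to $D_{irr}$ or to a cycle of rational curves) and when the rational tails degenerate further. One must check that no \emph{new} automorphisms appear generically along such boundary loci that would enlarge the stabilizer, and that the extension $\tilde\alpha$ remains the identity on all the newly-created rational components; again this follows from the three-special-points principle applied componentwise, but it requires a genuine (if routine) case analysis of the dual graphs, and an appeal to smoothness of $I(\overline{\mathcal{M}}_{1,n})$ (Proposition \ref{liscezza1}) to conclude that the closure of the generic locus is exactly the closed substack $j_k(\overline{Z}\times\prod_i\overline{\mathcal{M}}_{0,I_i\sqcup\bullet_i})$ with no embedded or extra components. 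Once this is in place, $(\overline{Z}^{(I_1,\ldots,I_k)},\alpha)$ being a twisted sector of $I(\overline{\mathcal{M}}_{1,n})$ is immediate.
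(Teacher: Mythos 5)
Your overall strategy coincides with the paper's (lift $\alpha$ to act as $\alpha$ on the core and as the identity on the rational tails, then check that the glued locus is a connected component of $I(\overline{\mathcal{M}}_{1,n})$), but the way you propose to carry out the second step contains a genuine error. The principle you invoke twice --- that an automorphism of a stable pointed genus-$1$ curve must act trivially on every genus-$0$ component carrying marked points, and that the extension ``remains the identity on all the newly-created rational components'' when the genus-$1$ core degenerates --- is false. It holds only for rational \emph{tails} (trees meeting the rest of the curve in a single node); for components lying on the cycle of the dual graph it fails, and it fails exactly at the boundary points you need to control: at the points of $\overline{A_2}\setminus A_2$ and $\overline{A_3}\setminus A_3$ (Figures \ref{fig2} and \ref{Fig3}) the limit of the elliptic involution fixes the marked points at coordinates $\pm 1$ on a rational component of the core while exchanging its two nodes, so it is neither the identity on a marked genus-$0$ component nor ``supported on the genus-$1$ core'' in your sense. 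Hence the three-special-points argument cannot deliver your step (ii) nor the boundary bookkeeping of your last paragraph. (Those degenerate cores do lie in the image, but because they already belong to the twisted sector $(\overline{Z},\alpha)$ of $\overline{\mathcal{M}}_{1,k}$, where the automorphism comes as part of the data --- not because it is an identity-on-new-components extension.)

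There is a second, related gap: checking (ii) only ``within this boundary stratum'' does not give openness of the image in $I(\overline{\mathcal{M}}_{1,n})$, and the closing appeal to smoothness does not repair this, because one must also exclude that the curve together with its automorphism deforms \emph{out} of the image, i.e.\ that some node-smoothing direction is $\alpha$-invariant. The clean way to finish along your lines is a tangent-space computation at a generic point of the image: $\alpha$ acts on the smoothing parameter of the node at $\bullet_i$ by its eigenvalue on the tangent line to the core at $\bullet_i$ (nontrivial, since a nontrivial finite-order automorphism of a smooth curve acts nontrivially on the tangent line at an isolated fixed point) times $1$ on the tail side, so no smoothing direction is fixed; therefore the $\alpha$-fixed tangent space has dimension $\dim\overline{Z}+\sum_i\dim\overline{\mathcal{M}}_{0,I_i\sqcup\bullet_i}$, and since $I(\overline{\mathcal{M}}_{1,n})$ is smooth (Proposition \ref{liscezza1}) the closed connected image is an entire component. (The paper instead disposes of ``no extra components'' by the induction along the universal curve used in Theorem \ref{twistedcompact}.) A minor additional point: $|I_i|=1$ is allowed, handled by the convention that $\overline{\mathcal{M}}_{0,\{i,\bullet\}}$ is a point, so your assertion that $|I_i\sqcup\bullet_i|\ge 3$ needs that caveat, though in that case there is no tail and nothing to check.
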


\begin {proof} The automorphism $\alpha$ lifts to an automorphism $\alpha'$ of $\overline{Z}^{(I_1,\ldots,I_k)}$ that acts as $\alpha$ on the base and as the identity on the components \mb{0}{n}. We can call with the same name $\alpha$ and $\alpha'$, and represent them by the same element in $\mu_N$ (see Notation \ref{canonical}). It is easy to check that $\overline{Z}^{(I_1,\ldots,I_k)}$ is a connected component of the inertia stack of \mb{1}{n}.
\end {proof}

\begin {notation} \label{doppianota} Let $\sigma \in S_k$. Then $\overline{Z}^{(I_1,\ldots,I_k)}=\overline{Z}^{(I_{\sigma(1),\ldots,\sigma(k)})}$. The twisted sector is identified up to isomorphism by $Z$ and the partition $\{I_1, \ldots I_k \}$ where the ordering of the $I_i$s does not matter. From now on we will simply denote this twisted sector in \mbun \ by $\overline{Z}^{\{I_1,\ldots,I_k\}}$ or $\overline{Z}^{\{I_1,\ldots,I_k\}}$: the set of parameters for the twisted sectors whose base space is $Z$ is the set of the $k$ partitions of $[n]$.
Note also that $\overline{Z}$ is identified with $\overline{Z}^{\{1\},\ldots,\{k\} }$ for every twisted sector $Z$ of \mb{1}{k}.
\end{notation}

With the notation just introduced, we state the main result of this section.

\begin {theorem} \label{twistedcompact} The decomposition of $I($\mbun$)$ in twisted sectors is (see Notation \ref{decompo}, \ref{notatio}, \ref{canonical}):
\begin{displaymath}\left( \overline{\mathcal{M}}_{1,n},1 \right) \sqcup \left(\overline{A_1}^{[n]}, -1\right) \sqcup \left(\overline{A_2}^{I_1,I_2}, -1\right) \space \sqcup\left(\overline{A_3}^{I_1,I_2,I_3}, -1\right) \end{displaymath} \begin{displaymath}  \sqcup \left(\overline{A_4}^{I_1,I_2,I_3,I_4}, -1\right) \sqcup  \left(C_4^{[n]}, i/-i\right) \sqcup \left(C_4^{I_1,I_2}, i/-i \right) 
\end{displaymath} \begin{displaymath}\sqcup  \left(C_6^{I_1,I_2}, \epsilon^2 / \epsilon^4\right)  \sqcup \left(C_6^{I_1,I_2,I_3}, \epsilon^2/\epsilon^4\right)
 \sqcup \left(C_6^{[n]}, \epsilon/ \epsilon^2 /\epsilon^4/ \epsilon^5\right),
\end{displaymath}
where each disjoint union is taken over the set of all possible decompositions of the set $[n]$ in $1,2,3$ or $4$ non-empty subsets: $[n]= \coprod I_i$.
\end {theorem}
\begin {proof} We have just seen in Theorem \ref{partialtwistedcompact} that all the components that appear in the decomposition are twisted sectors of \mb{1}{n}. We have to prove that there are no further connected components in the inertia stack of \mb{1}{n}. To see that there are no further twisted sectors, one can work by induction using the fact that
\begin{displaymath}
\pi_{[n]}: \overline{\mathcal{M}}_{1,n+1} \rightarrow \overline{\mathcal{M}}_{1,n} 
\end{displaymath}
is the universal curve.  
\end {proof}

\noindent From this, we obtain the following corollary that describes all the possible twisted sectors of \mbun \ stack-theoretically.
\begin {corollary} \label{fondamentale} Each twisted sector of $\overline{\mathcal{M}}_{1,n}$ is isomorphic to a product
\begin{displaymath}
A \times \overline{\mathcal{M}}_{0,n_1} \times \overline{\mathcal{M}}_{0,n_2} \times \overline{\mathcal{M}}_{0,n_3} \times \overline{\mathcal{M}}_{0,n_4},
\end{displaymath}
where $n_1,\ldots,n_4 \geq 3$ are integers and $A$ is in the set
\begin{displaymath}
\left\{B \mu_3, B \mu_4, B \mu_6, \mathbb{P}(4,6), \mathbb{P}(2,4), \mathbb{P}(2,2)\right\}.
\end{displaymath}

\end {corollary}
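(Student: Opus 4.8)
The plan is to read the answer off the classification of twisted sectors already obtained in Theorem~\ref{twistedcompact}, using the explicit descriptions of the base twisted sectors from Lemma~\ref{aicompact} and Notations~\ref{c4c6} and~\ref{notsmooth}. By Theorem~\ref{twistedcompact}, every twisted sector of $\overline{\mathcal{M}}_{1,n}$ has the form $(\overline{Z}^{I_1,\ldots,I_k},\alpha)$, where $k\in\{1,2,3,4\}$, $\{I_1,\ldots,I_k\}$ is a partition of $[n]$ into nonempty subsets, and the base twisted sector $\overline{Z}$ ranges over the finite list $\overline{A_1}=\overline{\mathcal{M}}_{1,1}$, $\overline{A_2}$, $\overline{A_3}$, $\overline{A_4}$, $C_4$, $C_4'$, $C_6$, $C_6'$, $C_6''$ (carrying the automorphisms recorded there). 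So it suffices to (i) identify $\overline{Z}^{I_1,\ldots,I_k}$, as a stack, with a product of $\overline{Z}$ and genus $0$ moduli spaces, and (ii) identify each base $\overline{Z}$ with one of the six stacks in the asserted list.

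For step (i), recall from Definition~\ref{base} that $\overline{Z}^{(I_1,\ldots,I_k)}$ is the image of the clutching map
$$
j_k\colon \overline{Z}\times\overline{\mathcal{M}}_{0,I_1\sqcup\bullet_1}\times\cdots\times\overline{\mathcal{M}}_{0,I_k\sqcup\bullet_k}\longrightarrow\overline{\mathcal{M}}_{1,n},
$$
and that by Remark~\ref{closedsubstack} this image is a closed substack. I would check that $j_k$ restricted to this product is a closed immersion: the decorated stable graph occurring here has a single intrinsic genus $1$ vertex carrying the $k$ attaching half-edges (together with whatever marked points $\overline{Z}$ remembers), with $k$ genus $0$ tails, the $j$-th tail carrying the labelled legs $I_j$; since the legs $1,\dots,n$ are globally ordered, this graph has no nontrivial automorphisms, so $j_k$ is injective, and it is unramified because first order deformations of a glued curve split as deformations of the components plus smoothings of the $k$ nodes. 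Hence $j_k$ induces an isomorphism onto $\overline{Z}^{I_1,\ldots,I_k}$; moreover, as already noted in the proof of Theorem~\ref{partialtwistedcompact}, the remembered automorphism $\alpha$ acts as $\alpha$ on the $\overline{Z}$ factor and as the identity on the genus $0$ factors. Therefore, as a stack,
$$
\overline{Z}^{I_1,\ldots,I_k}\;\cong\;\overline{Z}\times\prod_{j=1}^k\overline{\mathcal{M}}_{0,I_j\sqcup\bullet_j}.
$$
With the convention of Section~\ref{section2b} that $\overline{\mathcal{M}}_{0,\{i,\bullet\}}$ is a point, the factors with $|I_j|=1$ are trivial and may be dropped; since $k\le 4$, we are left with $\overline{Z}$ times at most four genus $0$ factors, each with at least three marked points, which is the shape in the statement.

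For step (ii), $\overline{A_1}=\overline{\mathcal{M}}_{1,1}\cong\mathbb{P}(4,6)$ by Theorem~\ref{weierstrass}, and $\overline{A_2}\cong\mathbb{P}(2,4)$, $\overline{A_3}\cong\overline{A_4}\cong\mathbb{P}(2,2)$ by Lemma~\ref{aicompact}. For the zero-dimensional base sectors I would compute the automorphism groups directly from the Weierstrass normal forms of Notation~\ref{c4c6}: $C_4=[\mathcal{C}_4]$ and $C_4'$ both have automorphism group $\mu_4$ (the extra marked point of $C_4'$ is a fixed point of $i$, hence of $-1=i^2$), so $C_4\cong C_4'\cong B\mu_4$; $C_6=[\mathcal{C}_6]$ has automorphism group $\mu_6$, so $C_6\cong B\mu_6$; and for $C_6'\subset\mathcal{M}_{1,2}$ and $C_6''\subset\mathcal{M}_{1,3}$ one reads off from $y^2z=x^3+z^3$ that the elliptic involution $-1$ moves the extra marked point(s) while $\epsilon^2$ fixes them, so their automorphism group is $\langle\epsilon^2\rangle\cong\mu_3$ and $C_6'\cong C_6''\cong B\mu_3$. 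Substituting these identifications into the list of Theorem~\ref{twistedcompact} produces exactly $A\in\{B\mu_3,B\mu_4,B\mu_6,\mathbb{P}(4,6),\mathbb{P}(2,4),\mathbb{P}(2,2)\}$, proving the corollary.

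The main obstacle is step (i): one must make sure that the passage from the combinatorial description of the twisted sector to an honest isomorphism of stacks creates no spurious automorphisms --- i.e.\ that the clutching map really is a closed immersion on the locus in question and that the twisted-sector automorphism is transported correctly --- and then reconcile the bookkeeping of genus $0$ factors with the normalization $n_i\ge 3$ in the statement (handled by the point convention of Section~\ref{section2b}). Everything else is a direct substitution into Theorem~\ref{twistedcompact} and Lemma~\ref{aicompact}, together with the elementary automorphism computations for $C_4$, $C_4'$, $C_6$, $C_6'$, $C_6''$.
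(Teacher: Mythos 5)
Your proposal is correct and takes essentially the same route as the paper: the corollary is read off from the classification in Theorem \ref{twistedcompact}, combined with Lemma \ref{aicompact} for $\overline{A_1},\overline{A_2},\overline{A_3},\overline{A_4}$ and the Weierstrass-form identifications $C_4\cong C_4'\cong B\mu_4$, $C_6\cong B\mu_6$, $C_6'\cong C_6''\cong B\mu_3$. The only difference is that you make explicit the stack isomorphism $\overline{Z}^{I_1,\ldots,I_k}\cong \overline{Z}\times\prod_j \overline{\mathcal{M}}_{0,I_j\sqcup\bullet_j}$ (via the clutching map having no graph automorphisms), a point the paper leaves implicit in Definition \ref{base} and only records later when it identifies these spaces before Proposition \ref{agembun}.
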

\begin{proof} It is a consequence of Theorem \ref{twistedcompact}, Lemma \ref{aicompact}, and the fact that $C_4 \cong B \mu_4$, $C_6 \cong B \mu_6$, $C_6'\cong C_6'' \cong B \mu_3$ (as a consequence of Theorem \ref{weierstrass}).
\end{proof}
\subsection{The cohomology of the inertia stack of \mbun}

We can use the results estabilished in Theorem \ref{twistedcompact} and Corollary \ref{fondamentale} to compute the dimension of the vector space $H^*_{CR}(\overline{\mathcal{M}}_{1,n}, \mathbb{Q})$ (see Definition \ref{crvector}). We write the formula for the dimension as a function of the dimension of $H^*(\overline{\mathcal{M}}_{0,n})$, which is well known after Keel \cite{keel}. Then, let
\begin{displaymath}h(n):= \dim H^*(\overline{\mathcal{M}}_{0,n+1}, \mathbb{Q})= \sum_k a^k(n)\end{displaymath} (the latter notation is the one of \cite[p. 550]{keel} shifted by $1$). 

\begin {corollary} \label{coomologiachenruan} The dimension of the Chen--Ruan cohomology vector space of \mbun \ is:
\begin{displaymath}\dim\left(H^*_{CR}\left(\overline{\mathcal{M}}_{1,n}, \mathbb{Q}\right)\right)= \dim (H^*(\overline{\mathcal{M}}_{1,n}, \mathbb{Q}))+ 8 h(n) + 3 \sum \ \binom{n}{i, j} \ h(i) h(j) +\end{displaymath} \begin{displaymath}+\frac{2}{3} \sum \ \binom{n}{i,j,k} \ h(i) h(j) h(k) + \frac{1}{12} \sum \ \binom{n}{i,j,k,l}\  h(i) h(j) h(k) h(l),\end{displaymath}
 where the sum is over indices $1 \leq i,j,k,l  \leq n$ such that their sum is $n$.
\end{corollary}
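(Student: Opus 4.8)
The plan is to read off the Chen--Ruan cohomology as a graded vector space from the decomposition $H^*_{CR}(\overline{\mathcal{M}}_{1,n},\mathbb{Q}) = H^*(\overline{\mathcal{M}}_{1,n},\mathbb{Q}) \oplus \bigoplus_{i\in T} H^*(X_i,\mathbb{Q})$ of Definition \ref{decompo}, and to evaluate $\sum_{i\in T}\dim H^*(X_i,\mathbb{Q})$ using the explicit list of twisted sectors in Theorem \ref{twistedcompact} together with the product descriptions of Corollary \ref{fondamentale} and the Künneth formula. The untwisted sector accounts for the first summand $\dim H^*(\overline{\mathcal{M}}_{1,n},\mathbb{Q})$, so only the twisted sectors have to be counted.

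First I would record the rational cohomology of the possible ``base'' stacks. Since over $\mathbb{Q}$ the cohomology of a Deligne--Mumford stack coincides with that of its coarse moduli space, one has $\dim H^*(B\mu_k,\mathbb{Q}) = \dim H^*(\mathrm{pt},\mathbb{Q}) = 1$ for all $k$, and $\dim H^*(\mathbb{P}(a,b),\mathbb{Q}) = \dim H^*(\mathbb{P}^1,\mathbb{Q}) = 2$ for every weighted projective line; recall $\overline{A_1}\cong\mathbb{P}(4,6)$, $\overline{A_2}\cong\mathbb{P}(2,4)$, $\overline{A_3}\cong\overline{A_4}\cong\mathbb{P}(2,2)$ by Lemma \ref{aicompact}, and $C_4\cong B\mu_4$, $C_6\cong B\mu_6$, $C_6'\cong C_6''\cong B\mu_3$ by Theorem \ref{weierstrass}. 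By Corollary \ref{fondamentale} a twisted sector with base $A$ lying over a partition $\{I_1,\ldots,I_k\}$ of $[n]$ is the product of $A$ with the factors $\overline{\mathcal{M}}_{0,I_r\sqcup\bullet_r}$, so (using the convention $\overline{\mathcal{M}}_{0,\{i,\bullet\}}=\mathrm{pt}$, equivalently $h(1)=1$) its cohomology has dimension $\dim H^*(A,\mathbb{Q})\cdot h(|I_1|)\cdots h(|I_k|)$, where $h(m)=\dim H^*(\overline{\mathcal{M}}_{0,m+1},\mathbb{Q})$.

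Next I would group the list of Theorem \ref{twistedcompact} by the number $k$ of genus $0$ tails, remembering (Notation \ref{notatio}) that a symbol such as $i/-i$, $\epsilon^2/\epsilon^4$ or $\epsilon/\epsilon^2/\epsilon^4/\epsilon^5$ denotes $2$, $2$, respectively $4$ copies of one twisted sector. For $k=1$ the sectors $(\overline{A_1}^{[n]},-1)$, $(C_4^{[n]},i/-i)$ and $(C_6^{[n]},\epsilon/\epsilon^2/\epsilon^4/\epsilon^5)$ contribute $(2+2+4)h(n)=8h(n)$. For $k=2$ each unordered partition $\{I_1,I_2\}$ carries $(\overline{A_2}^{I_1,I_2},-1)$, $(C_4^{I_1,I_2},i/-i)$, $(C_6^{I_1,I_2},\epsilon^2/\epsilon^4)$, contributing $(2+2+2)\,h(|I_1|)h(|I_2|)$. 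For $k=3$ each $\{I_1,I_2,I_3\}$ carries $(\overline{A_3}^{I_1,I_2,I_3},-1)$ and $(C_6^{I_1,I_2,I_3},\epsilon^2/\epsilon^4)$, contributing $(2+2)\prod_r h(|I_r|)$. For $k=4$ each $\{I_1,I_2,I_3,I_4\}$ carries only $(\overline{A_4}^{I_1,I_2,I_3,I_4},-1)$, contributing $2\prod_r h(|I_r|)$.

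Finally I would convert the sums over unordered partitions of $[n]$ into sums over ordered size vectors: since the blocks of such a partition are pairwise distinct, an unordered partition into $k$ nonempty blocks of sizes $(m_1,\ldots,m_k)$ corresponds to exactly $k!$ ordered ones, each counted by $\binom{n}{m_1,\ldots,m_k}$, so $\sum_{\{I_1,\ldots,I_k\}}\prod_r h(|I_r|) = \frac{1}{k!}\sum_{m_1+\cdots+m_k=n}\binom{n}{m_1,\ldots,m_k}\prod_r h(m_r)$. Substituting yields the coefficients $8$, $6/2!=3$, $4/3!=2/3$ and $2/4!=1/12$, which together with the untwisted term $\dim H^*(\overline{\mathcal{M}}_{1,n},\mathbb{Q})$ give precisely the stated formula (for small $n$ the higher sums are empty, consistently with the absence of those sectors). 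The only genuinely delicate point — hence the step to watch — is matching the automorphism multiplicities $2,2,4$ to the correct base stack and carrying the symmetry factor $1/k!$ correctly; the rest is the Künneth formula and elementary counting of set partitions.
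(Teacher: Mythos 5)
Your proposal is correct and follows essentially the same route as the paper, which deduces the formula from Theorem \ref{twistedcompact} and Corollary \ref{fondamentale} using $\dim H^*(\mathrm{pt},\mathbb{Q})=1$ and $\dim H^*(\mathbb{P}^1,\mathbb{Q})=2$; you merely make explicit the K\"unneth bookkeeping and the conversion of unordered set partitions into multinomial sums, which produces the coefficients $8$, $3$, $2/3$, $1/12$. All multiplicities and base-stack identifications in your count match the paper's decomposition.
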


\begin{proof} This result is obtained from Theorem \ref{twistedcompact} and Corollary \ref{fondamentale}, using the fact that the dimension of the cohomology of a point is $1$ and the dimension of the cohomology of the projective line is $2$.
\end{proof}

We introduce the generating polynomials:

\begin{eqnarray}  \label{serietotale} P_0(s):=\sum_{n=0}^{\infty}\frac{Q_0(n)}{n!}s^n \\ P_1(s):=\sum_{n=0}^{\infty}\frac{Q_1(n)}{n!}s^n  \\ P_1^{CR}(s):=\sum_{n=0}^{\infty}\frac{Q_1^{CR}(n)}{n!}s^n
\end{eqnarray}

\noindent where:

\begin{eqnarray*}
Q_0(n):=\dim H^*(\overline{\mathcal{M}}_{0,n+1})=h(n) \\ 
Q_1(n):=\dim H^*(\overline{\mathcal{M}}_{1,n}) \\ 
Q_1^{CR}(n):=\dim H^*_{CR}(\overline{\mathcal{M}}_{1,n})
\end{eqnarray*}

\noindent with the convention that when the right hand side is not defined, the left hand side equals $1$. Formula \ref{coomologiachenruan} can now be written compactly.

\begin {theorem} The following equality between power series relates the dimensions of the cohomology group of $\overline{\mathcal{M}}_{0,n}$ and $\overline{\mathcal{M}}_{1,n}$ with the dimension of the Chen--Ruan cohomology group of $\overline{\mathcal{M}}_{1,n}$.
\begin {equation} \label {samuel}
 P_1^{CR}(s)=P_1(s)+8P_0(s)+3P_0(s)^2+\frac{2}{3}P_0(s)^3+\frac{1}{12}P_0(s)^4.
\end {equation} 
\end {theorem}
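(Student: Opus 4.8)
The plan is to turn Corollary~\ref{coomologiachenruan} into an identity of exponential generating functions by recognizing each summand on the right-hand side of that formula as the coefficient extraction from a product of the series $P_0(s)$. Recall that if $(f(n))$ and $(g(n))$ are two sequences with exponential generating functions $F(s)=\sum f(n)s^n/n!$ and $G(s)=\sum g(n)s^n/n!$, then the product $F(s)G(s)$ has as its $n$-th coefficient $\frac{1}{n!}\sum_{i+j=n}\binom{n}{i,j} f(i)g(j)$; more generally the $r$-fold product $F(s)^r$ has $n$-th coefficient $\frac{1}{n!}\sum_{i_1+\cdots+i_r=n}\binom{n}{i_1,\ldots,i_r} f(i_1)\cdots f(i_r)$. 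This is exactly the shape of the four sums appearing in Corollary~\ref{coomologiachenruan}, with $f=h=Q_0$, so the identity should fall out of bookkeeping.

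First I would match the low-order terms. The term $\dim H^*(\overline{\mathcal{M}}_{1,n},\mathbb{Q})=Q_1(n)$ contributes $P_1(s)$. The term $8h(n)=8Q_0(n)$ contributes $8P_0(s)$, which accounts for the eight base twisted sectors living over a single point-type factor: namely $(\overline{A_1}^{[n]},-1)$, $(C_4^{[n]},i/-i)$ (two of them), $(C_6^{[n]},\epsilon/\epsilon^2/\epsilon^4/\epsilon^5)$ (four of them), giving $1+2+1+4=8$; here the relevant twisted sector is $A\times\overline{\mathcal{M}}_{0,n+1}$ with $A$ a point or $B\mu_N$, whose cohomology has dimension $h(n)=Q_0(n)$. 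Next, the coefficient $3$ in front of $\sum\binom{n}{i,j}h(i)h(j)$ counts the twisted sectors $(\overline{A_2}^{I_1,I_2},-1)$, $(C_4^{I_1,I_2},i/-i)$ (two), $(C_6^{I_1,I_2},\epsilon^2/\epsilon^4)$ (two): that is $1+2+2=5$ automorphism-labelled sectors, but each is indexed by an \emph{unordered} partition $\{I_1,I_2\}$ while the generating-function product $P_0(s)^2$ sums over ordered pairs, so one divides by $2$, giving $5/2$; wait — one must be careful here, and this is where the main subtlety lies.

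The main obstacle is the combinatorial accounting of \emph{unordered} partitions versus \emph{ordered} ones, and of the $\overline{\mathcal{M}}_{0,I_i\sqcup\bullet_i}$ factors whose dimension is $h(|I_i|)=Q_0(|I_i|)$ only after the extra marked point $\bullet_i$ is counted — so a block $I_i$ of size $m$ contributes $h(m)$, matching $Q_0(m)$, good. For the $k$-block sectors one sums over partitions of $[n]$ into $k$ \emph{nonempty} unordered blocks; the symmetric-group factor $S_k$ acts freely on the set of \emph{ordered} such partitions when the blocks are distinct, so passing from ordered to unordered introduces a factor $1/k!$ \emph{on the generic stratum}, and one must check that the automorphism $-1$ (resp.\ $i$, resp.\ $\epsilon^2$) of the base makes the twisted sector genuinely symmetric under permuting the bullets so that $\overline{Z}^{I_1,\ldots,I_k}$ really is identified with $\overline{Z}^{I_{\sigma(1)},\ldots,I_{\sigma(k)}}$ — but this is precisely Notation~\ref{doppianota}, which we may assume. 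Thus the $2$-block contribution is $\tfrac{1}{2}\cdot(\text{number of labelled }2\text{-block sectors})\cdot P_0(s)^2$; reconciling this with the coefficient $3$ in Corollary~\ref{coomologiachenruan} requires that the number of $2$-block sectors counted \emph{with cohomological multiplicity} be $6$ — indeed $\overline{A_2}\cong\mathbb{P}(2,4)$ has coarse space $\mathbb{P}^1$ so its cohomology has dimension $2$, contributing $2$; $C_4^{I_1,I_2}$ is a point with two automorphism labels, contributing $1+1=2$; $C_6^{I_1,I_2}$ a point with two labels, contributing $1+1=2$; total $6$, and $6/2=3$. \emph{Check.} The same accounting gives $k=3$: $\overline{A_3}\cong\mathbb{P}(2,2)$ contributes $2$, $C_6^{I_1,I_2,I_3}$ contributes $1+1=2$, total $4$, and $4/3!=2/3$. \emph{Check.} And $k=4$: only $\overline{A_4}\cong\mathbb{P}(2,2)$ with label $-1$, contributing $2$, and $2/4!=1/12$. \emph{Check.}

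Having matched every term of Corollary~\ref{coomologiachenruan} with the corresponding power of $P_0(s)$ under the exponential-generating-function dictionary — using $Q_0(n)=h(n)$, the multinomial-convolution formula for products of exponential generating functions, and the multiplicities $8$, $6$, $4$, $2$ of labelled twisted sectors of block-type $1,2,3,4$ weighted by $\dim H^*$ of the weighted-projective/classifying-stack factor, divided by $k!$ to pass from ordered to unordered partitions — one obtains coefficient-by-coefficient the identity
\begin{equation*}
Q_1^{CR}(n)=Q_1(n)+8\,[s^n]P_0(s)\,n!+3\,[s^n]P_0(s)^2\,n!+\tfrac{2}{3}\,[s^n]P_0(s)^3\,n!+\tfrac{1}{12}\,[s^n]P_0(s)^4\,n!,
\end{equation*}
which is exactly the assertion that $P_1^{CR}(s)=P_1(s)+8P_0(s)+3P_0(s)^2+\tfrac{2}{3}P_0(s)^3+\tfrac{1}{12}P_0(s)^4$. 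Finally one checks the boundary convention $n=0,1,2$ (where some $\overline{\mathcal{M}}_{0,n}$ are undefined and the stated convention sets the relevant term to $1$) against the known small cases in Section~$8$; these are finitely many verifications and pose no difficulty. $\square$
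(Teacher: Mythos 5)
Your overall route is the paper's own: the theorem is stated there as nothing more than a compact rewriting of Corollary \ref{coomologiachenruan} through the standard exponential-generating-function dictionary, and your matching of the coefficients $8$, $3$, $\tfrac{2}{3}$, $\tfrac{1}{12}$ against the sector list of Theorem \ref{twistedcompact} (each unordered $k$-block family weighted by $\dim H^*$ of the $\mathbb{P}(a,b)$ or $B\mu_N$ factor and divided by $k!$) is exactly the computation that produces that corollary in the first place.

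Two points in your write-up need attention. First, a bookkeeping slip: in justifying the coefficient $8$ you list seven one-block sectors, claim each contributes $h(n)$, and write $1+2+1+4=8$. In fact $(\overline{A_1}^{[n]},-1)\cong\mathbb{P}(4,6)\times\overline{\mathcal{M}}_{0,n+1}$ contributes $2h(n)$, because the base factor has two-dimensional cohomology (exactly as you correctly use for $\overline{A_2},\overline{A_3},\overline{A_4}$ in the $k=2,3,4$ cases), so the correct count is $2+2+4=8$; this is harmless to the logic, since the $8$ is already part of the corollary you start from, but as written your reasoning is internally inconsistent. Second, and more substantively: the sums in Corollary \ref{coomologiachenruan} run over indices $i,j,k,l\geq 1$ (nonempty blocks), whereas the coefficient of $s^n$ in $P_0(s)^k$ involves all tuples of indices $\geq 0$. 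The two shapes coincide only if $P_0$ has vanishing constant term, i.e.\ $Q_0(0)=0$; under the literal convention stated in the paper ($Q_0(0)=1$) the zero-index terms contaminate \emph{every} coefficient, not just the small ones --- for instance at $n=1$ the right-hand side would give $2+8+6+2+\tfrac{1}{3}$ instead of the correct $Q_1^{CR}(1)=10$. So your closing remark that the convention only requires ``finitely many boundary verifications at $n=0,1,2$'' misdiagnoses the issue: what is needed is the (evidently intended) stipulation that the $P_0$ series starts at $n=1$ with $Q_0(1)=1$, after which the shapes match exactly and the identity is indeed immediate, coefficient by coefficient.
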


\subsection{The classes of the twisted sectors in \mbun}

\label{basedivisorisec}
In this section, we want to express the classes $[Y]$ for all $Y$ a twisted sector of \mbun, as linear combinations of elements in $R^*($\mbun$)$. It is possible to express them as linear combinations of products of divisor classes in \mbun. This is due to the fact that there are base twisted sectors (Definition \ref{base}) in genus $1$ only up to $4$ marked points, and Belorousski's Theorem \ref{belotheorem}. In fact, we manage to compute these classes as linear combinations of products of \emph{$S_n$-invariant} boundary divisor classes.

\begin{notation}\label{classi} If $Y$ is a base twisted sector (\ref{base}), we can write \begin{displaymath}[Y] \in A^*(\overline{\mathcal{M}}_{1,n})=R^*(\overline{\mathcal{M}}_{1,n})=H^{ev}(\overline{\mathcal{M}}_{1,n}):\end{displaymath} the two equalities hold when $n \leq 10$, see \ref{nminoredieci}. If $i:Y \to \overline{\mathcal{M}}_{1,n}$ is the restriction of the map from the inertia stack, $[Y]$ is the push--forward via $i$ of the fundamental class of the twisted sector $Y$ (see \ref{closedsubstack}).
\end{notation}

\noindent We use the notation for the divisors introduced in Notation \ref{divisorigenere0e1}. 

\begin {theorem} \label{basedivisori} Let $Y$ be a base twisted sector of $\overline{\mathcal{M}}_{1,n}$ (Definition \ref{base}). We express its class in the cohomology ring as a linear combination of products of $S_n$-invariant divisor classes.
\begin {itemize}
\item Base space classes coming from \mb{1}{1}:
\begin {enumerate} 
\item $[$\azerop$]$$=1$, the fundamental class of \mb{1}{1};
\item $[C_4]=\frac{1}{2} d_{irr}$
\item $[C_6]=\frac{1}{3} d_{irr}$.
\end {enumerate}
\item Base space classes coming from \mb{1}{2}:
\begin {enumerate} 
\item $[$\auno$]=\frac{1}{4} d_{irr}+3 d_{\{1,2\}} $;
\item $[C_4']=\frac{1}{2} d_{irr} d_{\{1,2\}}$;
\item $[C_6']=\frac{2}{3} d_{irr} d_{\{1,2\}}$.
\end{enumerate}
\item Base space classes coming from \mb{1}{3}:
\begin {enumerate} 
\item $[$\adue$]=\frac{1}{4} d_{irr} \left( \sum_{\{i,j\} \subset \{1,2,3\}} d_{\{i,j\}}\right)+ \frac{1}{4} d_{irr} d_{\{1,2,3\}}+$ \\ $\phantom{\overline{A}_3)=}+ 2 d_{\{1,2,3\}} \left(\sum_{\{i,j\} \subset \{1,2,3\}} d_{\{i,j\}}\right)$;
\item $[C_6'']=\frac{2}{9} d_{irr} d_{\{1,2,3\}}\left(\sum_{\{i,j\} \subset \{1,2,3\}} d_{\{i,j\}}\right) $.
\end {enumerate}
\item Base space classes coming from \mb{1}{4}:
\begin {enumerate} 
\item $[$\atre$]= \frac{1}{2} d_{\{1,2,3,4\}}\sum_{\{i,j,k\} \subset \{1,2,3,4\}} d_{\{i,j,k\}}\left( \sum_{\{l,m\} \subset \{i,j,k\}} d_{\{l,m\}}\right) +$ \\ $\phantom{\overline{A}_4=} + \frac{1}{12} d_{irr} \sum_{\{i,j,k\} \subset \{1,2,3,4\}} d_{\{i,j,k\}} \left( \sum_{\{l,m\} \subset \{i,j,k\}} d_{\{i,j\}}\right)+ $ \\$\phantom{\overline{A}_4=} + \frac{1}{12} d_{irr} d_{\{1,2,3,4\}}\left(\sum_{\{i,j\} \subset \{1,2,3,4\}} d_{\{i,j\}}\right) $.
\end {enumerate}
\end {itemize}
\end{theorem}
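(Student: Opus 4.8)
The strategy is to compute, for each base twisted sector $Y$, the class $[Y] \in A^*(\overline{\mathcal{M}}_{1,n}) = H^{ev}(\overline{\mathcal{M}}_{1,n},\mathbb{Q})$ (for $n\leq 5$, where the two agree and are generated by divisors by \cite{belo}) by exploiting the pullback/pushforward calculus along the forgetful and gluing maps, together with the description of $\overline{A_i}$ as weighted projective stacks from Lemma \ref{aicompact}. The point of departure is $n=1$: there $[\overline{A_1}] = 1$ is tautological, while $C_4$ and $C_6$ are the orbifold points of $\overline{\mathcal{M}}_{1,1} \cong \mathbb{P}(4,6)$ lying inside $D_{irr}$ (the cuspidal cubic locus). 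Since $D_{irr} \cong B\mu_2$ has two further automorphisms realized by $\mu_4$-- and $\mu_6$--stabilized points, and the generic point of $D_{irr}$ has stabilizer $\mu_2$, the fractions $\tfrac12$ and $\tfrac13$ arise as the ratio of the order of the generic stabilizer of $D_{irr}$ to the order of the stabilizer at $C_4$, resp.\ $C_6$; more precisely one checks $D_{irr} = [C_4] \cdot [\text{something}]$ is not quite right---rather one computes directly on $\mathbb{P}(4,6)$ that the point class $[C_4]$ equals $\tfrac{1}{\deg}$ times an appropriate multiple, using that $D_{irr}$ has self-intersection data forcing $[C_4]=\tfrac12 D_{irr}$ and $[C_6]=\tfrac13 D_{irr}$ as elements of $A^1(\overline{\mathcal{M}}_{1,1})_{\mathbb Q}$, which is $1$-dimensional.

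\textbf{Inductive step via the universal curve.} For the base twisted sectors living in $\overline{\mathcal{M}}_{1,k}$ with $k\geq 2$, I would use that $\pi_{[k-1]}:\overline{\mathcal{M}}_{1,k}\to\overline{\mathcal{M}}_{1,k-1}$ is the universal curve, so $\overline{A_k}$ (and the $C_4', C_6', C_6''$) can be realized as components of the intersection of $\pi^{*}(\overline{A_{k-1}})$ with the locus of $2$-torsion (or $\mu_N$-fixed) sections. Concretely, the locus of the $i$-th section being a $2$-torsion point is cut out inside the universal curve by a natural class, and the comparison $\psi$-class / boundary-class identity on $\overline{\mathcal{M}}_{1,k}$ lets one rewrite everything in terms of the $D_I$ and $D_{irr}$. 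The self-intersection $D_{\{1,2\}}^2 = -\psi_{\bullet}$ type relations (and their genus--$0$ analogues $\delta_I$) will be used repeatedly to collapse products; the coefficients $3, \tfrac12, \tfrac23$ etc.\ for the $\overline{\mathcal{M}}_{1,2}$ classes come out of the degree computation $\deg(\overline{A_2}\to\overline{A_1})$ together with the fact that $\overline{A_2}\cong\mathbb P(2,4)$ meets $D_{\{1,2\}}$ and $D_{irr}$ transversally at the orbifold points identified in Figure \ref{fig2}. Each successive case ($\overline{A_3}$ in $\overline{\mathcal{M}}_{1,3}$, $\overline{A_4}$ in $\overline{\mathcal{M}}_{1,4}$) is handled the same way, pushing forward the previous formula and intersecting with the new torsion condition, then symmetrizing over $S_n$.

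\textbf{Verification and bookkeeping.} Once a candidate formula is written down, the cleanest check is numerical: intersect $[Y]$ against a complementary-dimension monomial in the $D_I$ whose intersection numbers are known from Keel's presentation in genus $0$ and from \cite{belo} / \cite{getzler1} in genus $1$, and confirm it matches the intersection number computed directly on $Y$ itself using Lemma \ref{aicompact} (a weighted projective line or point crossed with $\overline{\mathcal{M}}_{0,\bullet}$'s, whose intersection theory is elementary). Since $A^*(\overline{\mathcal{M}}_{1,n})$ is generated by divisors and the pairing between $R^k$ and $R^{\dim -k}$ is perfect in this range, matching enough intersection numbers pins the class down uniquely, so this both produces and certifies the stated expressions.

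\textbf{Main obstacle.} The hard part is not any single pushforward but the combinatorial bookkeeping: the excess/self-intersection corrections (the $D_I^2$ terms and the genus--$0$ relations among the $\delta$'s) proliferate as $k$ grows from $1$ to $4$, and one must be careful that the rational coefficients track the orders of the abelian automorphism groups correctly --- in particular that $C_6', C_6'' \cong B\mu_3$ rather than $B\mu_6$ (only $\epsilon^2,\epsilon^4$ survive after marking), which is exactly what produces $\tfrac23$ and $\tfrac29$ instead of $\tfrac13$ and $\tfrac19$. Getting the symmetrization right (summing over the correct set of partitions with the correct multiplicities, and not double-counting when two blocks coincide in size) is the other place where an error would creep in; I would cross-check the final $\overline{A_4}$ formula against the explicit $n=4$ computation promised in the last section of the paper.
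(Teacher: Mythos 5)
This is essentially the paper's own argument: for $n\le 4$ one works in the divisor-generated ring $A^*(\overline{\mathcal{M}}_{1,n})=H^{ev}(\overline{\mathcal{M}}_{1,n},\mathbb{Q})$, writes each class in a basis of boundary monomials, and pins down the coefficients using the degrees ($3{:}1$, $2{:}1$, isomorphism) of the forgetful maps restricted to $\overline{A_2},\overline{A_3},\overline{A_4}$ from Lemma \ref{aicompact} together with a few known intersection numbers (e.g.\ with $D_{\{1,2\}}$), then symmetrizes over $S_n$ --- precisely your ``verification and bookkeeping'' step, which, as you observe, already both produces and certifies the formulas, so the universal-curve/torsion-locus construction is extra scaffolding rather than a different method. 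One correction: $C_4$ and $C_6$ are smooth elliptic curves and do \emph{not} lie in $D_{irr}$ (which is the nodal, not cuspidal, locus); the equalities $[C_4]=\tfrac12 D_{irr}$ and $[C_6]=\tfrac13 D_{irr}$ hold simply because $A^1(\overline{\mathcal{M}}_{1,1})_{\mathbb{Q}}\cong\mathbb{Q}$ and the stacky point degrees are $\tfrac14$, $\tfrac16$, $\tfrac12$ respectively --- the direct computation on $\mathbb{P}(4,6)$ that you fall back on anyway.
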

\begin {proof} We use the methods first established by Mumford \cite[Section III]{mumford}. For the classes of the points the result is trivial. 
We show how to obtain the result for the classes of the spaces $\overline{A_i}$. We refer to \cite{belo} for all the bases of the Chow groups of \mbun \ that we use in the following. 

First of all, a basis of $A^1($\mb{1}{2}$)$ is given by $d_{irr}$ and $d_{\{1,2\}}$. Therefore:
\begin {equation}\label{relazione}
[\overline{A_2}]=a d_{irr}+b d_{\{1,2\}}.
\end{equation}
Taking the push--forward via $\pi_{1}: \overline{\mathcal{M}}_{1,2} \to$ \mb{1}{1}, and using that the forgetful morphism restricted to \auno \ is of degree three (Lemma \ref{aicompact}), gives that $b=3$. Now we observe that $\overline{A_2}$ does not intersect $d_{\{1,2\}}$; by using that \begin{displaymath} d_{\{1,2\}} d_{\{1,2\}} = - \frac{1}{24}, \quad  d_{\{1,2\}} d_{irr}= 1/2, \end{displaymath} we obtain $a=\frac{1}{4}$.

A basis of the $S_3$-invariants of $A^2($\mb{1}{3}$)$ is given by:

\begin{displaymath}
d_{irr} \left( d_{\{1,2\}}+ d_{\{1,3\}}+ d_{\{2,3\}} \right), \ d_{irr} d_{\{1,2,3\}}, \ d_{\{1,2,3\}} \left(\sum_{\{i,j\} \subset \{1,2,3\}} d_{\{i,j\}}\right),
\end{displaymath}

\noindent therefore $[\overline{A_3}]$ can be uniquely written as:

\begin {displaymath}
a d_{irr} \left( d_{\{1,2\}}+  d_{\{1,3\}}+ d_{\{2,3\}} \right) +b  d_{irr} d_{\{1,2,3\}} +c d_{\{1,2,3\}} \sum_{\{i,j\} \subset \{1,2,3\}} d_{\{i,j\}}.
\end {displaymath}
Taking the push-forwards via the map forgetting one marked point, and using that these forgetful morphisms restricted to \adue \ are of degree two (Lemma \ref{aicompact}), gives:

\begin{displaymath}
2 a = \frac{1}{2}, \quad c=2.
\end{displaymath}
Now to determine $b$, intersect the class of \adue \ with $d_{\{1,2\}}$ to find $b=a$.

The class of $\overline{A_4}$ is computed similarly. The dimension of the $S_4$-invariants of $A^3(\overline{\mathcal{M}}_{1,4})$ is four; the four coefficients of the class of $\overline{A_4}$ are obtained by intersecting with the four $S_4$-invariant divisors of \mb{1}{4}. Another way to obtain three relations among the four coefficients is by considering the push-forward via the map that forgets the last marked point, this provides a nontrivial check of the result. 
\end {proof}

\begin {corollary} \label{corbasedivisori} Let $(Y,g)$ be a twisted sector of \mbun. Then $[Y]$ is in the subalgebra generated by the divisors of \mbun.
\end {corollary}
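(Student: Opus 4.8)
The plan is to reduce the statement about an arbitrary twisted sector $(Y,g)$ to the already-proved statement (Theorem \ref{basedivisori}) about the base twisted sectors $\overline{Z}$ living in $\overline{\mathcal{M}}_{1,k}$ for $k \leq 4$. By Theorem \ref{twistedcompact} and Definition \ref{base}, every twisted sector of $\overline{\mathcal{M}}_{1,n}$ has the form $(\overline{Z}^{\{I_1,\ldots,I_k\}}, g)$, where $\overline{Z}$ is one of the base twisted sectors $\overline{A_1},\ldots,\overline{A_4}, C_4, C_6, C_4', C_6', C_6''$ inside $\overline{\mathcal{M}}_{1,k}$, and $\overline{Z}^{\{I_1,\ldots,I_k\}} = j_k\bigl(\overline{Z} \times \overline{\mathcal{M}}_{0,I_1 \sqcup \bullet_1} \times \cdots \times \overline{\mathcal{M}}_{0,I_k \sqcup \bullet_k}\bigr)$. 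So the first step is: the class $[\overline{Z}^{\{I_1,\ldots,I_k\}}]$ is the $j_{k*}$-pushforward of the exterior product $[\overline{Z}] \boxtimes [\overline{\mathcal{M}}_{0,I_1\sqcup\bullet_1}] \boxtimes \cdots \boxtimes [\overline{\mathcal{M}}_{0,I_k\sqcup\bullet_k}]$, and since $j_k$ is a gluing map, this pushforward lands in $R^*(\overline{\mathcal{M}}_{1,n})$.

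The second step is to upgrade this from ``lands in $R^*$'' to ``lands in the subalgebra generated by divisors.'' Here I would argue as follows. By Theorem \ref{basedivisori}, $[\overline{Z}] \in A^*(\overline{\mathcal{M}}_{1,k})$ is a polynomial in the boundary divisors $D_{irr}$ and $D_J$ of $\overline{\mathcal{M}}_{1,k}$ (for $k \leq 4$), and each genus-$0$ factor contributes its fundamental class $1$. Now I push this polynomial expression forward through $j_k$. The key mechanism is the standard compatibility between the gluing maps and the boundary divisor classes: the gluing map $j_k$ sends (restrictions of) the classes $D_{irr}, D_J$ on $\overline{\mathcal{M}}_{1,k}$, and $\psi$-classes at the glued points, to explicit universal polynomials in the boundary divisors $D_I$, $D_{irr}$ of $\overline{\mathcal{M}}_{1,n}$, via the usual self-intersection formula $j_{k*}(j_k^* x \cdot y) = x \cdot j_{k*}(y)$ together with the fact that the image $j_k(\cdots)$ is itself (a component of) a boundary stratum, hence a product of boundary divisors of $\overline{\mathcal{M}}_{1,n}$ by the generation statement $B^*_{1,n} = R^*_{1,n}$ for boundary strata classes. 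Concretely: write $[\overline{Z}] = P(D_{irr}, D_J)$; then $[\overline{Z}^{\{I_1,\ldots,I_k\}}] = j_{k*}\bigl(j_k^*(\widetilde P) \cdot 1\bigr)$ where $\widetilde P$ is an extension of $P$ to $\overline{\mathcal{M}}_{1,n}$ (possible because the boundary divisors of $\overline{\mathcal{M}}_{1,k}$ pull back along the forgetful composition to sums of boundary divisors of $\overline{\mathcal{M}}_{1,n}$), and then by the projection formula $[\overline{Z}^{\{I_1,\ldots,I_k\}}] = \widetilde P \cdot j_{k*}(1) = \widetilde P \cdot [\overline{\mathcal{M}}_{0,I_1\sqcup\bullet_1}^{\cdots}]$-type expression, where the fundamental class of the image of $j_k$ is a product of boundary divisors $D_I$ of $\overline{\mathcal{M}}_{1,n}$ (namely one $D$-divisor for each of the $k$ nodes being created, when $k \geq 2$; for $k = 1$ one gets a power of $D_{irr}$). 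Each factor in this final expression is a polynomial in the divisor classes of $\overline{\mathcal{M}}_{1,n}$, so the product is too, which is exactly the claim.

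For the twisted sectors $C_4^{[n]}, C_6^{[n]}$ and the ``base'' cases $\overline{A_1}^{[n]}$, the argument is even more direct: these are already handled by Theorem \ref{basedivisori} when $n \leq 5$ and in general by the same pushforward reasoning, noting $[C_4] = \tfrac12 D_{irr}$, $[C_6] = \tfrac13 D_{irr}$, etc. The main obstacle I anticipate is purely bookkeeping: one must check carefully that pulling back the generators $D_{irr}, D_J$ of $A^1(\overline{\mathcal{M}}_{1,k})$ along the relevant forgetful/comparison maps, and pushing forward along $j_k$, stays inside the divisor-generated subalgebra rather than merely inside $R^*$ — but this is guaranteed by the fact that all the intermediate classes appearing (boundary strata of $\overline{\mathcal{M}}_{1,n}$, $\psi$-classes at glued nodes, which by genus-$0$ relations on the $\overline{\mathcal{M}}_{0,\bullet}$ factors and by the comparison formulas are again polynomials in boundary divisors) are of this form. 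One could alternatively bypass the bookkeeping entirely by invoking Theorem \ref{basedivisori} directly: since every twisted sector is a product of a base twisted sector with genus-$0$ spaces, and $[\overline{Z}]$ is divisorial by that theorem, the corollary follows once one knows $j_{k*}$ of a product of divisor classes is a product of divisor classes, which is the content of the comparison formulas for boundary maps and holds because $B^*_{1,n} = R^*_{1,n}$.
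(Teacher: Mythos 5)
Your argument is essentially the paper's: the paper likewise writes each twisted-sector class as $j_{k*}p^*([\overline{Z}])$ with $[\overline{Z}]$ given by Theorem \ref{basedivisori} and leaves the push-forward computation implicit, which you spell out via the identity $\pi_{\{1,\ldots,k\}}\circ j_k = p$, the projection formula, and the identification of $j_{k*}(1)$ with the product of the corresponding boundary divisors. Two small corrections to your bookkeeping: for $k=1$ the image of $j_1$ is the divisor $D_{[n]}$, not a power of $D_{irr}$; and the membership of $j_{k*}(1)$ in the divisor subalgebra follows from its being the (transversal) intersection $\prod_{|I_i|\geq 2} D_{I_i}$ of boundary divisors, not from $B^*_{1,n}=R^*_{1,n}$ --- being a boundary stratum class is not sufficient, as the $2$-necklace classes of Lemma \ref{sub} show.
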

\begin {proof} As a consequence of Theorem \ref{twistedcompact}, every twisted sector class is $j_{k *} p^* ([Z])$, where $Z$ is a base twisted sector in \mb{1}{k}, $k \leq 4$, and the maps fit into the diagram:
\begin{displaymath}\xymatrix{ \overline{\mathcal{M}}_{1,k} \times \overline{\mathcal{M}}_{0,I_1+1} \times \ldots \times \overline{\mathcal{M}}_{0,I_k+1} \ar[rr]^{\hspace{2cm}j_k} \ar[d]^{p} && \overline{\mathcal{M}}_{1,n} \\
\overline{\mathcal{M}}_{1,k}, & \\
}
\end{displaymath}
where $j_{k}$ is the gluing map defined in Section \ref{section2b}, and $p$ is the projection onto the first factor. 

 Suppose without loss of generality that $1 \in I_1, \ldots, k \in I_k$, and $\pi_{\{1,\ldots,k\}}$ be the forgetful map. Then the class of $Y$ is just the transversal intersection of   \begin{displaymath} \pi_{\{1,\ldots, k\}}^*([Z]) \quad \textrm{and} \quad j_{k *}([\overline{\mathcal{M}}_{1,k}\times \overline{\mathcal{M}}_{0,I_1+1} \times \ldots \times \overline{\mathcal{M}}_{0,I_k+1}]).\end{displaymath} The second class is clearly in the subalgebra generated by the divisors, and so is the first since the class of $[Z]$ is in the subalgebra generated by the divisors by Theorem \ref{basedivisori}. 

\end {proof}

\section {The age grading}
\subsection {Definition of Chen--Ruan degree}
Let $X$ be a smooth algebraic stack of dimension $n$, and $x \in X$ a point. Let $T(X)$ be the tangent bundle of $X$. For any $g$ in the stabilizer group of $x$ of order $k$, there is a basis of $T_x(X)$ consisting of eigenvectors for the $g$-action. In terms of such a basis, the $g$-action is given by a diagonal matrix $M=$diag$(\xi^{a_1},\ldots,\xi^{a_n})$ where $\xi=e^{\frac{2 \pi i}{k}}$ and $a_i <k$. For any pair $(x,g)$, define a function $a(x,g):= \frac{1}{k} \sum a_i$. This function is nonnegative and takes rational values. Moreover, this function is constant on each connected component of the inertia stack (\cite[Chapter 3.2]{chenruan}).

\begin {definition} (\cite[Chapter 3.2]{chenruan}) Let $X$ be an algebraic smooth stack. The \emph{age} of a twisted sector $X_{(g)}$ is defined to be $a(x,g)$ for any point $(x,g) \in X_{(g)}$. 
\end {definition}

\noindent The age is also referred to as degree shifting number, or fermionic shift. The algebraic definition of age was given in \cite[7.1]{agv1} and \cite[7.1]{agv2}.

\begin {proposition} \label{codimension}Let $(Y,g)$, $(Y,g^{-1})$ be two connected components of the inertia stack of an algebraic stack $X$ which are exchanged by the involution of the inertia stack. Then
\begin{displaymath}
a((Y,g))+ a((Y,g^{-1}))= \codim_YX.
\end{displaymath}
\end {proposition}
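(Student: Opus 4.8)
The plan is to work pointwise on a connected component and diagonalize the action of $g$ on the tangent space of $X$, then compare the eigenvalue data for $g$ and $g^{-1}$.

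First I would fix a point $(y,g)$ in the component $(Y,g)$, with $g$ of order $k$, and choose a basis of $T_y(X)$ consisting of eigenvectors for $g$, so that $g$ acts by $\diag(\xi^{a_1},\ldots,\xi^{a_n})$ with $\xi=e^{2\pi i/k}$ and each $a_i$ chosen in the range $0\le a_i<k$. The key observation is that the tangent space to the fixed locus $Y$ at $y$ is precisely the sum of the eigenlines with eigenvalue $1$, i.e. those indices $i$ with $a_i=0$; the normal directions are exactly those with $a_i\neq 0$. Hence $\codim(Y,X)$ equals the number of indices $i$ with $1\le a_i\le k-1$.

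Next I would compute the age of $(Y,g^{-1})$ using the same basis. Since $g^{-1}$ acts by $\diag(\xi^{-a_1},\ldots,\xi^{-a_n})$, I need to rewrite each exponent $-a_i$ in the normalized range $[0,k)$: if $a_i=0$ then $-a_i\equiv 0$, while if $1\le a_i\le k-1$ then $-a_i\equiv k-a_i$ with $1\le k-a_i\le k-1$. Therefore
$$
a((Y,g))+a((Y,g^{-1}))=\frac1k\sum_{a_i\neq 0}a_i+\frac1k\sum_{a_i\neq 0}(k-a_i)=\frac1k\sum_{a_i\neq 0}k=\#\{i:a_i\neq 0\},
$$
which is exactly $\codim(Y,X)$ by the previous paragraph. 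Since both ages and the codimension are independent of the chosen point (the age is constant on connected components, as recalled before the statement), this proves the identity.

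The only real subtlety — hardly an obstacle — is checking that the fixed locus $Y$ really has tangent space equal to the $1$-eigenspace of $g$, i.e. that $Y$ is smooth of the expected dimension at $y$; this is where smoothness of $X$ (and Proposition \ref{liscezza1}) is used, guaranteeing $I(X)$ is smooth so that the components $(Y,g)$ are themselves smooth with the expected tangent spaces. One should also note that $(Y,g)$ and $(Y,g^{-1})$ share the same underlying substack $Y$ by construction of the involution on the Inertia Stack, so $\codim(Y,X)$ is unambiguous. Everything else is the elementary bookkeeping above.
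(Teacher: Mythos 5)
Your proof is correct and follows essentially the argument the paper relies on: the splitting $T_xX = T_xY \oplus N_YX|_x$ with $T_xY$ the invariant (eigenvalue-$1$) part, so that the nonzero exponents $a_i$ and $k-a_i$ for $g$ and $g^{-1}$ pair up to give $\codim(Y,X)$ — exactly the content of the remark following the proposition. No gaps; the observation that $(Y,g)$ and $(Y,g^{-1})$ share the same underlying substack and that smoothness of $I(X)$ gives the expected tangent spaces is precisely the needed justification.
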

\begin {remark} If $i:Y \rightarrow X$ is a twisted sector, and $x \in Y$ is a point, then the following splitting holds:\begin{displaymath}T_x X= T_x Y \oplus N_YX|_x.\end{displaymath} If $G:= \langle g \rangle$ is in the stabilizer group of $x$, then $T_xX$ is a representation of $G$ which splits as a sum of two representations: $T_x Y$ and $N_YX|_x$. The first of such representations is trivial by the definition of twisted sector. Therefore what is needed in order to compute the age of a twisted sector is to study the action of $G$ on $N_YX|_x$.
\end {remark}

In conclusion of this subsection we define the orbifold degree.

\begin{definition} We define the $d-th$ degree Chen--Ruan cohomology group as follows:
\begin{displaymath}
H^d_{CR}(X, \mathbb{Q}):= \bigoplus_i  H^{d-2 a(X_i,g_i)}(X_i, \mathbb{Q}),
\end{displaymath}
where the sum is over all the connected components of the inertia stack of $X$. If $\alpha$ is an element of this vector space, its \emph{orbifold degree} is $d$.
\end{definition}

\subsection {Age of \mun \ and \mbun}

 We start our computations in the smooth case. The result for the age of \m{1}{1} is well known, so we work out the other cases.

If $\phi_N$ is a generator of $\mu_N$ (and therefore of $\mu_N^{\vee}$ since we work over $\mathbb{C}$), we denote by $\langle \phi_N^k \rangle$ the one dimensional complex vector space with the action of $\mu_N$, where $\phi_N$ acts as the multiplication by $\phi_N^k$. Recall that if $G=\Aut(C,P)$ is the automorphism group of an elliptic curve, it is canonically identified with $\mu_N$ for a certain $N$ (Notation \ref{canonical}).
\label{sezioneeta}

\begin{lemma} \label{agemun} Let $Z \subset \mathcal{M}_{1,k}$ be the closed embedding of a twisted sector $(Z, \alpha)$ of $\mathcal{M}_{1,k}$, with $Z \in \{C_4, C_4', C_6, C_6', C_6''\}$. The normal bundle $N_Z \mathcal{M}_{1,k}$ is a representation of $\mu_N$ on a $k$-dimensional vector space:
\begin{itemize}
\item as a representation of $\mu_4$, $N_{C_4} \mathcal{M}_{1,1}$  is $\langle i^2 \rangle$,
\item as a representation of $\mu_4$, $N_{C_4'} \mathcal{M}_{1,2}$  is $\langle i^2 \rangle \oplus \langle i^3 \rangle$,
\item as a representation of $\mu_6$, $N_{C_6} \mathcal{M}_{1,1}$ is $\langle \epsilon^4 \rangle$,
\item as a representation of $\mu_3$ generated by $\epsilon^2$, $N_{C_6'} \mathcal{M}_{1,2}$  is $\langle \epsilon^2 \rangle \oplus \langle \epsilon^4 \rangle$,
\item as a representation of $\mu_3$ generated by $\epsilon^2$, $N_{C_6''} \mathcal{M}_{1,3}$  is $\langle \epsilon^2 \rangle \oplus \langle \epsilon^4 \rangle \oplus \langle \epsilon^4 \rangle$.
\end{itemize}
\end{lemma}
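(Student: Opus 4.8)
The plan is to compute each normal bundle as a representation of the relevant cyclic group by using the Weierstrass description of Theorem \ref{weierstrass}, carried out in families, together with the fact that the universal curve over $\overline{\mathcal{M}}_{1,k}$ is the forgetful map $\pi_{[k]}\colon\overline{\mathcal{M}}_{1,k+1}\to\overline{\mathcal{M}}_{1,k}$. The key observation is that, since all the twisted sectors in question are single points of some $\mathcal{M}_{1,k}$ with $k\le 3$, the normal bundle $N_Z\mathcal{M}_{1,k}$ is just the tangent space $T_Z\mathcal{M}_{1,k}$ itself (as $Z$ is a point, possibly with automorphisms), which is a $k$-dimensional $\mu_n$-representation; so the task is to diagonalize the action of the automorphism $\alpha$ on $T_Z\mathcal{M}_{1,k}$.

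First I would treat the base case $Z=C_4\subset\mathcal{M}_{1,1}$ (and $C_6$). Here $T_{C_4}\mathcal{M}_{1,1}$ is one-dimensional; using the presentation $\overline{\mathcal{M}}_{1,1}\cong[\mathbb{A}^2_{(a,b)}/\mathbb{G}_m]$ with weights $(4,6)$ from Theorem \ref{weierstrass}, the point $\mathcal{C}_4$ is $(a,b)=(1,0)$, whose stabilizer $\mu_4\subset\mathbb{G}_m$ acts on the $(a,b)$-plane by $(\lambda^4 a,\lambda^6 b)$. Modding out the direction tangent to the orbit (the $a$-direction), the tangent space to the stack at $C_4$ is the $b$-line with weight $6$, i.e. $\mu_4$ acts by $i^6=i^2$; this gives $N_{C_4}\mathcal{M}_{1,1}\cong\langle i^2\rangle$. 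The same computation at $\mathcal{C}_6=(a,b)=(0,1)$ with stabilizer $\mu_6$ gives the $a$-direction with weight $4$, hence $\langle\epsilon^4\rangle$. For the cases with extra marked points, I would use that $\mathcal{M}_{1,k+1}$ is the universal curve over $\mathcal{M}_{1,k}$, so $T_{Z'}\mathcal{M}_{1,k+1}$ fits in an equivariant exact sequence $0\to T_P C\to T_{Z'}\mathcal{M}_{1,k+1}\to T_Z\mathcal{M}_{1,k}\to 0$ where $(C,P)$ is the added point on the curve $C$ underlying $Z$; this sequence splits equivariantly over $\mathbb{C}$, so it suffices to identify the $\mu_n$-action on the tangent space to the curve at each of the new marked points. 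For $C_4'$ the extra point is a point of $\mathcal{C}_4$ fixed by $i$ other than the origin; by Notation \ref{canonical} the action on $T_P\mathcal{C}_4$ is via some power of $i$, and a direct check on the Weierstrass model $y^2z=x^3+xz^2$ (the fixed points of the automorphism $(x,y)\mapsto(-x,iy)$) identifies the weight as $i^3$, giving $N_{C_4'}\mathcal{M}_{1,2}\cong\langle i^2\rangle\oplus\langle i^3\rangle$. Similarly for $C_6'$ and $C_6''$, the automorphism $\epsilon^2$ of $\mathcal{C}_6$ (acting by $(x,y)\mapsto(\epsilon^2 x,y)$ up to normalization) has its non-origin fixed points on the line $x=0$, and the tangent-space weights there are computed to be $\epsilon^2$ and $\epsilon^4$ for the two such points, yielding the claimed decompositions.

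The main obstacle is keeping the normalizations consistent: the generator of $\mu_n$ must be the one acting on the cotangent space $T^\vee_P(C)$ by the distinguished root of unity (Notation \ref{canonical}), and one must be careful whether a given eigenvalue is recorded on the tangent or cotangent side, and at which marked point the relevant automorphism acts by $i$ versus $i^{-1}$. Concretely, the computation of the weights at the \emph{extra} marked points of $\mathcal{C}_4$ and $\mathcal{C}_6$ — i.e. identifying which of $i,-i$ (resp. which power of $\epsilon^2$) acts on the tangent line at each $2$-torsion or $3$-torsion point — is the delicate step, and I would pin it down by writing the automorphism explicitly in affine coordinates near each fixed point on the plane cubic and reading off the linear term. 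Everything else is bookkeeping with equivariant short exact sequences and the $\mathbb{G}_m$-quotient presentations already established in the proof of Lemma \ref{aicompact}.
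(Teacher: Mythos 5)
Your overall strategy is the same as the paper's: you compute the $\mathcal{M}_{1,1}$ cases from the weighted projective presentation $[\mathbb{A}^2_0/\mathbb{G}_m]$ with weights $(4,6)$, and for the pointed cases you use that the forgetful map is the universal curve to split $N_{Z}\mathcal{M}_{1,k}$ equivariantly into the (pullback of the) normal direction coming from $\mathcal{M}_{1,1}$ plus the tangent line to the Weierstrass curve at each extra fixed marked point, reading the weight off from the weights $(2,3)$ on $(x,y)$. Your treatment of $C_4$, $C_6$ and $C_4'$ agrees with the paper's proof.

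However, the step for $C_6'$ and $C_6''$ is wrong as written. You claim that the two non-origin fixed points of the order-$3$ automorphism on $\mathcal{C}_6$ (the points $(0,\pm 1)$ on $y^2=x^3+1$) carry tangent weights $\epsilon^2$ and $\epsilon^4$ respectively. In fact an automorphism of an elliptic curve fixing the origin acts on the tangent line at every one of its fixed points by the same scalar: on the uniformization $\mathbb{C}/\Lambda$ it is $z\mapsto \zeta z$ with constant derivative, and concretely at both points $(0,\pm 1)$ the local parameter is $x$, which transforms by $\lambda^2$. With the normalization of Notation \ref{canonical}, this scalar is $\epsilon^4$ for the generator $\epsilon^2$, at both points. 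The summand $\langle\epsilon^2\rangle$ in the statement is not a fixed-point direction at all: it is the base direction, namely the restriction to $\mu_3$ of $N_{C_6}\mathcal{M}_{1,1}\cong\langle\epsilon^4\rangle$, on which $\epsilon^2$ acts by $(\epsilon^4)^2=\epsilon^2$. Feeding your weights into your own exact-sequence bookkeeping would give $N_{C_6''}\mathcal{M}_{1,3}\cong\langle\epsilon^2\rangle\oplus\langle\epsilon^2\rangle\oplus\langle\epsilon^4\rangle$, contradicting the statement and swapping the ages $5/3$ and $4/3$ of $(C_6'',\epsilon^2)$ and $(C_6'',\epsilon^4)$ used in Corollary \ref{agembuncor}; it would also make the weight at the second marked point of $C_6'$ depend on which of the two (isomorphic) choices of fixed point is made, which cannot happen. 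Once this weight computation is corrected, your argument coincides with the paper's.
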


\begin {proof} The age for the twisted sectors of $\mathcal{M}_{1,1}$ is known from the description of it as an open substack of $\mathbb{P}(4,6)$ (see for instance \cite{mann}). It is easily checked that $i$ acts on the normal bundle as the multiplication by $i^6=i^2$ and $\epsilon$ acts as multiplication by $\epsilon^4$.

 We study the tangent bundle to $C_4'$ in \m{1}{2}, the other cases follow through. Since the forgetful morphism \mb{1}{2} $\to$ \mb{1}{1} is the universal curve, we have that the following diagram is cartesian (see \ref{c4c6} for the definition of $\mathcal{C}_4$):
\begin{displaymath}
\xymatrix{ C_4' \ar[r] \ar@/^0.4cm/^{g} [rr]& \mathcal{C}_4 \ar[r] & [ \mathcal{C}_4 / \mu_4 ] \ar[r] \ar[d] & \overline{\mathcal{M}}_{1,2} \ar[d] \\
&& C_4 \ar[r] & \overline{\mathcal{M}}_{1,1}.}
\end{displaymath}
 The normal bundle $N_{C_4'} \overline{\mathcal{M}}_{1,2}$ is therefore isomorphic as a representation of $\mu_4$ to the direct sum: $g^*(N_{[\mathcal{C}_4 / \mu_4]} \overline{\mathcal{M}}_{1,2} ) \oplus N_{C_4'} \mathcal{C}_4 $, since $\mathcal{C}_4 \to [\mathcal{C}_4 / \mu_4]$ is a finite \'etale map. The first term in the direct sum is equivariantly isomorphic to $N_{C_4} \overline{\mathcal{M}}_{1,1}$ (since the diagram is cartesian and the forgetful map is flat). The normal bundle $N_{C_4'} \mathcal{C}_4$ is equivariantly isomorphic to the tangent space to $\mathcal{C}_4$ in the second marked point. In the Weierstrass representation of $\mathcal{C}_4$ (\ref{c4c6}), this is the point with projective coordinates: $[0:0:1]$. The tangent space in these coordinates is then parametrized by $[0:t:1]$ and $i$ acts on it  as the multiplication by $i^3$ (Theorem \ref{weierstrass}).
\end {proof}
 
 \noindent With this result, it is straightforward to compute the age of all the twisted sectors of $\mathcal{M}_{1,n}$, by using the fact that the twisted sector corresponding to an involutive automorphism have age equal to half the codimension (Proposition \ref{codimension}). 

\begin{remark} \label{nomunozero} We observe that, in the proof of the above proposition, we had to use a different argument for the case $n=1$ and $n > 1$. This is due to the fact that there is no forgetful map from \m{1}{1}. In particular, if $(C,p) \in$ \m{1}{1}, and $\mu_N=\Aut(C,P)$ then the actions of $\mu_N$ on $N_{(C,p)}$ \m{1}{1} and on $T_p \mathcal{C}$ do not necessarily coincide.
\end{remark}
In the following Proposition, we give a description of the normal bundle $N_Z \overline{\mathcal{M}}_{1,n}$ where $Z$ is a twisted sector. We restrict to the cases where the base space of $Z$ (Definition \ref{base}) has dimension $0$, since the automorphism element of all others is involutive. Hence, let $Z$ be in $\{C_4, C_4', C_6,C_6', C_6''\}$. The normal bundle $N_Z \mathcal{M}_{1,k}$ is a $k$-dimensional vector space with an action of $\mu_N$ on it which we studied in Lemma \ref{agemun} ($N$ can be $3,4$ or $6$). For the sake of simplicity, we identify ${Z}^{I_1, \ldots, I_k}$ with $Z \times \overline{\mathcal{M}}_{0,I_1+1} \times \ldots \times \overline{\mathcal{M}}_{0,I_k+1}$ (see Definition \ref{base}). We denote by $p, p_1, \ldots, p_k$ the projections onto each factor.   We indicate by $\underline{\mathbb{C}}$ the trivial bundle of rank $1$.

\begin{proposition} \label{agembun} Let $I_1, \ldots, I_k$ be a partition of $[n]$ in $k \leq 3$ subsets. Suppose that $|I_i| >1$ for all $i$s. Then the normal bundle $N_{Z^{I_1,\ldots,I_k}} \overline{\mathcal{M}}_{1,n}$ is of rank $2k$ and splits as a direct sum of line bundles. \footnote{see last paragraph of Section \ref{sezionegerbe} for a line bundle on a trivial gerbe}
\begin{enumerate}
\item $N_{C_4^{[n]}} \overline{\mathcal{M}}_{1,n}$ is isomorphic to: $(i^2, \underline{\mathbb{C}}) \oplus (i^3, p_1^*(\mathbb{L}_{n+1}^{\vee}))$,
\item $N_{C_4^{I_1,I_2}} \overline{\mathcal{M}}_{1,n}$ is isomorphic to: $(i^2, \underline{\mathbb{C}}) \oplus(i^3, \underline{\mathbb{C}}) \oplus (i^3, p_1^*(\mathbb{L}_{I_1+1}^{\vee}))\oplus (i^3, p_2^*(\mathbb{L}_{I_2+1}^{\vee}))$,
\item $N_{C_6^{[n]}} \overline{\mathcal{M}}_{1,n}$ is isomorphic to: $(\epsilon^4, \underline{\mathbb{C}}) \oplus (\epsilon^5, p_1^*(\mathbb{L}_{n+1}^{\vee}))$,
\item $N_{C_6^{I_1,I_2}} \overline{\mathcal{M}}_{1,n}$ is isomorphic to: $(\epsilon^2, \underline{\mathbb{C}}) \oplus(\epsilon^4, \underline{\mathbb{C}}) \oplus (\epsilon^4, p_1^*(\mathbb{L}_{I_1+1}^{\vee}))\oplus (\epsilon^4, p_2^*(\mathbb{L}_{I_2+1}^{\vee}))$,
\item $N_{C_6^{I_1,I_2,I_3}} \overline{\mathcal{M}}_{1,n}$ is isomorphic to: $(\epsilon^2, \underline{\mathbb{C}}) \oplus(\epsilon^4, \underline{\mathbb{C}})\oplus(\epsilon^4, \underline{\mathbb{C}}) \oplus (\epsilon^4, p_1^*(\mathbb{L}_{I_1+1}^{\vee}))\oplus (\epsilon^4, p_2^*(\mathbb{L}_{I_2+1}^{\vee}))\oplus (\epsilon^4, p_3^*(\mathbb{L}_{I_3+1}^{\vee}))$.
\end{enumerate}
If some of the $I_i$s has cardinality $1$, the normal bundle has the same description after removing the corresponding components $p_i^*(\mathbb{L}_{I_i+1}^{\vee})$.
\end{proposition}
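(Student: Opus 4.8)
The plan is to compute each normal bundle equivariantly by combining Lemma \ref{agemun} with the geometry of the gluing map $j_k$. Recall that $Z^{I_1,\ldots,I_k}$ is the image of $\overline{Z}\times\overline{\mathcal{M}}_{0,I_1+1}\times\cdots\times\overline{\mathcal{M}}_{0,I_k+1}$ under $j_k$, where $Z$ is one of the rigid base sectors $C_4,C_4',C_6,C_6',C_6''$. The tangent space to $\overline{\mathcal{M}}_{1,n}$ at a point of this locus decomposes, via deformation theory of nodal curves, into a piece coming from deformations that preserve the nodes (the tangent to the product $\overline{\mathcal{M}}_{1,k}\times\prod\overline{\mathcal{M}}_{0,I_i+1}$) plus one line for each node that smooths it. The smoothing deformation at the $i$-th node is canonically $T_{\bullet_i}^\vee$ of the genus-$1$ component tensored with $T_{\bullet_i}^\vee$ of the genus-$0$ component, i.e. up to the (equivariantly trivial) genus-$0$ cotangent factor it is $\mathbb{L}_{I_i+1}^\vee$ pulled back via $p_i$ — and this is precisely where the $\psi$-classes enter. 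The normal bundle $N_{Z^{I_1,\ldots,I_k}}\overline{\mathcal{M}}_{1,n}$ is then the quotient of this tangent space by the tangent to the twisted sector itself, which contributes the full genus-$0$ factors (always $\mu_n$-fixed, hence deleted) and the tangent to $\overline{Z}$ inside $\overline{\mathcal{M}}_{1,k}$.

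Concretely, I would argue as follows. First, by Lemma \ref{agemun}, $N_Z\mathcal{M}_{1,k}$ is known as a $\mu_n$-representation; this accounts for the $k$ summands that are equivariantly trivial bundles $(\ast,\underline{\mathbb{C}})$ in each case — e.g. $(i^2,\underline{\mathbb{C}})$ and one copy of $(i^3,\underline{\mathbb{C}})$ for $C_4'$, matching $\langle i^2\rangle\oplus\langle i^3\rangle$. These are constant along the twisted sector since the base $Z$ is rigid, so they pull back to trivial line bundles on the product. Second, each node being smoothed contributes one more line. The $\mu_n$-action on this smoothing line is by the character with which the automorphism acts on the cotangent line to the genus-$1$ component at the node $\bullet_i$: by Notation \ref{canonical} and Theorem \ref{weierstrass} this is $\phi_n$ itself on the first marked point's cotangent line, but for the marked points that were glued one reads off the weight from the Weierstrass action exactly as in the proof of Lemma \ref{agemun} — giving $i^3$ for the $\mu_4$-cases, $\epsilon^5$ for the "big" $\mu_6$-case $C_6^{[n]}$, and $\epsilon^4$ for the $\mu_3$-cases. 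The bundle part of this line, as a bundle over the product, is $p_i^*(\mathbb{L}_{I_i+1}^\vee)$ since the smoothing parameter at a node is dual to the product of the two branch cotangent lines and the genus-$0$ branch contributes a trivial factor. Deleting the $p_i^*(\mathbb{L}_{I_i+1}^\vee)$ summand when $|I_i|=1$ is forced because $\overline{\mathcal{M}}_{0,2+1}$ is a point and the marked point is then not a true node in the universal-curve sense — there is no extra smoothing direction beyond the one already counted in $N_Z\mathcal{M}_{1,k}$.

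The main obstacle, I expect, is bookkeeping the $\mu_n$-characters on the smoothing lines precisely — in particular distinguishing the character on the cotangent line at the \emph{attaching node} from the one at the original marked point of $Z$. For $C_6^{[n]}$ one must get $\epsilon^5$ rather than $\epsilon^4$, reflecting that the automorphism acting on the $y$-variable (weight $3$ in Theorem \ref{weierstrass}, so $\epsilon^3$) combined with the cotangent weight produces $\epsilon^5$ on the smoothing parameter; contrast the $C_6^{I_1,\ldots}$ cases where the sector is the elliptic-involution locus $\overline{A_j}$-type configuration with automorphism $\epsilon^2$ of order $3$, acting as $\epsilon^4$ on each smoothing line. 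I would pin this down by running the same cartesian-diagram argument as in Lemma \ref{agemun}: realize $\overline{\mathcal{M}}_{1,k+1}\to\overline{\mathcal{M}}_{1,k}$ as the universal curve, so the normal direction of a new section splits equivariantly as (pullback of the previous normal bundle) $\oplus$ (tangent to the fiber at the section point), and compute the $\mu_n$-weight on the latter directly in Weierstrass coordinates. Once the weight at a single node is fixed, the gluing map $j_k$ being $\mu_n$-equivariant (with $\mu_n$ acting trivially on the genus-$0$ factors) propagates the computation to all of $Z^{I_1,\ldots,I_k}$, and flatness of $j_k$ along with the projection formula identify the underlying bundles as the stated $p_i^*(\mathbb{L}_{I_i+1}^\vee)$. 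The remaining cases are then immediate by restricting to $k\le 3$ and reading off the table of twisted sectors from Theorem \ref{twistedcompact}.
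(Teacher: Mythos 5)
Your proposal is correct and follows essentially the same route as the paper: you split the normal bundle into the pullback of $N_Z\overline{\mathcal{M}}_{1,k}$ (the equivariantly trivial summands, with characters taken from Lemma \ref{agemun}) plus one smoothing line per node, which is precisely the normal bundle of the gluing map $j_k$ described in Lemma \ref{referenzaimpossibile}, with the $\mu_n$-character carried by the genus-$1$ tangent line at the node and the bundle part equal to $p_i^*(\mathbb{L}_{I_i+1}^{\vee})$. The only slip is that you twice describe the smoothing line as a product of \emph{cotangent} lines and speak of the character on the cotangent line of the genus-$1$ branch; it is the tensor product of the two \emph{tangent} lines (the dual, as you yourself say later), and the weights you list, $i^3$, $\epsilon^5$, $\epsilon^4$, are indeed the tangent-line characters.
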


\noindent We postpone the proof of this proposition, in order to immediately see that as a consequence of it we can compute the age of all the twisted sectors of \mbun. We use the convention that $\delta(I)=\delta_{1,|I|}$, the Kronecker delta. 
\begin {corollary} 
\label{agembuncor} In this table we recollect the age, or degree shifting number, of the twisted sectors of \mbun.
\end {corollary}
\small
\ \\ 
\noindent
\begin {tabular}{|c|c|c|c|c|} \hline Component& Aut& Codimension& Age \\ \hline \hline
\azero & $-1$ &  $1- \delta(n)$ & $\frac{1}{2}(1- \delta(n))$ \\
\auno$^{I_1,I_2}$ & $-1$ &  $3- \delta(I_1) - \delta(I_2)$ & $\frac{1}{2}(3- \delta(I_1)- \delta(I_2) )$ \\
\adue$^{I_1,I_2,I_3}$ & $-1$ & $5- \delta(I_1) - \delta(I_2) -\delta(I_3)$ & $\frac{1}{2}(5- \delta(I_1)-\delta(I_2)- \delta(I_3) )$ \\
\atre$^{I_1,I_2,I_3,I_4}$ & $-1$ &  $7- \delta(I_1) - \delta(I_2) - \delta(I_3) -\delta(I_4)$ & $\frac{1}{2}(7- \delta(I_1)- \delta(I_2)-\delta(I_3)  -\delta(I_4) )$ \\
\hline
$C_4^{[n]}$ & $i$ & $2- \delta(n)$ & $\frac{5}{4} - \frac{3}{4} \delta(n)$ \\
$C_4^{[n]}$ & $-i$ & $2- \delta(n)$ & $\frac{3}{4} - \frac{1}{4} \delta(n)$ \\
$C_4^{I_1,I_2}$ & $i$ &  $4-\delta(I_1)-\delta(I_2)$ & $\frac{11}{4}- \frac{3}{4}(\delta(I_1) + \delta(I_2)) $\\

$C_4^{I_1,I_2}$ & $-i$ & $4-\delta(I_1)-\delta(I_2)$ & $\frac{5}{4}- \frac{1}{4}(\delta(I_1) +\delta(I_2)) $\\
\hline
$C_6^{I_1,I_2}$ &$\epsilon^2$&$4- \delta(I_1)-\delta(I_2)$ & $\frac{7}{3}- \frac{2}{3}(\delta(I_1)+\delta(I_2))$ \\

$C_6^{I_1,I_2}$ &$\epsilon^4$&$4- \delta(I_1)-\delta(I_2)$ & $\frac{5}{3}- \frac{1}{3}(\delta(I_1)+\delta(I_2))$ \\
$C_6^{I_1,I_2,I_3}$ & $ \epsilon^2$ &  $6- \delta(I_1)- \delta(I_2) - \delta(I_3)$ & $\frac{11}{3}- \frac{2}{3}(\delta(I_1)+\delta(I_2)+\delta(I_3))$ \\
$C_6^{I_1,I_2,I_3}$ & $ \epsilon^4$ &  $6- \delta(I_1)- \delta(I_2) - \delta(I_3)$ & $\frac{7}{3}- \frac{1}{3}(\delta(I_1)+\delta(I_2)+\delta(I_3))$ \\
\hline
$C_6^{[n]}$ &$\epsilon$& $2 - \delta(n)$& $\frac{3}{2}- \frac{5}{6} \delta(n)$ \\
$C_6^{[n]}$ &$\epsilon^2$& $2- \delta(n)$& $1 - \frac{2}{3} \delta(n)$ \\
$C_6^{[n]}$ &$\epsilon^4$& $2- \delta(n)$& $1- \frac{1}{3} \delta(n)$ \\
$C_6^{[n]}$ &$\epsilon^5$& $2- \delta(n)$& $\frac{1}{2} - \frac{1}{6} \delta(n)$ \\
\hline
\end {tabular}
\normalsize
\ \\
\begin {proof} The age of the sectors with an involutive automorphism is half the codimension by Lemma \ref{codimension}. They are all the twisted sectors whose associated base twisted sector (\ref{base}) has dimension $1$. For the remaining twisted sectors
, we simply apply Proposition \ref{agembun}.

\end {proof}

To prove Proposition \ref{agembun}, we use the following result due to Mumford:

\begin {lemma}  (\cite{mumford}) \label{referenzaimpossibile} Let $I_1, \ldots, I_k$ be a partition of $[n]$, and $j_k$ the gluing map defined in Section \ref{section2b}
\begin{displaymath}
j_k:\overline{\mathcal{M}}_{1, \coprod_{i=1}^k \bullet_i} \times \overline{\mathcal{M}}_{0,I_1 \sqcup \bullet_1} \times \ldots \times \overline{\mathcal{M}}_{0,I_k \sqcup \bullet_k} \to \overline{\mathcal{M}}_{1,n}.
\end{displaymath}
Let $p$ be the projection onto the first factor, and $p_1, \ldots, p_k$ the projections onto the moduli spaces of genus $0$ curves. Then the normal bundle of the map $j_k$ is isomorphic to:
\begin{displaymath}
N_{j_k}=\bigoplus_{1=1}^{k} p^*(\mathbb{L}_{\bullet_i}^{\vee}) \otimes p_i^*(\mathbb{L}_{\bullet_i}^{\vee}),
\end{displaymath}
where $\mathbb{L}_{\bullet_i}$ are the cotangent line bundles defined in Section \ref{section2b}, the first one is on $\overline{\mathcal{M}}_{1, \coprod_{i=1}^k \bullet_i}$, and the second one is on $\overline{\mathcal{M}}_{0,I_i \sqcup \bullet_i}$ 
\end {lemma}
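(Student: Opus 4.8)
The statement is the classical description of the normal bundle (equivalently, conormal bundle) of a boundary gluing map in terms of $\psi$-classes at the glued points. The plan is to reduce to the single-node case by a standard induction on $k$, and then to identify the normal bundle of the one-node gluing morphism with the tensor product of the two cotangent lines at the branches of the node, using deformation theory of nodal curves.

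\emph{Step 1: the single-node case.} First I would treat the prototypical gluing map attaching one genus $0$ tail, i.e. the case $k=1$ is trivial, so really the model is a map of the form $\overline{\mathcal{M}}_{1,A\sqcup\bullet}\times\overline{\mathcal{M}}_{0,B\sqcup\star}\to\overline{\mathcal{M}}_{1,n}$ gluing $\bullet$ to $\star$. Let $\nu\colon C'\to C$ be the partial normalization at the node $q$. The key input is the standard exact sequence comparing deformations/obstructions of $C$ with those of $(C',\text{two points})$: the cokernel of the map on tangent spaces, i.e. the normal space to the image of the gluing map at a point $[C]$, is canonically $T_{q_1}C_1\otimes T_{q_2}C_2$, the tensor product of the tangent lines to the two branches at the node. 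This is exactly Mumford's computation of the deformation of a node ``smoothing the node'' direction, and it globalizes over the moduli stack to give $N_j \cong \mathbb{L}_{\bullet}^{\vee}\boxtimes\mathbb{L}_{\star}^{\vee}$ after pulling back by the two projections, since $\mathbb{L}_{\bullet}=s_{\bullet}^*\omega$ has fiber $T_q^\vee$. I would cite \cite{mumford} and the standard references (this is also in Arbarello--Cornalba--Griffiths or Harris--Morrison) for the deformation-theoretic identification.

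\emph{Step 2: induction on the number of nodes.} The map $j_k$ factors as a composition of $k$ single-node gluing maps (glue one bullet at a time), or equivalently as a fiber product of $k$ such maps over $\overline{\mathcal{M}}_{1,\coprod\bullet_i}$ since the $k$ nodes are disjoint and the smoothing parameters are independent. Because the nodes lie on disjoint parts of the curve, the normal bundle of the composite is the direct sum of the (pullbacks of the) normal bundles of the individual gluing maps; concretely, $N_{j_k}=\bigoplus_i \pi_i^* N_{(\text{$i$-th node glue})}$, and each summand is $p^*(\mathbb{L}_{\bullet_i}^\vee)\otimes p_i^*(\mathbb{L}_{\bullet_i}^\vee)$ by Step 1. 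One has to check the images are immersed and that the excess/normal bundles add up without correction terms — this holds precisely because distinct nodes contribute independent first-order smoothing directions, so there is no intersection among the loci and the conormal sequences split. I would phrase this as: $\mathcal{M}_{1,n}$ near the generic point of the image of $j_k$ looks étale-locally like a product of $k$ one-node degenerations times the equisingular deformations, giving the claimed direct sum decomposition.

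\emph{Main obstacle.} The only real subtlety is bookkeeping of the $S_n$- and automorphism-equivariance and making sure the identification $N_j\cong\mathbb{L}^\vee\boxtimes\mathbb{L}^\vee$ is canonical (not just an abstract isomorphism), since later sections use this bundle equivariantly under the $\mu_N$-actions. The deformation-theoretic description is manifestly functorial, so equivariance comes for free, but I would state it carefully. A secondary, purely notational, annoyance is that $\mathbb{L}_{\bullet_i}$ denotes two different line bundles on two different factors (as the footnote warns), so I would be explicit about which pullback is meant in each summand. Beyond that, the proof is a citation of Mumford's node-smoothing computation plus the elementary observation that disjoint nodes deform independently; no hard calculation is required.
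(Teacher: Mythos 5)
Your proposal is correct, and it is essentially the argument the paper implicitly relies on: the paper gives no proof at all for this lemma, stating it as a classical result attributed to Mumford \cite{mumford}, and your two steps (the identification of the first-order smoothing direction of a single node with $T_{q_1}\otimes T_{q_2}$, globalized as $\mathbb{L}_{\bullet}^{\vee}\boxtimes\mathbb{L}_{\bullet}^{\vee}$, followed by the observation that distinct nodes smooth independently so the normal bundle splits as a direct sum) are exactly the content of that citation. In effect you have supplied the standard deformation-theoretic proof that the paper omits, with the right attention to the canonicity/equivariance point that the later $\mu_N$-equivariant computations require.
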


\begin{proof} (of Proposition \ref{agembun})
If $j_k$ denotes as usual the gluing morphism defined in Section \ref{section2b}, the following diagram is cartesian by definition of $Z^{I_1, \ldots, I_k}$ ($p$ is the projection onto the first factor)
\begin{displaymath}
\xymatrix{{Z}^{I_1,\ldots,I_k} \ar[d]^{p} \ar@{^{(}->}[rr]^{\hspace{-2cm}\lambda} \ar@{}|{\square}[drr]&&  \mathcal{\overline{M}}_{1,k} \times \mathcal{\overline{M}}_{0,I_1 \sqcup \bullet_1} \times\ldots \times \mathcal{\overline{M}}_{0,I_k \sqcup \bullet_k} \ar[d]^{p} \ar@{^{(}->}[rr]^{\hspace{2cm}j_k} && \overline{\mathcal{M}}_{1,n} \\
{Z} \ar@{^{(}->}[rr] &&\mathcal{\overline{M}}_{1,k}. &&}
\end{displaymath}
In this case the following isomorphism holds
\begin{displaymath}p^*(N_{Z}\overline{\mathcal{M}}_{1,k}) \cong N_{Z^{I_1,\ldots,I_k}}  \mathcal{\overline{M}}_{1,k} \times \mathcal{\overline{M}}_{0,I_1 \sqcup \bullet_1} \times\ldots \times \mathcal{\overline{M}}_{0,I_k \sqcup \bullet_k} = N_{\lambda} \end{displaymath} since the diagram is cartesian and $p$ is flat. This is the trivial bundle with a certain representation of $\mu_N$, that we studied in Lemma \ref{agemun}. 

The map $j_k \circ \lambda$ is the restriction to $\overline{Z}^{I_1, \ldots, I_k}$ of the map $I(\overline{\mathcal{M}}_{1,n}) \to \overline{\mathcal{M}}_{1,n}$. Therefore, the normal bundle $N_{{Z}^{I_1, \ldots, I_k}} \overline{\mathcal{M}}_{1,n}$ is isomorphic to the direct sum: $\lambda^* N_{j_k} \oplus N_{\lambda}$. To conclude the proof, we have now to study the first term $\lambda^* N_{j_k}$.

According to Lemma \ref{referenzaimpossibile}, we have:
\begin{displaymath}
\lambda^*N_{j_k}= \left(\bigoplus_{1=1}^{k} p^*(\mathbb{L}_{\bullet_i}^{\vee}) \otimes p_i^*(\mathbb{L}_{\bullet_i}^{\vee}) \right)_{|Z^{I_1, \ldots, I_k}}.
\end{displaymath}
The term $ p^*(\mathbb{L}_{\bullet_i}^{\vee})_{|Z^{I_1, \ldots, I_k}}$ is the trivial bundle whose constant fiber is the tangent space to $Z$ in the $i$-th marked point. It carries the representation of $\mu_N$, as the latter group acts on the tangent space to $Z$ in the $i$-th marked point (this action can be computed explicitly, see Lemma \ref{agemun}). On the other hand, the term $p_i^*(\mathbb{L}_{\bullet_i}^{\vee})_{|Z^{I_1, \ldots, I_k}}$ carries the trivial representation of $\mu_N$, but it is non trivial as a line bundle. This last observation concludes the proof.
\end{proof}

We can now write a formula analogous to \eqref{samuel}, adding a new variable to separate the different degrees in the Chen--Ruan cohomology of \mb{1}{n}. We define:
\begin{eqnarray} \label{seriepotenze} P_0(s,t):=\sum_{n=0}^{\infty}\frac{Q_0(n,m)}{n!} s^n t^m, \\ P_1(s,t):=\sum_{n=0}^{\infty}\frac{Q_1(n,m)}{n!}s^n t^m , \\ 
P_{1, \alpha}^{CR}(s,t):=\sum_{n=0}^{\infty}\frac{Q_{1, \alpha}^{CR}(n,m)}{n!}s^n t^m,
\end{eqnarray}
\noindent where
\begin{eqnarray*} 
Q_0(n,m):=\dim H^{2m}(\overline{\mathcal{M}}_{0,n+1})=a^m(n), \\ 
Q_1(n,m):=\dim H^{2m}(\overline{\mathcal{M}}_{1,n}), \\ 
Q_{1, \alpha}^{CR}(n):=\dim H^{2m+ \alpha}_{CR}(\overline{\mathcal{M}}_{1,n}).
\end{eqnarray*}

\noindent The relevant values of $\alpha$ are $\{0, \frac{1}{4}, \frac{1}{3}, \frac{1}{2}, \frac{2}{3}, \frac{3}{4}\}=: A$. Summing over $\alpha$ and shifting by $\alpha$ the degree of $t$, we have the generating series of the orbifold Poincar\'e polynomials:\footnote{Note that the orbifold Poincar\'e polynomials are, strictly speaking, not polynomials.} 
\begin{displaymath}
P_{1}^{CR}(s,t):= \sum_{\alpha \in A} t^{\alpha} P_{1, \alpha}^CR (s,t).
\end{displaymath} 
In other words, the coefficient of degree $n$ in the variable $s$ of $P_{1}^{CR}$ is the orbifold Poincar\'e polynomial of $\overline{\mathcal{M}}_{1,n}$ divided by $n!$. The power series \eqref{seriepotenze} are described in \cite[Theorem 5.9]{getzleroperads}, \cite[Theorem 2.6]{getzler2}. There, the author describes the cohomology of the moduli of genus $0$ and genus $1$, $n$-pointed stable curves as an $S_n$-representation. 

\begin {theorem} \label{poincare1} The following equalities between power series relate the dimension of the $m$-th Chen--Ruan cohomology group of $\overline{\mathcal{M}}_{1,n}$ ($n>4$) with the Betti numbers of the moduli of pointed stable curves of genus $0$ and $1$. 
\begin{eqnarray*} 
P_{1, 0}^{CR}(s,t)&=&P_1 + (t + t^2) P_0+ 3 s (t^2+t^3) P_0^2+ \frac{s}{24} (t^3+ t^4) P_0^3+ \\ & & 2 s (t+t^2) \frac{\partial}{\partial s} (s P_0) +  \frac{s^3}{6} (t^2+t^3) \frac{\partial^3}{\partial s^3}(s^3 P_0)\\
P_{1, \frac{1}{4}}^{CR}(s,t)&=& t P_0 + \frac{t^2}{2} P_0^2\\
P_{1, \frac{1}{3}}^{CR}(s,t)&=&\frac{t^2}{2}P_0^2+\frac{t^2}{6} P_0^3+ t s \frac{\partial}{\partial s} (s P_0)+ \frac{s^2 t^2}{2} \frac{\partial^2}{\partial s^2}(s^2 P_0)\\
P_{1, \frac{1}{2}}^{CR}(s,t)&=&2 (1+t) P_0+ \frac{t+t^2}{2} P_0^2 + \frac{t^2+t^3}{6} P_0^3+ \frac{t^3+t^4}{24}P_0^4+ \\ & & +\frac{s^2}{2} (t+t^2) \frac{\partial^2}{\partial s^2}(s^2 P_0) \\
P_{1, \frac{2}{3}}^{CR}(s,t)&=&\frac{t}{2} P_0^2 + \frac{t^3}{6} P_0^3 + s t \frac{\partial}{\partial s} (s P_0)+ \frac{s^2}{2} t \frac{\partial^2}{\partial s ^2} (s^2 P_0)\\
P_{1, \frac{3}{4}}^{CR}(s,t)&=&  P_0 + \frac{t}{2} P_0^2\\
\end{eqnarray*}
\end{theorem}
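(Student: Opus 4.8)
The plan is to read the Chen--Ruan Poincaré polynomials straight off the sector decomposition. By the definition of $H^{*}_{CR}$ as a graded vector space one has $H^{d}_{CR}(\overline{\mathcal{M}}_{1,n},\mathbb{Q})=\bigoplus_{i}H^{d-2a(X_{i},g_{i})}(X_{i},\mathbb{Q})$, so a single twisted sector $(X_{i},g_{i})$ contributes to the generating series (in the variable $t$ recording half the cohomological degree) the polynomial $t^{a(X_{i},g_{i})}$ times the ordinary Poincaré polynomial of $X_{i}$; note every $X_{i}$ has only even rational cohomology. I would then feed in the three structural facts already established: Theorem \ref{twistedcompact} enumerates the twisted sectors of every $\overline{\mathcal{M}}_{1,n}$, Corollary \ref{fondamentale} identifies each with a product $A\times\overline{\mathcal{M}}_{0,n_{1}}\times\cdots\times\overline{\mathcal{M}}_{0,n_{4}}$, and Corollary \ref{agembuncor} records all the ages. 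Since rational cohomology is multiplicative on products, $H^{*}(B\mu_{N},\mathbb{Q})=\mathbb{Q}$, and $\mathbb{P}(a,b)$ has the rational cohomology of $\mathbb{P}^{1}$ hence Poincaré polynomial $1+t$, each sector's contribution becomes $t^{\mathrm{age}}$, multiplied by $1+t$ exactly when the base twisted sector is one--dimensional, and by $\prod_{j}\mathrm{Poin}(\overline{\mathcal{M}}_{0,n_{j}+1},t)$.

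Next I would organise the infinitely many twisted sectors (over all $n$) into finitely many families, one for each pair $(Z,\alpha)$ with $Z$ one of $\overline{A_{1}},\dots,\overline{A_{4}},C_{4},C_{4}',C_{6},C_{6}',C_{6}''$ and $\alpha$ an admissible automorphism. A term of such a family is a partition of $[n]$ into $k$ nonempty legs, $k$ being the number of marked points of $Z$, which by Notation \ref{doppianota} may be taken unordered, the $j$-th leg carrying the markings on the $j$-th genus--$0$ tail. The exponential generating function calculus then turns the sum over legs into $\tfrac{1}{k!}\,t^{\mathrm{age}_{\mathrm{core}}}\,\ell(s,t)^{k}$ --- times $1+t$ for the $\overline{A_{j}}$--bases, which are the one--dimensional ones --- where $\ell(s,t)$ is the one--leg series and $s^{m}$ corresponds to a leg isomorphic to $\overline{\mathcal{M}}_{0,m+1}$. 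The only delicate point in writing $\ell$ is the $\delta$--correction of Corollary \ref{agembuncor}: a leg carrying a single marking is the point $\overline{\mathcal{M}}_{0,2}$ rather than a positive--dimensional $\overline{\mathcal{M}}_{0,m+1}$, and simultaneously the age of the whole sector drops, so $\ell$ agrees with $P_{0}$ only in the monomials $s^{m}$ with $m\ge 2$ while its $s$--coefficient is $t$--shifted. Expanding $\ell^{k}$ and re--expressing the result through $P_{0}$ is exactly where the low--order terms $1+s$ of $P_{0}$ and the operators $\tfrac{\partial}{\partial s}(sP_{0})$, $\tfrac{\partial^{2}}{\partial s^{2}}(s^{2}P_{0})$, $\tfrac{\partial^{3}}{\partial s^{3}}(s^{3}P_{0})$ enter: they package the singleton corrections together with the choices of which markings land on the degenerate tails.

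Finally I would add the untwisted sector, which contributes exactly $P_{1}(s,t)$, gather the contributions of all families, and sort them by the fractional part of the age, which by Corollary \ref{agembuncor} is always one of the six numbers in $A$; reading off the six graded pieces produces the six identities. The hard part is not conceptual but combinatorial book-keeping: a single family is typically spread over two or three of the six $\alpha$--graded pieces, because different $\delta$--patterns alter both the integer and the fractional part of the age, so one must track precisely which piece receives which contribution, get every symmetry factor $1/k!$ and every age shift right, and recognise the genus--$0$ and genus--$1$ generating--function identities of \cite[Theorem 5.9]{getzleroperads} and \cite[Theorem 2.6]{getzler2} that consolidate the accumulated corrections into closed form. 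As a sanity check one specialises to $1\le n\le 4$, as in the final section, and verifies that the six pieces recover formula \ref{samuel} when one sets $t=1$.
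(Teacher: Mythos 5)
Your plan is exactly the derivation the paper relies on (it prints no proof of Theorem \ref{poincare1}): the sector list of Theorem \ref{twistedcompact}, the product structure of Corollary \ref{fondamentale}, the age table, and exponential-generating-function bookkeeping over unordered partitions, sorted by the fractional part of the age. One caution as you carry out the bookkeeping: the rows $\overline{A_1}^{[n]}$, $C_4^{[n]}$, $C_6^{[n]}$ of Corollary \ref{agembuncor} carry no $\delta$-corrections and are only valid for $n\ge 2$, so for $n=1$ (where the unique leg is a singleton) you must take the ages from Corollary \ref{inertiam11} and Lemma \ref{agemun} (namely $0$, $\tfrac12$, $\tfrac13$, $\tfrac23$ rather than $\tfrac12$, $\tfrac54/\tfrac34$, $\tfrac32/1/\tfrac12$), otherwise the $s^1$-coefficients of the six series come out wrong.
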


\section {The second inertia stack}

The definition of the Chen--Ruan product involves the second inertia
stack.

\begin {definition} Let $X$ be an algebraic stack. The \emph{second
inertia stack} $I_2(X)$ is defined as:
\begin{displaymath} I_2(X)=I(X) \times_X I(X).\end{displaymath}
\end {definition}

\begin{remark} \label{liscezza2} Like the inertia stack, the second inertia stack is smooth (cfr. \ref{liscezza1}, see also \cite[p. 15]{agv1}, after noticing that $\mathcal{K}_{0,3}(X,0) \cong I_2(X)$).
\end{remark}

\noindent A point in $I_2(X)$ is a triplet
$(x,g,h)$ where $x$ is a point of $X$ and $g, h \in$ Aut$(x)$. It can equivalently be given as $(x,g,h, (g h)^{-1})$.

\begin {remark} \label{doppinerzia} The second inertia stack comes with three natural morphisms to
$I(X)$: $p_1$ and $p_2$, the two projections of the fiber product,
and $p_3$ which acts on points sending $(x,g,h)$ to $(x,g h)$.
This gives the following diagram, where $(Y,g,h,(gh)^{-1})$ is a double twisted sector and $(X_1,g)$, $(X_2,h)$, $(X_3, (gh))$ are twisted sectors:
\begin {displaymath}  \xymatrix{ & (X_1,g)\\
(Y,g,h) \ar@/^/[ur]^{p_1} \ar[r]^{p_2} \ar@/_/[dr]^{p_3}& (X_2,h)\\
& (X_3,g h).\\}
\end {displaymath}
\end {remark}

Let us now study the double twisted sectors in the case when $X=$\mbun. From now on we focus on the compact case, since the case of \mun \ follows through analogously and much more simply. The following follows easily from the fact that all isotropy groups of \mbun \ are cyclic (cf. also Remark \ref{closedsubstack}).

\begin {proposition} \label{doublesingle} Let $(Z,g,h,(gh)^{-1})$ be a double twisted sector of \mbun. Then either $(Z,g)$ or $(Z,h)$ or $(Z, (gh)^{-1})$ is a twisted sector of the inertia stack of \mbun.
\end {proposition}

\begin {remark} \label{lista0} We label each sector of $I_2(X)$ via the triplet
$(g,h,(g h)^{-1})$. There are two automorphism groups acting on
$I_2(X)$: an involution sending a sector labeled with $(g,h, (g
h)^{-1})$ into $(g^{-1},h^{-1},(g h))$, and $S_3$ which permutes the
three automorphisms. Up to permutations and involution, the
following are all the possible labels of the sectors in $I_2(X)$ that correspond to non-empty connected sectors:
\begin{itemize}
\item $(1,1,1)$, generated group $\mu_1$; \item $(1,-1,-1)$,
generated group $\mu_2$; \item
$(\epsilon^2,\epsilon^2,\epsilon^2)$, generated group
$\mu_3$;\item $(1,\epsilon^2,\epsilon^4)$, generated group
$\mu_3$; \item $(1,i,-i)$ generated group $\mu_4$; \item
$(i,i,-1)$, generated group $\mu_4$; \item
$(\epsilon,\epsilon,\epsilon^4)$, generated group $\mu_6$;\item
$(\epsilon,\epsilon^2,-1)$, generated group $\mu_6$;\item
$(1,\epsilon,\epsilon^5)$, generated group $\mu_6$.
\end{itemize}

\end {remark}

\noindent We now describe the sectors of the double inertia stack.
We do so up to the automorphisms described in the previous remark,
and up to the permutations of the marked points. The next proposition is easily obtained after Theorem \ref{twistedcompact}.

\begin{proposition} \label{doubletwisted} Up to permutation of
the automorphisms, and up to involution, the following are the twisted sectors of
$I_2(\overline{\mathcal{M}}_{1,n})$:

\begin{displaymath}
\left(\textrm{\azero}, (1,-1,-1)\right),\quad
\left(\textrm{\auno}^{I_1,I_2}, (1,-1,-1)\right), \quad
\left(\textrm{\adue}^{I_1,I_2,I_3}, (1,-1,-1)\right)\end{displaymath} \begin{displaymath}
\left(\textrm{\atre}^{I_1,I_2,I_3,I_4}, (1,-1,-1)\right), \quad
 \left(C_6^{I_1,I_2}, (1, \epsilon^2,
\epsilon^4)/(\epsilon^2,\epsilon^2,\epsilon^2)\right) \end{displaymath} \begin{displaymath} \left(C_6^{I_1,I_2,I_3}, (1, \epsilon^2,
\epsilon^4)/(\epsilon^2,\epsilon^2,\epsilon^2)\right), \quad
\left(C_4^{[n]}, (1,i,-i)/(i,i,-1)\right) \end{displaymath} \begin{displaymath}
\left(C_4^{I_1,I_2}, (1,i,-i)/(i,i,-1)\right),
 \quad \left(C_6^{[n]},
(1,\epsilon,\epsilon^5)/(\epsilon,\epsilon,\epsilon^4)/(\epsilon,\epsilon^2,-1)/(\epsilon^2,\epsilon^2,\epsilon^2)\right),
\end{displaymath}
where $I_1,I_2,I_3,I_4$ form a partition of $[n]$ in non-empty subsets.
\end {proposition}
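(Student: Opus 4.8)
The plan is to obtain $I_2(\overline{\mathcal{M}}_{1,n})$ by feeding the complete description of $I(\overline{\mathcal{M}}_{1,n})$ from Theorem \ref{twistedcompact} into the identity $I_2(X)=I(X)\times_X I(X)$, where I write $X=\overline{\mathcal{M}}_{1,n}$. A point of $I_2(X)$ is a triple $(x,g,h)$ with $g,h\in\Aut(x)$, and its connected component is supported on the locus of $X$ where both $g$ and $h$ are defined, that is, on the locus fixed by the subgroup $\langle g,h\rangle\subseteq\Aut(x)$. The key point is that every automorphism group of a stable genus $1$ pointed curve is cyclic (recalled in the Introduction), so $\langle g,h\rangle=\langle k\rangle$ for a single $k$ and the locus fixed by $\langle g,h\rangle$ is the locus fixed by $k$. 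Hence every connected component of $I_2(X)$ has the same underlying substack as one of the twisted sectors $(Z,k)$ of Theorem \ref{twistedcompact}; this is precisely the assertion, announced at the start of the section, that every double twisted sector is canonically isomorphic to a twisted sector. What then remains is the combinatorial task of deciding, for each substack $Z$ occurring there, which pairs $(g,h)$ of elements of its generic stabilizer it carries, and recording $g$, $h$ and $gh$ through the three projections $p_1,p_2,p_3$ of Remark \ref{doppinerzia}.

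To carry this out I would write $I_2(X)=\coprod_{\alpha,\beta}I_\alpha\times_X I_\beta$, with $I_\alpha$ running over the connected components of $I(X)$ listed in Theorem \ref{twistedcompact} (including the untwisted one $(\overline{\mathcal{M}}_{1,n},1)$). Since each $I_\alpha\to X$ is a closed embedding (Remark \ref{closedsubstack}) and, by Corollary \ref{fondamentale}, each $I_\alpha$ is a product $A\times\overline{\mathcal{M}}_{0,n_1}\times\cdots\times\overline{\mathcal{M}}_{0,n_4}$ with $A\in\{B\mu_3,B\mu_4,B\mu_6,\mathbb{P}(4,6),\mathbb{P}(2,4),\mathbb{P}(2,2)\}$, every nonempty fibre product $(Z_1,g)\times_X(Z_2,h)$ is supported on $Z_1\cap Z_2$, which is again one of the twisted-sector substacks of Corollary \ref{fondamentale}, now decorated with the pair $(g,h)$. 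Running over all $\alpha,\beta$ and then passing to orbits under the $S_3$-action permuting $p_1,p_2,p_3$ and under the involution $(g,h)\mapsto(g^{-1},h^{-1})$, the surviving labels are exactly the nine of Remark \ref{lista0}. Sorting them by the cyclic group $\langle g,h\rangle$ identifies the supporting substacks: $\mu_1$ is the untwisted sector and is dropped; $\mu_2$ (label $(1,-1,-1)$) forces $A\in\{\mathbb{P}(4,6),\mathbb{P}(2,4),\mathbb{P}(2,2)\}$, giving the four sectors \azero, $\overline{A_2}^{I_1,I_2}$, $\overline{A_3}^{I_1,I_2,I_3}$, $\overline{A_4}^{I_1,I_2,I_3,I_4}$; $\mu_4$ (labels $(1,i,-i)$ and $(i,i,-1)$) forces $A=B\mu_4$, giving $C_4^{[n]}$ and $C_4^{I_1,I_2}$; $\mu_3$ (labels $(1,\epsilon^2,\epsilon^4)$ and $(\epsilon^2,\epsilon^2,\epsilon^2)$) forces $A\in\{B\mu_3,B\mu_6\}$, giving the $C_6$-sectors with two or three genus $0$ tails; and $\mu_6$ (the remaining labels) forces $A=B\mu_6$, giving $C_6^{[n]}$. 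This reproduces the statement.

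For completeness — that there are no further components and that the listed ones are exactly as stated — I would argue by induction on $n$ exactly as in the proof of Theorem \ref{twistedcompact}, using that $\pi_{[n]}\colon\overline{\mathcal{M}}_{1,n+1}\to\overline{\mathcal{M}}_{1,n}$ is the universal curve: a component of $I_2(\overline{\mathcal{M}}_{1,n+1})$ either is pulled back from a component of $I_2(\overline{\mathcal{M}}_{1,n})$ along a section landing in the part of the fibre fixed by $(g,h)$, or is produced by placing the $(n+1)$-st marked point at one of the finitely many points of the fibre fixed by $\langle g,h\rangle$, which creates exactly one extra genus $0$ tail and hence one more block in the partition. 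The base of the induction, $I_2(\overline{\mathcal{M}}_{1,1})$, is read off from Corollary \ref{inertiam11} together with the identifications $C_4\cong B\mu_4$, $C_6\cong B\mu_6$ of Corollary \ref{fondamentale}, which make the relevant fibre products such as $B\mu_m\times_{\overline{\mathcal{M}}_{1,1}}B\mu_m$ trivial to compute. Smoothness of every component is automatic by Remark \ref{liscezza2}.

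The main obstacle is the bookkeeping in the second step rather than any geometric difficulty. One must check that the many pieces $I_\alpha\times_X I_\beta$ neither create spurious components nor get identified unexpectedly: for instance the label $(i,i,-1)$ arises both from $(C_4^{[n]},i)\times_X(C_4^{[n]},i)$ and from $(\overline{\mathcal{M}}_{1,n},-1)\times_X(C_4^{[n]},i)$, and one has to recognize these as a single $S_3$-orbit rather than two sectors; and one has to match each cyclic-group label with the precise set of partitions of $[n]$ that supports it, which rests on the geometry of the $\mu_3$- and $\mu_6$-fixed points of $\mathcal{C}_6$ and of the $2$-torsion of a general elliptic curve worked out in Notation \ref{notsmooth} and Lemma \ref{aicompact} (one extra fixed point of $\langle g,h\rangle$ for each extra $\overline{\mathcal{M}}_{0,\bullet}$-factor). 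Everything else — the product structure of the sectors, the identification of $A$ with a gerbe or a weighted projective stack, and the computation of $gh$ — is immediate from Sections $3$ and $4$.
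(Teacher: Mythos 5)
Your proposal is correct and takes essentially the same route as the paper: the paper's proof is the single observation that the list follows from Theorem \ref{twistedcompact} once one notes that no point of $\overline{\mathcal{M}}_{1,n}$ is stabilized by both $i$ and $\epsilon$ (equivalently, all stabilizers are cyclic of order dividing $4$ or $6$), which is precisely the cyclicity-of-$\langle g,h\rangle$ argument you spell out via the fibre-product decomposition and Remark \ref{lista0}. Your closing induction on $n$ is just an expanded rerun of the proof of Theorem \ref{twistedcompact} and does not change the approach.
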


\section {The excess intersection bundle}
\subsection{Definition of the Chen--Ruan product}
We review the definition of the excess intersection bundle over
$I_2(X)$, for $X$ an algebraic smooth stack. Let $(Y,g,h, (g h)^{-1})$ be a twisted sector of
$I_2(X)$. Let $H:= \langle g, h \rangle$ be the group
generated by $g$ and $h$. 

\begin{construction} \label{costruzione} \emph{
Let $\gamma_0, \gamma_1, \gamma_{\infty}$ be three small loops around $0, 1, \infty \subset \mathbb{P}^1$. Any map $\pi_1(\mathbb{P}^1 \setminus \{0,1, \infty\}) \to H$ corresponds to an $H$-principal bundle on $\mathbb{P}^1 \setminus \{0,1, \infty\}$.
Let $\pi^0 : C^0 \rightarrow \mathbb{P}^1 \setminus \{0, 1 , \infty\}$
be the $H$-principal bundle which corresponds to the map $\gamma_0 \to g, \gamma_1 \to h, \gamma_{\infty} \to (gh)^{-1}$. It can be uniquely extended to a ramified $H$-Galois covering $C \to \mathbb{P}^1$ (see \cite[Appendix]{fantechigottsche}), where $C$ is a smooth compact curve. Note that $H$ acts on $C$ (and, by definition, $\mathbb{P}^1$ is the quotient $C/H$ as varieties), and hence on $H^1(C, \mathcal{O}_C)$.
}\end{construction}

Let $f:Y \to X$ be the restriction of the canonical map $I_2(X) \to X$ to the twisted sector $Y$. Note that $H$ acts on $f^*(T_X)$.

\begin{definition} \cite{chenruan} \label{eccesso} With the same notation as in the previous
paragraph, the \emph{excess intersection bundle} over $Y$ is defined as:
\begin{displaymath}
E_Y= \left(H^1(C, \mathcal{O}_C) \otimes_{\mathbb{C}} f^*(T_X) \right)^H,
\end{displaymath}
\emph{i.e.} the $H$-invariant subbundle of the expression between parenthesis.
\end{definition}

\begin{remark} Since $H^1(C, \mathcal{O}_C)^H=0$, it is the same
to consider in the previous definition:
\begin{displaymath}
\left(H^1(C, \mathcal{O}_C) \otimes N_YX \right)^H,
\end{displaymath}
where $N_Y$ is the coker of $T_Y \to f^*(T_X)$. 
\end{remark}
 We now review the definition of the Chen--Ruan product.

\begin{definition} \label{orbprod} Let $\alpha \in H^*_{CR}(X)$, $\beta \in
H^*_{CR}(X)$. We define:
\begin{displaymath}
\alpha *_{CR} \beta =p_{3 *} \left( p_1^*(\alpha) \cup
p_2^*(\beta) \cup c_{top}(E) \right).
\end{displaymath}

\end{definition}

\begin {theorem}\label{prodotto} (\cite{chenruan}) With the age grading defined in the previous section, $(H^*_{CR}(X, \mathbb{Q}), *_{CR})$ is a graded $(H^*(X,\mathbb{Q}), \cup)$-algebra. 
\end {theorem}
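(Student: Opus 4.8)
The plan is to verify that $(H^*_{CR}(X,\mathbb{Q}), *_{CR})$ satisfies the axioms of a graded algebra over $(H^*(X,\mathbb{Q}),\cup)$: namely that the product respects the age grading, that it is associative, and that it is a module over the ordinary cohomology ring with the untwisted sector acting as the identity. Since the excerpt asks us to assume earlier results, the heart of the matter is really the grading compatibility and the module structure; full associativity is classically due to Chen--Ruan (and Abramovich--Graber--Vistoli) and we may cite it.

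First I would check the \emph{grading}. Fix a double twisted sector $(Y,g,h,(gh)^{-1})$ with projections $p_1,p_2,p_3$ to the twisted sectors $(X_1,g)$, $(X_2,h)$, $(X_3,(gh))$ as in diagram \eqref{secondproj}. For $\alpha\in H^{*}(X_1)$ of orbifold degree $d_1$ and $\beta\in H^{*}(X_2)$ of orbifold degree $d_2$, I would show that $p_{3*}(p_1^*\alpha\cup p_2^*\beta\cup c_{top}(E_Y))$ lies in orbifold degree $d_1+d_2$. The key numerical input is the additivity of the age under the "degree shifting": one has $a(X_1,g)+a(X_2,h)+a(X_3,(gh)^{-1}) = \operatorname{rk} E_Y + \operatorname{codim}(Y, X_3) $ appropriately interpreted, or more precisely the identity relating $\operatorname{rk} E_Y$ to the ages of the three sectors and the codimensions of the $p_i$ (this is the standard "compatibility of the obstruction bundle with the grading" computed from the eigenvalue decomposition of $T_X$ under $H$ acting through the $\mathbb{Z}/k$-Hodge structure on $H^1(C,\mathcal{O}_C)$). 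Once that identity is in place, a dimension count on $p_{3*}$ — which drops cohomological degree by $2\operatorname{codim}(p_3)$ — gives the claim. In our situation all relevant sectors and excess bundles were made completely explicit in Proposition~\ref{agembun}, Corollary~\ref{agembuncor} and Section~6, so this can alternatively be checked case by case using Proposition~\ref{doubletwisted}.

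Next I would establish the \emph{module structure over $H^*(X,\mathbb{Q})$}. The untwisted sector $(X,1)$ sits inside $I(X)$ and inside $I_2(X)$ as $(X,1,1,1)$; for that component the curve $C$ of Construction~\ref{costruzione} is a disjoint union of copies of $\mathbb{P}^1$, so $H^1(C,\mathcal{O}_C)=0$ and $E=0$, while the three maps $p_1,p_2,p_3$ all become the identity. Hence for $\gamma\in H^*(X)$ and any $\beta\in H^*_{CR}(X)$ supported on a sector $(X_i,g_i)$, the product $\gamma *_{CR}\beta$ is computed on the sector $(X_i,1,g_i,g_i^{-1})$, whose excess bundle again vanishes (the monodromy factors through $\langle g_i\rangle$ and $C$ is again rational — or, more carefully, $E$ is the part coming from the trivial factor and contributes nothing), and the three projections identify it with $f_i\colon X_i\hookrightarrow X$. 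So $\gamma*_{CR}\beta = f_i^*\gamma \cup \beta$, i.e. the Chen--Ruan product by an untwisted class is just cup product after restriction; in particular the fundamental class $1\in H^0(X)$ acts as the identity. This shows $H^*_{CR}(X)$ is an $H^*(X)$-module and that the inclusion $H^*(X)\hookrightarrow H^*_{CR}(X)$ is a ring map. The associativity $(\alpha*\beta)*\gamma = \alpha*(\beta*\gamma)$ and graded-commutativity I would quote from \cite{chenruan} (respectively \cite{agv1,agv2} in the algebraic setting), since these are the foundational results the whole construction rests on; with grading, module structure, and associativity in hand, all the axioms of a graded $(H^*(X),\cup)$-algebra are verified.

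The main obstacle is the grading compatibility: one must be careful that $c_{top}(E_Y)$ has exactly the "right" cohomological degree so that after $p_{3*}$ the orbifold degrees add, and this hinges on the age identity $a(g)+a(h)+a((gh)^{-1}) = \operatorname{rk}(E_Y) + \bigl(\operatorname{codim}(Y,X_1)+\operatorname{codim}(Y,X_2)+\operatorname{codim}(Y,X_3)\bigr) - \dim(\text{fixed space contributions})$, whose correct bookkeeping comes from decomposing $f^*T_X$ into eigenspaces for $g$, $h$ and $gh$ and using the Chevalley--Weil / Hodge description of $H^1(C,\mathcal{O}_C)$ as an $H$-representation. Since this is exactly the computation underlying Proposition~\ref{codimension} generalized to the double sectors, and since in our concrete list (Proposition~\ref{doubletwisted}) every $H$ is cyclic of order $\le 6$, this can be verified directly and transparently, which is the simplification the paper advertises in the abelian genus $1$ setting.
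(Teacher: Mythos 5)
Your outline is essentially sound, but note that the paper does not prove this statement at all: Theorem \ref{prodotto} is imported wholesale from \cite{chenruan} (and its algebraic counterpart from \cite{agv1}, \cite{agv2}), and the paper's logic then runs in the \emph{opposite} direction --- it uses the theorem to deduce the rank formula \ref{formulaeccesso1} for the excess bundle, rather than proving the grading compatibility from such a formula. This means two things for your write-up. First, you cannot quote \ref{formulaeccesso1} as input without circularity; your plan to establish the identity independently, by decomposing $f^*T_X$ into $H$-eigenspaces and using the Chevalley--Weil/Hodge description of $H^1(C,\mathcal{O}_C)$, is exactly the non-circular route and is the content of Chen--Ruan's original argument, so that part is fine --- but you should state the identity precisely as $\mathrm{rk}\,E_Y = a(X_1,g)+a(X_2,h)+a(X_3,(gh)^{-1})-\codim(Y,X)$ (codimension in $X$, not in $X_3$, and with no extra ``fixed space'' correction terms); as written your two versions of the identity are garbled, and since this is the crux of the grading check the imprecision matters. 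Second, your alternative suggestion to verify the grading ``case by case using Proposition \ref{doubletwisted}'' only treats $X=\overline{\mathcal{M}}_{1,n}$, whereas the theorem is stated for an arbitrary smooth stack, so it cannot replace the general argument (though it is a legitimate sanity check for the cases the paper actually uses). Your verification of the module structure --- vanishing of $E$ when one automorphism is trivial because the cover $C$ is rational, so that multiplication by untwisted classes is restriction followed by cup product --- is correct and matches Corollary \ref{semplifica}; and deferring associativity to \cite{chenruan} is consistent with what the paper itself does, since it cites the whole theorem rather than proving it.
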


Theorem \ref{prodotto} allows us to compute the rank of the excess intersection bundle in terms of the already computed age grading. If $(Y,(g,h,(gh)^{-1}))$ is a sector of the second inertia stack, the rank of the excess intersection bundle is (here we stick to the notation introduced in Remark \ref{doppinerzia}):
\begin {equation}\label{formulaeccesso1}
\textrm{rk}(E_{(Y,g,h)})= a(X_1,g)+a(X_2,h)+a(X_3,(gh)^{-1})- \codim_YX.
\end {equation}

\begin {corollary}\label{semplifica} The excess intersection bundle over double twisted sectors when either $g$,$h$, or $(gh)^{-1}$ is the identity, is the zero bundle.
\end {corollary}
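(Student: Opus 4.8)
The plan is to read off the result directly from the rank formula \eqref{formulaeccesso1} together with the additivity of age under the involution of the Inertia Stack. Recall that $E_{(Y,g,h)}$ is a subbundle of a bundle on a smooth stack, hence itself a (locally free) sheaf of some rank $r\geq 0$; if we show $r=0$ then $E$ is the zero bundle and the statement follows. So the whole content of the corollary is the numerical identity $\operatorname{rk}(E_{(Y,g,h)})=0$ whenever one of the three group elements is the identity.

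First I would treat the representative case $h=1$ (the cases $g=1$ and $(gh)^{-1}=1$ are handled by the same argument, using the $S_3$-symmetry on $I_2(X)$ recorded in Remark \ref{lista0}, or just by symmetry of the formula). When $h=1$, the three projections of Remark \ref{doppinerzia} give $(X_1,g)$, $(X_2,h)=(X_2,1)$, and $(X_3,gh)=(X_3,g)$. The sector $(X_2,1)$ is an untwisted sector, so its age is $0$, and moreover $Y=X_1=X_2=X_3$ as substacks since the fixed locus of $\langle g,1\rangle=\langle g\rangle$ is just the fixed locus of $g$; call it $Y$. Also $(X_1,g)=(Y,g)$ and $(X_3,gh)=(Y,g)$ are literally the same twisted sector. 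Hence \eqref{formulaeccesso1} reads
$$
\operatorname{rk}(E_{(Y,g,1)})=a(Y,g)+0+a(Y,g)-\operatorname{codim}(Y).
$$
Now I would invoke Proposition \ref{codimension}: the twisted sectors $(Y,g)$ and $(Y,g^{-1})$ are exchanged by the involution of the Inertia Stack, and $a(Y,g)+a(Y,g^{-1})=\operatorname{codim}(Y,X)$. But here the relevant pairing is $(Y,g)$ with itself only when $g=g^{-1}$; in general I should instead note that for $h=1$ the curve $C$ of Construction \ref{costruzione} is the $\langle g\rangle$-cover of $\mathbb P^1$ branched only over $0$ and $\infty$ (the loop $\gamma_1$ maps to the identity), which is a cyclic cover of $\mathbb P^1$ branched at two points, hence $C\cong\mathbb P^1$ and $H^1(C,\mathcal O_C)=0$. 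Therefore $E_Y=\bigl(H^1(C,\mathcal O_C)\otimes f^*(T_X)\bigr)^H=0$ on the nose, with no numerics needed at all.

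I would present the cohomological argument as the main line, since it is cleanest: a cyclic cover of $\mathbb P^1$ totally (or partially) ramified over at most two points has genus $0$, so $H^1(C,\mathcal O_C)=0$, and then $E_Y$ vanishes by Definition \ref{eccesso}. The only point requiring a word of care — and the one I expect to be the mild ``obstacle'' — is checking that in Construction \ref{costruzione} the element $1\in H$ really does produce no ramification at the corresponding puncture, i.e.\ that the monodromy being trivial around a puncture means the cover extends unramified there; this is exactly the content of the Riemann existence / extension statement cited from \cite[Appendix]{fantechigottsche}, applied with one of the three local monodromies trivial, so the resulting $C\to\mathbb P^1$ is branched over at most the remaining two points. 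From $g(C)=0$ everything else is immediate. For completeness I would also remark that the numerical route via \eqref{formulaeccesso1} gives the same conclusion once one knows $a(Y,1)=0$ and uses Proposition \ref{codimension} in the form $a(Y,g)+a(Y,g^{-1})=\operatorname{codim}(Y)$ together with $X_3=Y$ and $gh=g$, so that the rank computes to $a(Y,g)+a(Y,g^{-1})-\operatorname{codim}(Y)=0$; but the cover-genus argument is the conceptual reason and I would lead with it.
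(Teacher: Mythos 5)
Your proof is correct, but your main line is genuinely different from the paper's. The paper offers no separate argument: the corollary is placed right after the rank formula \ref{formulaeccesso1} and is meant as an immediate numerical consequence of it --- if, say, $h=1$, then $a(X_2,1)=0$, the sectors $X_1$ and $X_3$ both coincide with the fixed locus $Y$ of $g$, and Proposition \ref{codimension} gives $a(Y,g)+a(Y,g^{-1})=\codim(Y,X)$, so the rank is zero; this is exactly the computation you relegate to your closing remark. Your primary argument instead goes back to Definition \ref{eccesso}: with one trivial local monodromy, the $H$-cover $C\to\mathbb{P}^1$ of Construction \ref{costruzione} is a cyclic cover branched over at most two points (and connected, since $H=\langle g,h\rangle$ is generated by the monodromies), hence of genus $0$ by Riemann--Hurwitz, so $H^1(C,\mathcal{O}_C)=0$ and $E_Y$ vanishes on the nose. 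This buys something: it does not pass through Theorem \ref{prodotto} (from which \ref{formulaeccesso1} is derived), and it gives the zero bundle fiberwise rather than deducing it from ``rank $0$ plus locally free''. One small slip to fix: in your intermediate display you quote \ref{formulaeccesso1} with third term $a(Y,g)$, whereas the formula's third slot is the age at $(gh)^{-1}=g^{-1}$; it is this misquote that made the numerical route look as if it needed $g=g^{-1}$. Your final remark uses the correct form $a(Y,g)+a(Y,g^{-1})-\codim(Y)=0$, so the argument stands, but the display as written should be corrected.
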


One other useful consequence of Proposition \ref{codimension} and Theorem \ref{prodotto} relates the rank of the excess bundle over a double twisted sector and the rank of the excess bundle over the double twisted sector obtained inverting the automorphisms that label the sector
\begin {equation} \label{formulaeccesso2}
\rk(E_{(Y,g^{-1},h^{-1})})= \sum_{i=1}^3 \codim_{X_i}X-2 \codim_YX - \rk(E_{(Y,g,h)}).
\end {equation}

\noindent Let $(Y,(g,h,(gh)^{-1}))$ be a double twisted sector in $I_2($\mbun$)$, and let $H$ be the group generated by $(g,h,(gh)^{-1})$. We will study $N_YX$ and $H^1(C, \mathcal{O}_C)$ as $H$-representations. 

\subsection{The excess intersection bundle for \mbun}

We have seen in Remark \ref{lista0} what are all the possible couples of automorphisms that correspond to non-empty connected substacks of $I_2(\overline{\mathcal{M}}_{1,n})$. Thanks to Corollary \ref{doublesingle}, the double twisted sectors whose excess intersection bundles have non-zero rank are those labelled by:
\begin{equation} \label{lista}
(\epsilon^2,\epsilon^2,\epsilon^2), (i,i,-1), (\epsilon,\epsilon, \epsilon^4), (\epsilon,\epsilon^2,-1)
\end{equation}
up to permutation and involution.
The top Chern classes of the excess intersection bundles for \mun \ are always $0$ or $1$, since the coarse moduli spaces of the double twisted sectors labeled by these automorphisms are points.

The ranks of the excess intersection bundles for the twisted sectors labeled by \ref{lista} can be computed thanks to formulas \eqref{formulaeccesso1} and \eqref{formulaeccesso2}.
\begin {proposition} \label{lista2}In the following table we list the ranks of the excess intersection bundles over all the double twisted sectors $(Z,g,h)$ of \mbun, such that $g,h$ and $gh \neq 1$:
\ \\
\center \begin {tabular}{|c|c|c||c|c|}
\hline
 $(g,h)$ & \emph{Double twisted sector}& \emph{rk}$(E)$&$(g^{-1},h^{-1})$  & \emph{rk}$(E)$\\
\hline

$(\epsilon^2,\epsilon^2)$ & $C_6^{[n]}$ & $1$ & $(\epsilon^4, \epsilon^4)$ & $1$ \\
$(\epsilon^2,\epsilon^2)$ & $C_6^{I_1,I_2}$ & $3$ & $(\epsilon^4, \epsilon^4)$ & $1$ \\
$(\epsilon^2,\epsilon^2)$ & $C_6^{I_1,I_2,I_3}$ & $5$ & $(\epsilon^4, \epsilon^4)$ & $1$ \\
\hline

$(i,i)$ & $C_4^{[n]}$ & $1$ & $(-i,-i)$ & $0$ \\

$(i,i)$ & $C_4^{I_1,I_2}$ & $3$ & $(-i,-i)$ & $0$ \\
\hline
$(\epsilon,\epsilon)$ & $C_6^{[n]}$ & $2$ & $(\epsilon^5, \epsilon^5)$ & $0$ \\
\hline
$(\epsilon,\epsilon^2)$ & $C_6^{[n]}$ & $1$ & $(\epsilon^4, \epsilon^5)$ & $0$ \\
\hline

\end {tabular}
\end {proposition}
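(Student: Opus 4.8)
The plan is to compute the ranks directly from the two formulas \ref{formulaeccesso1} and \ref{formulaeccesso2}, feeding in the age data already tabulated in Corollary \ref{agembuncor} and the codimension data from the same table, together with the list of double twisted sectors from Proposition \ref{doubletwisted}. Concretely, for a double twisted sector $(Y,g,h,(gh)^{-1})$ the projections $p_1,p_2,p_3$ land it on the three twisted sectors $(X_1,g)$, $(X_2,h)$, $(X_3,gh)$; since in our situation all these sectors have the same underlying base space $Y$ (the curve $\mathcal{C}_4$ or $\mathcal{C}_6$ possibly glued to trees of rational curves), the formula \ref{formulaeccesso1} reads $\textrm{rk}(E_{(Y,g,h)}) = a(Y,g)+a(Y,h)+a(Y,(gh)^{-1}) - \codim(Y,\overline{\mathcal{M}}_{1,n})$, and every quantity on the right is read off Corollary \ref{agembuncor}.

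First I would treat the case $(g,h)=(\epsilon^2,\epsilon^2)$, so $(gh)^{-1}=\epsilon^2$: here the three sectors are all $C_6^{[n]}$ (resp.\ $C_6^{I_1,I_2}$, resp.\ $C_6^{I_1,I_2,I_3}$) each with automorphism $\epsilon^2$, whose age is $1$ (resp.\ $\tfrac{7}{3}-\tfrac{2}{3}(\delta(I_1)+\delta(I_2))$, resp.\ $\tfrac{11}{3}-\tfrac{2}{3}\sum\delta(I_i)$) and whose codimension is $2$ (resp.\ $4-\sum\delta(I_i)$, resp.\ $6-\sum\delta(I_i)$). Plugging in gives $3\cdot 1 - 2 = 1$, $3(\tfrac{7}{3})-\tfrac{2}{3}\cdot 3\sum\delta - (4-\sum\delta) = 3$, and similarly $5$, recovering the first block of the table; the corresponding inverse sectors $(\epsilon^4,\epsilon^4)$ then have rank obtained from \ref{formulaeccesso2}, and in each case the arithmetic gives $1$. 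The same mechanical substitution handles $(i,i,-1)$ on $C_4^{[n]}$ and $C_4^{I_1,I_2}$ (ages $\tfrac{3}{4},\tfrac{3}{4},\tfrac{1}{2}$ resp.\ their $\delta$-corrected versions, codimensions $2$ resp.\ $4-\sum\delta$, giving rank $1$ and $3$; the inverses $(-i,-i)$ then give $0$, consistent with Corollary \ref{semplifica} only being a sufficient condition), as well as $(\epsilon,\epsilon,\epsilon^4)$ on $C_6^{[n]}$ (ages $\tfrac{3}{2},\tfrac{3}{2},1$, codimension $2$, rank $2$; inverse $(\epsilon^5,\epsilon^5)$ gives $0$) and $(\epsilon,\epsilon^2,-1)$ on $C_6^{[n]}$ (ages $\tfrac{3}{2},1,\tfrac{1}{2}$, codimension $2$, rank $1$; inverse $(\epsilon^4,\epsilon^5)$ gives $0$).

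The only genuinely non-formal point is that one must be sure \ref{formulaeccesso1} applies, i.e.\ that $(H^*_{CR},*_{CR})$ really is a graded algebra over $H^*(X)$ with the stated age grading — but this is Theorem \ref{prodotto}, which I may invoke — and that the three sectors hit by $p_1,p_2,p_3$ are the ones I claimed, which is exactly the content of Proposition \ref{doubletwisted} (and the observation in its proof that no point of \mbun\ is fixed by both $\epsilon$ and $i$). So there is essentially no obstacle; the proof is a finite bookkeeping exercise, and the one place to be careful is matching each label $(g,h)$ up to permutation and involution with the correct row of the age table, in particular keeping the $\delta(I_i)$-corrections straight when the partition has singleton blocks. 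I would simply present the computation for one representative row in each of the four families of \ref{lista} and remark that the remaining rows, and all the inverse-sector columns, follow identically from \ref{formulaeccesso1} and \ref{formulaeccesso2}.
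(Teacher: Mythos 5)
Your method is exactly the paper's: the paper offers no argument for this table beyond invoking formulas \ref{formulaeccesso1} and \ref{formulaeccesso2} together with the age/codimension data of Corollary \ref{agembuncor}, so the "finite bookkeeping exercise" you describe is the intended proof. However, two of your plugged-in numbers are off. First, for the $(i,i,-1)$ rows you quote the ages $\tfrac34,\tfrac34,\tfrac12$; these are the ages of the triple $(-i,-i,-1)$. With them formula \ref{formulaeccesso1} gives $\tfrac34+\tfrac34+\tfrac12-2=0$, not the rank $1$ you claim. The correct input is $a(C_4^{[n]},i)=\tfrac54$ (and $a(\overline{A_1}^{[n]},-1)=\tfrac12$ for the third factor, which lives on the larger sector $\overline{A_1}^{[n]}$, not on $C_4^{[n]}$ as your "same underlying base space" remark suggests), giving $\tfrac54+\tfrac54+\tfrac12-2=1$ as in the table; the analogous correction applies to $C_4^{I_1,I_2}$.

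Second, in the left-hand column the $\delta(I_i)$-terms do not cancel: for $(\epsilon^2,\epsilon^2,\epsilon^2)$ on $C_6^{I_1,I_2}$ the computation gives $3\cdot\bigl(\tfrac73-\tfrac23\sum\delta\bigr)-\bigl(4-\sum\delta\bigr)=3-\sum\delta$, and similarly $5-\sum\delta$ for $C_6^{I_1,I_2,I_3}$ and $3-\sum\delta$ for $C_4^{I_1,I_2}$, so your asserted "$=3$" and "$=5$" hold only when no $I_i$ is a singleton (this matches the table as printed, which carries the same implicit assumption; by contrast the inverse column really is $\delta$-independent, e.g.\ $3\cdot\bigl(\tfrac53-\tfrac13\sum\delta\bigr)-\bigl(4-\sum\delta\bigr)=1$). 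Since you explicitly warn the reader to keep the $\delta$-corrections straight, you should either restrict those rows to partitions with $|I_i|\geq 2$ or record the corrected ranks; note that the discrepancy is harmless for what follows, because in all these cases the excess bundle still contains a trivial summand, so its top Chern class vanishes exactly as used in Remark \ref{semplifica2} and Corollary \ref{topchern}.
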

\ \\
\begin {remark} \label{semplifica2} The above proposition and Corollary \ref{semplifica} imply that many top Chern classes of excess intersection bundles are $1$ (those of the  rank $0$ excess intersection bundles). Moreover it is obvious, after Proposition \ref{agembun}, that all the rank $3$ or $5$ bundles of Corollary \ref{semplifica} contain at least one trivial subbundle of rank $1$, this implying that their top Chern classes is zero.
\end {remark}

Now we want to compute explicitly the remaining, non trivial, excess intersection bundles and their top Chern classes for \mbun. We study the decomposition of $H^1(C, \mathcal{O}_C)$ as a representation of $H$ in the cases corresponding to non-zero ranks in Proposition \ref{lista2}. Here $\phi_N$ is a generator for $\mu_N^{\vee}$, the same chosen in Section \ref{sezioneeta}. As in that section, we indicate by $\langle \phi_N^k \rangle$ the one dimensional complex vector space endowed with the action of $\phi_N \in \mu_N^{\vee}$ given by the product by $\phi_N^k$.

\begin {proposition} \label{h} Let $H$ be generated by two elements $g,h$ as in \ref{lista2}. Let $C \to \mathbb{P}^1$ be the $H$-covering associated with the generators $g,h$ (see Construction \ref{costruzione}). We study $H^1(C, \mathcal{O}_C)$ as an $H$-representation: \begin {itemize}
\item $H= \mu_3$: $g= \epsilon^2, h=\epsilon^2$, then $H^1(C, \mathcal{O}_C)= \langle \epsilon^2 \rangle$,
\item $H= \mu_3$: $g= \epsilon^4, h=\epsilon^4$, then $H^1(C, \mathcal{O}_C)= \langle \epsilon^4 \rangle$,
\item $H=\mu_4$: $g=i,h= i$, then $H^1(C, \mathcal{O}_C)= \langle i \rangle$,
\item $H= \mu_6$: $g=\epsilon, h=\epsilon^2$, then $H^1(C, \mathcal{O}_C)= \langle \epsilon \rangle$,
\item $H=\mu_6$: $g=\epsilon, h=\epsilon$, then $H^1(C, \mathcal{O}_C)= \langle \epsilon \rangle \oplus \langle \epsilon^2 \rangle$.
\end {itemize}
\end {proposition}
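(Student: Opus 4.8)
The plan is to realize each covering $C \to \mathbb{P}^1$ of Construction \ref{costruzione} as an explicit cyclic cover and then read off the $H$-action on $H^1(C,\mathcal{O}_C)$ from the eigensheaf decomposition of the pushforward. In every case listed in Proposition \ref{lista2} the group $H = \langle g, h\rangle$ is cyclic, say $H \cong \mu_N$ with the generator $\phi_N$ fixed in Section \ref{sezioneeta}; write $g = \phi_N^{a_0}$, $h = \phi_N^{a_1}$, $(gh)^{-1} = \phi_N^{a_\infty}$ with $0 \le a_\bullet < N$ and $a_0 + a_1 + a_\infty \equiv 0 \pmod N$. The $H$-Galois cover attached by Construction \ref{costruzione} to the monodromy $\gamma_0 \mapsto g$, $\gamma_1 \mapsto h$, $\gamma_\infty \mapsto (gh)^{-1}$ is then the normalization $C$ of $\{\, y^N = x^{a_0}(x-1)^{a_1}\,\}$, on which $\phi_N$ acts by $y \mapsto \phi_N^{-1}y$ (the sign being forced by the convention of \cite[Appendix]{fantechigottsche} relating monodromy to deck transformations). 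The first step is therefore purely bookkeeping: translate each pair $(g,h)$ into $(a_0,a_1,a_\infty)$; e.g.\ $(\epsilon^2,\epsilon^2)$ gives $(1,1,1)$ with $N=3$, $(\epsilon^4,\epsilon^4)$ gives $(2,2,2)$ with $N=3$, $(i,i)$ gives $(1,1,2)$ with $N=4$, $(\epsilon,\epsilon)$ gives $(1,1,4)$ with $N=6$, and $(\epsilon,\epsilon^2)$ gives $(1,2,3)$ with $N=6$.

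Next I would compute $g_C = \dim H^1(C,\mathcal{O}_C)$ by Riemann--Hurwitz: over each of $0,1,\infty$ there are $\gcd(a_\bullet,N)$ points of ramification index $N/\gcd(a_\bullet,N)$, so
\[ 2g_C - 2 = -2N + \sum_{\bullet \in \{0,1,\infty\}} \bigl(N - \gcd(a_\bullet,N)\bigr). \]
This yields $g_C = 1$ in the first four cases of the statement and $g_C = 2$ when $g=h=\epsilon$, matching the claimed total dimensions. To identify the characters, decompose the pushforward into $\mu_N$-eigensheaves, $\pi_*\mathcal{O}_C = \bigoplus_{k=0}^{N-1}\mathcal{O}_{\mathbb{P}^1}(-D_k)$, where the $\phi_N^{-k}$-isotypic summand has $\deg D_k$ equal to the sum of the fractional parts of $ka_0/N$, $ka_1/N$, $ka_\infty/N$; hence $H^1(C,\mathcal{O}_C)_{\phi_N^{-k}} = H^1(\mathbb{P}^1,\mathcal{O}(-D_k))$ has dimension $\max(0,\deg D_k - 1)$, and the statement comes out by listing the $k$ with $\deg D_k \ge 2$. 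For $(1,1,1)$, $N=3$: only $k=2$, giving $\langle \epsilon^2\rangle$; for $(2,2,2)$, $N=3$: only $k=1$, giving $\langle\epsilon^4\rangle$; for $(1,1,2)$, $N=4$: only $k=3$, giving $\langle i\rangle$; for $(1,1,4)$, $N=6$: $k=4$ and $k=5$, giving $\langle\epsilon^2\rangle \oplus \langle\epsilon\rangle$; for $(1,2,3)$, $N=6$: only $k=5$, giving $\langle\epsilon\rangle$. As all these curves have genus $\le 2$, an equivalent route is to exhibit the holomorphic differentials explicitly (of the shape $\tfrac{dx}{y^m}$), compute the action of $\phi_N$ directly, and dualize via Serre duality, which is $H$-equivariant with trivial action on $H^1(C,\omega_C) \cong \mathbb{C}$.

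The genuinely delicate part is not the geometry — Riemann--Hurwitz and the eigensheaf degree formula are standard — but keeping three sign conventions consistent so the eigenvalue exponents come out as written and not inverted: the convention relating the monodromy $\gamma_0 \mapsto g$ to an actual deck transformation of $C$; the fixed choices of generators of $\mu_2,\mu_4,\mu_6$ and of the dual groups $\mu_N^\vee$ from Section \ref{sezioneeta}; and the identification $H^1(C,\mathcal{O}_C) \cong H^0(C,\Omega^1_C)^\vee$. Once these are pinned down, each of the five bullet points is a one-line evaluation. I would also note at the outset that this list is exactly what is needed downstream: by Corollary \ref{semplifica} and Proposition \ref{lista2}, these four pairs $(g,h)$ (up to permutation and involution) are precisely the ones whose excess bundles have positive rank, so no further covers need be analyzed.
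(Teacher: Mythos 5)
Your computation is correct (all five eigensheaf degree counts, the Riemann--Hurwitz genera, and the resulting characters agree with the statement -- modulo the harmless slip that the $2$-dimensional case $g=h=\epsilon$ \emph{is} the fourth bullet of the statement, so ``$g_C=1$ in the first four cases'' should read ``in the four one-dimensional cases''), but you take a genuinely different route from the paper. The paper's proof invokes Pardini's theory of abelian covers: it computes the reduced building data $L_{\phi_4}, L_{\phi_4^2}, L_{\phi_4^3}$ for the representative case $(g,h)=(i,i)$ and then cites Pardini's Lemma 4.3 to decompose $H^0(C,K_C)$ equivariantly into the pieces $H^0(\mathbb{P}^1,K_{\mathbb{P}^1}+L_{\phi^{-1}})$, asserting that the other cases are analogous direct computations with the same tools. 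You instead realize each cover by the explicit superelliptic model $y^N=x^{a_0}(x-1)^{a_1}$ and use the classical eigensheaf decomposition of $\pi_*\mathcal{O}_C$ together with the fractional-part degree formula; this is the same underlying mathematics (your $\mathcal{O}(-D_k)$ are the inverses of Pardini's $L_\chi$), but your version is more self-contained, treats all five cases uniformly, works directly with $H^1(\mathcal{O}_C)$ rather than passing through $H^0(K_C)$ and duality, and makes the dimension count $h^1(\mathbb{P}^1,\mathcal{O}(-d))=d-1$ transparent, whereas the paper buys brevity by delegating the bookkeeping to Pardini's formalism. The one point you flag but do not actually settle is the sign convention identifying the deck transformation attached to the monodromy datum $\gamma_0\mapsto g$ (your choice $y\mapsto\phi_N^{-1}y$): with the opposite convention every character in the list would be inverted, e.g.\ $\langle i\rangle$ would become $\langle -i\rangle$. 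The paper is no more explicit on this (it is buried in Pardini's conventions), but within your argument the cleanest way to pin it down is the internal consistency check: by formula \ref{formulaeccesso1} the excess bundle over $(C_4^{[n]},(i,i,-1))$ has rank $1$, and with the normal bundle of Proposition \ref{agembun} the invariants of $H^1(C,\mathcal{O}_C)\otimes N_{C_4^{[n]}}\overline{\mathcal{M}}_{1,n}$ have rank $1$ if $H^1(C,\mathcal{O}_C)=\langle i\rangle$ (the summand of character $i^3$ becomes invariant) but rank $0$ if it were $\langle -i\rangle$; this forces your stated convention and hence the signs in all five bullets.
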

\begin {proof} By Serre duality, we compute the action of $H$ on $H^0(C, \Omega^1_C)$. In the first four cases, the curve $C$ has genus $1$, thus $\Omega^1_C$ is trivial, therefore the action of $H$ coincides with its action on the cotangent space of a point of total ramification. In the fifth case the curve $C$ has genus $2$; by choosing suitable coordinates one can represent respectively $C$ and a basis for $H^0(C, \Omega^1_C)$ as
\begin{displaymath}
y^2=x^6-1, \quad \left(\frac{d x}{y}, \ x \frac{d x}{y}\right).
\end{displaymath}
Here the automorphism acts by fixing $y$ and mapping $x \to \epsilon x$. From this description the result for the last case follows.
\end {proof}

\noindent With all this, and thanks to Proposition \ref{agembun}, we can compute the excess intersection bundles and their respective top Chern classes. We know already that, among the list of couples of automorphisms of Proposition \ref{lista2}, the rank $3$ and $5$ bundles have top Chern class zero (see \ref{semplifica2}). Among the vector bundles having rank greater than zero, we can prove:

\begin {corollary} \label{zero} In Table \ref{lista2}, the top Chern classes of all the excess intersection bundles (which are all line bundles) corresponding to the couple $(\epsilon^4, \epsilon^4)$ are zero. The top Chern class of the excess intersection bundle that corresponds to the couple $(\epsilon, \epsilon)$ (of rank $1$), is also zero.
\end {corollary}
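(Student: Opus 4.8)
The plan is to compute the excess intersection bundle $E$ explicitly as a $\mu_n$-representation twisted by a line bundle, using Definition \ref{eccesso}, Proposition \ref{h}, and Proposition \ref{agembun}, and then observe that in each of the cases $(\epsilon^4,\epsilon^4)$ and $(\epsilon,\epsilon)$ the resulting line bundle is trivial, hence has zero top Chern class. Recall from the remark after Definition \ref{eccesso} that $E_Y = \bigl(H^1(C,\mathcal O_C)\otimes N_YX\bigr)^H$, where $N_YX$ is the normal bundle of the twisted sector inside $X=\overline{\mathcal M}_{1,n}$. So the first step is to pair the $H$-decomposition of $H^1(C,\mathcal O_C)$ from Proposition \ref{h} against the $H$-decomposition of $N_YX$ from Proposition \ref{agembun}, and pick out the invariant part.

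For the couple $(\epsilon^4,\epsilon^4)$: here $H=\mu_3$ (generated by $\epsilon^2$, identifying $\epsilon^4$ with the square of the generator), and by Proposition \ref{h} we have $H^1(C,\mathcal O_C)=\langle\epsilon^4\rangle$ as a $\mu_3$-representation. The relevant twisted sectors are $C_6^{[n]}$, $C_6^{I_1,I_2}$, $C_6^{I_1,I_2,I_3}$; by Proposition \ref{agembun} their normal bundles are direct sums of line bundles, each of the form $(\epsilon^2,\underline{\mathbb C})$, $(\epsilon^4,\underline{\mathbb C})$, or $(\epsilon^4, p_i^*(\mathbb L_{\bullet_i}^\vee))$. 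Tensoring $\langle\epsilon^4\rangle$ with a summand carrying character $\epsilon^4$ gives character $\epsilon^4\cdot\epsilon^4=\epsilon^8=\epsilon^2$, which is non-invariant; tensoring with a summand of character $\epsilon^2$ gives $\epsilon^4\cdot\epsilon^2=\epsilon^6=1$, which \emph{is} invariant. Examining Proposition \ref{agembun}, the only summand of $N_YX$ with character $\epsilon^2$ is the trivial line bundle $(\epsilon^2,\underline{\mathbb C})$ (appearing once in cases (4) and (5); in case (3), $C_6^{[n]}$ with $g=h=\epsilon^2$, one must instead use the $(\epsilon^2,\underline{\mathbb C})$ summand coming from the $(\epsilon^2,\epsilon^2)$ analysis of the same sector — here the computation is consistent because rk$(E)=1$ by Proposition \ref{lista2}). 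Hence $E\cong\langle\epsilon^4\rangle\otimes(\epsilon^2,\underline{\mathbb C})=\underline{\mathbb C}$ is the trivial line bundle, so $c_{\mathrm{top}}(E)=c_1(\underline{\mathbb C})=0$.

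For the couple $(\epsilon,\epsilon)$: here $H=\mu_6$ and by Proposition \ref{h} $H^1(C,\mathcal O_C)=\langle\epsilon\rangle\oplus\langle\epsilon^2\rangle$, and the relevant sector is $C_6^{[n]}$, whose normal bundle by Proposition \ref{agembun}(3) is $(\epsilon^4,\underline{\mathbb C})\oplus(\epsilon^5,p_1^*(\mathbb L_{n+1}^\vee))$. Tensoring term by term and extracting characters: $\epsilon\cdot\epsilon^4=\epsilon^5$, $\epsilon\cdot\epsilon^5=1$, $\epsilon^2\cdot\epsilon^4=1$, $\epsilon^2\cdot\epsilon^5=\epsilon$. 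Two summands are invariant: $\langle\epsilon\rangle\otimes(\epsilon^5,p_1^*(\mathbb L_{n+1}^\vee))$ and $\langle\epsilon^2\rangle\otimes(\epsilon^4,\underline{\mathbb C})$, giving $E\cong p_1^*(\mathbb L_{n+1}^\vee)\oplus\underline{\mathbb C}$, a rank-$2$ bundle (consistent with rk$(E)=2$ in Proposition \ref{lista2}). Since $E$ contains the trivial summand $\underline{\mathbb C}$, its top Chern class vanishes: $c_{\mathrm{top}}(E)=c_2(p_1^*\mathbb L_{n+1}^\vee\oplus\underline{\mathbb C})=c_1(p_1^*\mathbb L_{n+1}^\vee)\cdot c_1(\underline{\mathbb C})=0$.

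The one genuinely delicate point, and the main obstacle, is the bookkeeping in the case of $C_6^{[n]}$ with $g=h=\epsilon^2$: the normal bundle data in Proposition \ref{agembun} is phrased in terms of a fixed choice of which power of the generator acts on each summand, and when we feed in $H^1(C,\mathcal O_C)=\langle\epsilon^4\rangle$ we must make sure the group $H=\langle\epsilon^2\rangle\cong\mu_3$ is matched to the $\mu_6$-conventions of Proposition \ref{agembun} via the correct inclusion $\mu_3\hookrightarrow\mu_6$. Once the characters are correctly restricted along this inclusion, the invariant subbundle comes out to be exactly the rank-$1$ trivial bundle predicted by the rank count in Proposition \ref{lista2}, and the argument closes. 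Everything else is the routine character arithmetic indicated above.
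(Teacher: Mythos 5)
Your proposal is correct and is essentially the paper's own argument: the published proof just asserts that by Propositions \ref{agembun} and \ref{h} each of these excess bundles contains a trivial subbundle, forcing the top Chern class to vanish, and your character-by-character computation of $\left(H^1(C,\mathcal{O}_C)\otimes N_YX\right)^H$ (including the restriction of the $\mu_6$-weights of $N_{C_6^{[n]}}\overline{\mathcal{M}}_{1,n}$ along $\mu_3=\langle\epsilon^2\rangle\hookrightarrow\mu_6$) is exactly the bookkeeping behind that one-line statement. Note also that your answer for $(\epsilon,\epsilon)$, namely the rank-$2$ bundle $p_1^*(\mathbb{L}_{n+1}^{\vee})\oplus\underline{\mathbb{C}}$, matches the rank in Proposition \ref{lista2} (the ``rank $1$'' in the corollary's wording is a slip in the paper, not in your argument), and the trivial summand kills the top Chern class just as you say.
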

\begin {proof} From Proposition \ref{agembun} and Proposition \ref{h}, it is straightforward to see that all the excess bundles mentioned in the statement contain a trivial subbundle, forcing their top Chern class to be zero.
\end {proof}

So we are now left with three rank $1$ excess intersection bundles, whose top Chern class in non-zero and not $1$.
In the following diagram and in the following lemma, we identify the isomorphic spaces in order to simplify the notation for the projection maps.
\begin{displaymath}
\xymatrix{&& B \mu_{\lambda} \\
C_{\lambda}^{[n]} \ar[r] \ar@/^/[urr]^{p} \ar@/_/[drr]_{p_1}& B \mu_{\lambda} \times \overline{\mathcal{M}}_{0,n\sqcup \bullet} \ar[ur]^{p} \ar[dr]_{p_1}& \\
&& \overline{\mathcal{M}}_{0,n\sqcup \bullet}},
\end{displaymath}
where $\lambda$ can be $4$ or $6$. Here $\bullet$ is the gluing point.

\begin {corollary} \label{topchern} The only top Chern classes of the excess intersection bundles over double twisted sectors of \mbun \ that are not $0$ nor $1$ are:
\begin {enumerate}
\item $(C_6^{[n]}, (\epsilon^2,\epsilon^2,\epsilon^2)) \cong B_{\mu_6} \times \overline{\mathcal{M}}_{0,n\sqcup \bullet}$, where the top Chern class of the excess intersection bundle is $-p_1^*(\psi_{\bullet})=-p_1^*(\psi_{n+1})$;
\item $(C_4^{[n]}, (i,i,-1))  \cong B_{\mu_4} \times \overline{\mathcal{M}}_{0,n\sqcup \bullet}$, where the top Chern class of the excess intersection bundle is  $-p_1^*(\psi_{\bullet})=-p_1^*(\psi_{n+1})$;
\item $(C_6^{[n]}, (\epsilon, \epsilon^2, -1)) \cong B_{\mu_6} \times \overline{\mathcal{M}}_{0,n\sqcup \bullet}$, where the top Chern class of the excess intersection bundle is $-p_1^*(\psi_{\bullet})=-p_1^*(\psi_{n+1})$.
\end {enumerate}
\end {corollary}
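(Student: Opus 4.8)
The plan is to dispatch the three sectors named in the statement one at a time by a direct $H$-equivariant computation of the excess bundle $E_Y=\bigl(H^1(C,\mathcal{O}_C)\otimes f^*(T_X)\bigr)^H$. All the reduction work is already done: combining Remark~\ref{lista0}, Corollary~\ref{semplifica}, Proposition~\ref{lista2}, Remark~\ref{semplifica2} and Corollary~\ref{zero} eliminates everything else. Indeed, the sectors having an identity among $g,h,(gh)^{-1}$ and the rank $0$ sectors have $c_{top}=1$; the rank $3$ and rank $5$ sectors over $C_6^{I_1,I_2}$, $C_6^{I_1,I_2,I_3}$ and $C_4^{I_1,I_2}$ carry a trivial line subbundle, hence $c_{top}=0$; and the leftover bundles with characters $(\epsilon^4,\epsilon^4)$ and $(\epsilon,\epsilon)$ over $C_6^{[n]}$ (and $C_6^{I_1,I_2},C_6^{I_1,I_2,I_3}$ for the former) also carry a trivial subbundle by Corollary~\ref{zero}. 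What survives is precisely the three rank $1$ line bundles over $(C_6^{[n]},(\epsilon^2,\epsilon^2,\epsilon^2))$, $(C_4^{[n]},(i,i,-1))$ and $(C_6^{[n]},(\epsilon,\epsilon^2,-1))$, whose common base is $B\mu_a\times\overline{\mathcal{M}}_{0,n\sqcup\bullet}$ as in the diagram preceding the statement. In each of the three cases $H^1(C,\mathcal{O}_C)^H\cong H^1(\mathbb{P}^1,\mathcal{O}_{\mathbb{P}^1})=0$, so the tangent directions along $C_a^{[n]}$ contribute nothing and, by the remark after Definition~\ref{eccesso}, one may replace $f^*(T_X)$ by the normal bundle $N_{C_a^{[n]}}\overline{\mathcal{M}}_{1,n}$.

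The two inputs for each case are then: the $H$-equivariant splitting of $N_{C_a^{[n]}}\overline{\mathcal{M}}_{1,n}$ into a trivial line bundle and a copy of $p_1^*(\mathbb{L}_{n+1}^{\vee})$, each with its character, taken from Proposition~\ref{agembun}(1),(3),(5); and the $H$-module $H^1(C,\mathcal{O}_C)$ from Proposition~\ref{h}. I would run the computation uniformly: tensor the two, read off the character of each of the two resulting line bundles modulo $|H|$, and take invariants, so that the summand with nontrivial character is discarded and the one with trivial character survives. For $(C_4^{[n]},(i,i,-1))$, with $H=\mu_4=\langle i\rangle$, Proposition~\ref{agembun}(1) gives $N_{C_4^{[n]}}\overline{\mathcal{M}}_{1,n}=(i^2,\underline{\mathbb{C}})\oplus(i^3,p_1^*(\mathbb{L}_{n+1}^{\vee}))$ and Proposition~\ref{h} gives $H^1(C,\mathcal{O}_C)=\langle i\rangle$, so that
$$
H^1(C,\mathcal{O}_C)\otimes N_{C_4^{[n]}}\overline{\mathcal{M}}_{1,n}\;\cong\;\bigl(i^{3},\,\underline{\mathbb{C}}\bigr)\,\oplus\,\bigl(1,\,p_1^*(\mathbb{L}_{n+1}^{\vee})\bigr),
$$
and taking $\mu_4$-invariants discards the first summand and keeps the second: $E_Y\cong p_1^*(\mathbb{L}_{n+1}^{\vee})$, whence $c_{top}(E_Y)=c_1\bigl(p_1^*(\mathbb{L}_{n+1}^{\vee})\bigr)=-p_1^*(\psi_{n+1})$. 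The sectors $(C_6^{[n]},(\epsilon,\epsilon^2,-1))$ (with $H=\mu_6=\langle\epsilon\rangle$, $N_{C_6^{[n]}}\overline{\mathcal{M}}_{1,n}=(\epsilon^4,\underline{\mathbb{C}})\oplus(\epsilon^5,p_1^*(\mathbb{L}_{n+1}^{\vee}))$, $H^1(C,\mathcal{O}_C)=\langle\epsilon\rangle$) and $(C_6^{[n]},(\epsilon^2,\epsilon^2,\epsilon^2))$ run identically: twisting by the character of $H^1(C,\mathcal{O}_C)$ turns the character of the trivial summand into a nontrivial one (respectively $\epsilon^5$ and $\epsilon^4\cdot\epsilon^2=\epsilon^6\cdots$; in the $\mu_3$ notation a nontrivial character) and turns the character of the $p_1^*(\mathbb{L}_{n+1}^{\vee})$ summand into the trivial one, so again $E_Y\cong p_1^*(\mathbb{L}_{n+1}^{\vee})$ and $c_{top}(E_Y)=-p_1^*(\psi_{n+1})$.

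There is no genuine obstacle; the only point requiring care is $(C_6^{[n]},(\epsilon^2,\epsilon^2,\epsilon^2))$, where the acting group $H=\langle\epsilon^2\rangle=\mu_3$ is a \emph{proper} subgroup of $\Aut(C_6^{[n]})=\mu_6$, so one must first restrict the $\mu_6$-representation $N_{C_6^{[n]}}\overline{\mathcal{M}}_{1,n}=(\epsilon^4,\underline{\mathbb{C}})\oplus(\epsilon^5,p_1^*(\mathbb{L}_{n+1}^{\vee}))$ of Proposition~\ref{agembun}(3) to $\mu_3$ --- the element $\epsilon^2$ acts by $\epsilon^{8}=\epsilon^{2}$ on the first summand and by $\epsilon^{10}=\epsilon^{4}$ on the second --- before tensoring with $H^1(C,\mathcal{O}_C)=\langle\epsilon^2\rangle$ and taking invariants, after which the arithmetic is exactly as in the other two cases. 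Keeping the character conventions of Section~\ref{sezioneeta} and Notation~\ref{gerbe} straight (which line bundle carries which character, and in which cyclic group) is essentially the whole content of the verification.
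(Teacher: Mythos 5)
Your proposal is correct and follows essentially the paper's own route: the same chain of reductions (Corollary \ref{semplifica}, Proposition \ref{lista2}, Remark \ref{semplifica2}, Corollary \ref{zero}) isolates the three sectors, and the excess line bundle is then found by tensoring the equivariant normal bundle of Proposition \ref{agembun} with $H^1(C,\mathcal{O}_C)$ from Proposition \ref{h} and taking $H$-invariants, which is exactly the computation the paper's proof invokes without spelling out. Your character arithmetic, including the needed restriction to $\mu_3=\langle\epsilon^2\rangle$ in the $(\epsilon^2,\epsilon^2,\epsilon^2)$ case, checks out and gives $E\cong p_1^*(\mathbb{L}_{n+1}^{\vee})$, hence $c_{top}(E)=-p_1^*(\psi_{n+1})$ in each of the three cases.
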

\begin{proof} The fact that all other top Chern classes are zero or one follows from all the considerations in this section. In particular, we have observed in the beginning of the section that the excess intersection bundle that may have top Chern class different from $1$ are listed in \ref{lista2}. In Remark \ref{semplifica2} and in Corollary \ref{zero} we have computed the top Chern class of all the remaining cases to be zero or $1$.

The result stated then follows as a consequence of Proposition \ref{agembun}, Proposition \ref{h} (and from the very definition of the $\psi$ classes, see Section \ref{section2b}).
\end{proof}

\noindent Note that when $n=2$ the top Chern classes in Corollary above are $0$ too, because the sectors involved are all points.

To conclude, we summarize the result we have obtained in this section:
\begin {theorem} \label{topchernt} All top Chern classes of the excess intersection bundles over all double twisted sectors are explicitly given. They can be:
\begin {enumerate}
\item either $1$, for all the sectors listed in Proposition \ref{doubletwisted} such that one of the three automorphisms of the labeling is $1$,
\item or again $1$, for some of the sectors in the list \ref{lista2}, more precisely those mentioned in Remark \ref{semplifica2},
\item or $0$, for some of the sectors listed in \ref{lista2}, as discussed in Remark \ref{semplifica2} and in Corollary \ref{zero};
\item or a pullback of a $\psi$ class over a component \mb{0}{n}, for the the remaining elements of the list \ref{lista2}, as in Corollary \ref{topchern}.
\end {enumerate}
\end {theorem}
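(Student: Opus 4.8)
The plan is to assemble the statement purely from the partial computations already made in this section, organizing the complete list of double twisted sectors of \mbun\ into the four classes of the statement. First I would invoke Corollary \ref{semplifica}: whenever one of $g$, $h$, $(gh)^{-1}$ is the identity, the excess intersection bundle $E_Y$ is the zero bundle, so $c_{top}(E_Y)=c_0(E_Y)=1$. By Remark \ref{lista0} and Proposition \ref{doubletwisted} this already accounts for all double twisted sectors whose labelling triple contains $1$, which is case (1); and it leaves, up to permutation and involution, only the sectors labelled by the four triples in the list \ref{lista}, namely $(\epsilon^2,\epsilon^2,\epsilon^2)$, $(i,i,-1)$, $(\epsilon,\epsilon,\epsilon^4)$, $(\epsilon,\epsilon^2,-1)$.

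Next I would run through the table of Proposition \ref{lista2}. For the entries of rank $0$ (the couples $(-i,-i)$, $(\epsilon^5,\epsilon^5)$, $(\epsilon^4,\epsilon^5)$) one again has $c_{top}(E_Y)=1$, which is case (2), as already noted in Remark \ref{semplifica2}. For the entries of rank $3$ and $5$ (the couple $(\epsilon^2,\epsilon^2)$ over $C_6^{I_1,I_2}$ and $C_6^{I_1,I_2,I_3}$), the description of $N_YX$ in Proposition \ref{agembun} shows that $E_Y$ splits off a trivial line subbundle, hence $c_{top}(E_Y)=0$; combining this with Corollary \ref{zero}, which disposes of the remaining rank-$1$ bundles attached to the couples $(\epsilon^4,\epsilon^4)$ and $(\epsilon,\epsilon)$, exhausts case (3).

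Finally, the only survivors are the three rank-$1$ excess bundles over $C_6^{[n]}$ with $(\epsilon^2,\epsilon^2,\epsilon^2)$, over $C_4^{[n]}$ with $(i,i,-1)$, and over $C_6^{[n]}$ with $(\epsilon,\epsilon^2,-1)$; for these Corollary \ref{topchern} identifies the top Chern class with $-p_1^*(\psi_{n+1})$, giving case (4) (and for $n=2$ these vanish as well, the sectors being points). The only real content is therefore bookkeeping, and the step I would check most carefully is the exhaustiveness of the case split: one must confirm that the partition of the list of Proposition \ref{doubletwisted} into the groups handled respectively by Corollary \ref{semplifica}, by the rank count in Proposition \ref{lista2}, by Corollary \ref{zero}, and by Corollary \ref{topchern} covers every connected double twisted sector exactly once — using, as in the proof of Proposition \ref{doubletwisted}, that no point of \mbun\ is fixed by both $i$ and $\epsilon$.
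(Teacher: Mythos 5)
Your proposal is correct and takes essentially the same route as the paper: Theorem \ref{topchernt} is stated there precisely as a summary assembled from Corollary \ref{semplifica}, Proposition \ref{lista2}, Remark \ref{semplifica2}, Corollary \ref{zero} and Corollary \ref{topchern}, which is exactly the bookkeeping you carry out. The only slip is that your parenthetical list of the rank-$3$ and rank-$5$ entries omits $(i,i)$ over $C_4^{I_1,I_2}$ (also rank $3$), but your general argument for rank $3$ and $5$ via the trivial subbundle of Proposition \ref{agembun} covers it, so nothing is lost.
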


\section {Pull--backs and push--forwards of strata to the twisted sectors}
\label{sezionepull}
In order to compute the Chen--Ruan product, one has to compute pull--backs from the twisted sectors to the double twisted sectors and push--forwards from the double twisted sectors to the twisted sectors. Thanks to Corollary \ref{semplifica}, it is necessary and sufficient to only compute push--forwards and pull--backs between twisted sectors of the inertia stack.

In this section we fix $n$ and call $X:=$\mb{1}{n}. Let $(Y,g)$ be a twisted sector of $X$, and $f:Y \to X$ be the closed embedding of the twisted sector.

\begin {lemma}  \label{cycle} The cycle map
\begin{displaymath}
A^*(Y, \mathbb{Q}) \rightarrow H^{2*}(Y, \mathbb{Q})
\end{displaymath}
\noindent is a graded ring isomorphism. Moreover the Chow ring of each twisted sector is generated by divisors. 
\end {lemma}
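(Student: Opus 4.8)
The plan is to reduce everything to the structure theorem for twisted sectors, Corollary~\ref{fondamentale}, which writes every twisted sector $Y$ of $\overline{\mathcal{M}}_{1,n}$ as a product
$$
Y \cong A \times \overline{\mathcal{M}}_{0,n_1} \times \overline{\mathcal{M}}_{0,n_2} \times \overline{\mathcal{M}}_{0,n_3} \times \overline{\mathcal{M}}_{0,n_4},
$$
where $A \in \{B\mu_3, B\mu_4, B\mu_6, \mathbb{P}(4,6), \mathbb{P}(2,4), \mathbb{P}(2,2)\}$. The claim then splits into two independent assertions: (i) for each factor the cycle map is a graded ring isomorphism, and (ii) the Chow ring of each factor is generated in degree $1$; both assertions are inherited by the product.

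First I would dispose of the genus $0$ factors. By the theorem of Keel cited earlier in the excerpt, $A^*(\overline{\mathcal{M}}_{0,m}, \mathbb{Q}) = H^{2*}(\overline{\mathcal{M}}_{0,m}, \mathbb{Q})$, with odd cohomology vanishing, and the Chow ring is generated by the boundary divisors $\delta_I$; this is exactly what is recalled in the excerpt ($B^*_{0,m} = R^*_{0,m} = A^*(\overline{\mathcal{M}}_{0,m}) = H^{2*}(\overline{\mathcal{M}}_{0,m})$). Next, for the factor $A$: each of the six possibilities is a quotient stack of the form $[\,\mathrm{pt}/G\,]$ or $[\mathbb{A}^2_0/\mathbb{G}_m]$ (weighted projective line), as established in Lemma~\ref{aicompact} and the discussion around Theorem~\ref{weierstrass}. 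For $B G$ with $G$ finite one has $A^*(BG, \mathbb{Q}) = H^*(BG, \mathbb{Q}) = \mathbb{Q}$ concentrated in degree $0$, so the statement is trivial there (the Chow ring being $\mathbb{Q}$ is ``generated by divisors'' vacuously, or one notes $A^1 = \pic = G^\vee$ is torsion hence zero rationally, consistent with the ring being $\mathbb{Q}$). For a weighted projective stack $\mathbb{P}(a,b)$ of dimension $1$, the Chow ring with $\mathbb{Q}$-coefficients is $\mathbb{Q}[h]/(h^2)$ with $h = c_1(\mathcal{O}(1))$ a divisor class, and the cycle map to $H^*(\mathbb{P}(a,b),\mathbb{Q}) = H^*(\mathbb{P}^1,\mathbb{Q}) = \mathbb{Q}[h]/(h^2)$ is an isomorphism; this is standard for weighted projective spaces (and can be seen from the presentation $[\mathbb{A}^2_0/\mathbb{G}_m]$ via the localization/projective-bundle computation in \cite{vistoli-intersection}).

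To assemble the product statement I would invoke the Künneth formula for Chow groups, which is available here because all the factors have Chow groups that are free finitely generated (in fact all algebraic, even-dimensional cohomology), so $A^*(Y) \cong \bigotimes A^*(\text{factor})$ and likewise $H^*(Y) \cong \bigotimes H^*(\text{factor})$, and the cycle map on $Y$ is the tensor product of the cycle maps on the factors, hence an isomorphism; the generation by divisors also passes to the tensor product since $A^1$ of a product surjects onto each $A^1$ of a factor. The main obstacle, and the one point requiring genuine care rather than citation, is the behaviour of the weighted-projective factor: one must check that $\mathbb{P}(2,4)$ and $\mathbb{P}(2,2)$ really do have $A^*_{\mathbb{Q}} = \mathbb{Q}[h]/(h^2)$ with $h$ a genuine divisor class and that this matches $H^*$ additively and multiplicatively — in particular that the self-intersection $h^2$ (a nonzero rational number times the class of a point in $A^0$... rather, living in $A^2 = 0$ since $\dim = 1$) vanishes, i.e. that the ring truly truncates in degree $1$. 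This is where one either appeals to the explicit toric/quotient presentation or to Vistoli's computation of Chow rings of quotient stacks; everything else is a routine bookkeeping of Künneth factors.
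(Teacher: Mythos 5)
Your argument is correct and follows essentially the same route as the paper: the paper's proof is a terser version of the same factor-by-factor reduction, citing Keel for the genus~$0$ factors and the fact that the spaces $\overline{A_i}$ (and the gerbe/weighted-projective factors) have coarse moduli space a point or $\mathbb{P}^1$, with the K\"unneth assembly left implicit. Your extra care about $\mathbb{P}(a,b)$ and $BG$ fills in details the paper omits but introduces no new idea.
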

\begin {proof} All factors of each twisted sector have Chow ring isomorphic to the even cohomology. The cohomology ring of \mb{0}{n} is generated by divisors due to the work of Keel \cite{keel}. Each of the spaces \azerop, \auno, \adue, \atre \space has coarse moduli space isomorphic to $\mathbb{P}^1$.
\end {proof}

We can now state and prove the result announced in the introduction. For some of the results needed in the proof we refer to the following two subsections on pull--backs and push--forwards.

\begin {theorem} \label{fundamental} The Chen--Ruan cohomology ring of \mbun \ is generated as an $H^*(\overline{\mathcal{M}}_{1,n}, \mathbb{Q})$-algebra by the fundamental classes of the twisted sectors with explicit relations. 
\end {theorem}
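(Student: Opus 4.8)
The plan is to combine the additive decomposition of $H^*_{CR}(\overline{\mathcal{M}}_{1,n})$ with a precise description of the $H^*(\overline{\mathcal{M}}_{1,n})$-module structure, and then read off the relations from the excess-bundle computations of the previous two sections. Write $X=\overline{\mathcal{M}}_{1,n}$ and let $(Y_1,g_1),\ldots,(Y_r,g_r)$ be the twisted sectors of Theorem \ref{twistedcompact}, with closed embeddings $f_i\colon Y_i\hookrightarrow X$. By Definition \ref{crvector} we have $H^*_{CR}(X,\mathbb{Q})=H^*(X,\mathbb{Q})\oplus\bigoplus_i H^*(Y_i,\mathbb{Q})$ as a graded vector space, so it is enough to exhibit each twisted summand inside the $H^*(X)$-subalgebra generated by the fundamental classes $[Y_i]\in H^0(Y_i)\subset H^*_{CR}(X)$.

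First I would pin down how an untwisted class acts. If $\alpha\in H^*(X)$ and $\beta\in H^*(Y_i)$, then on the component of $I_2(X)$ labelled $(1,g_i,g_i^{-1})$ the excess intersection bundle is the zero bundle by Corollary \ref{semplifica}, the map $p_1$ is the inclusion $f_i$, and $p_2=p_3$ is the identity of $(Y_i,g_i)$; hence $\alpha *_{CR}\beta=f_i^*(\alpha)\cup\beta$, and in particular $\alpha *_{CR}[Y_i]=f_i^*(\alpha)$. Therefore the $H^*(X)$-subalgebra generated by $[Y_1],\ldots,[Y_r]$ contains $f_i^*(H^*(X))\cdot[Y_i]$ for every $i$, and the theorem reduces to the surjectivity of each restriction map $f_i^*\colon H^*(X)\to H^*(Y_i)$.

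Second, I would prove this surjectivity. By Corollary \ref{fondamentale} each $Y_i$ is a product $A\times\overline{\mathcal{M}}_{0,n_1}\times\cdots\times\overline{\mathcal{M}}_{0,n_4}$ with $A\in\{B\mu_3,B\mu_4,B\mu_6,\mathbb{P}(4,6),\mathbb{P}(2,4),\mathbb{P}(2,2)\}$, and by Lemma \ref{cycle} the ring $H^*(Y_i)$ is generated by divisor classes: the boundary divisors of the genus-$0$ factors (Keel) together with, when $A$ is a weighted projective line, the hyperplane class of $A$. It thus suffices to realize each such divisor as $f_i^*$ of a boundary class on $\overline{\mathcal{M}}_{1,n}$, which is precisely the pull-back computation carried out in the following subsection: the gluing map $j_k$ identifies the boundary divisors of each $\overline{\mathcal{M}}_{0,n_j}$ factor with the $D_I$'s of $\overline{\mathcal{M}}_{1,n}$ meeting $Y_i$ transversally, while the hyperplane class of a weighted-projective base is recovered (over $\mathbb{Q}$) from the restriction of $D_{irr}$, in the spirit of the identities $[C_4]=\tfrac12 D_{irr}$, $[C_6]=\tfrac13 D_{irr}$ of Theorem \ref{basedivisori}. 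Granting this, $f_i^*$ is onto, so $H^*(Y_i)=H^*(X)*_{CR}[Y_i]$ and $H^*_{CR}(X)$ is generated over $H^*(X)$ by $[Y_1],\ldots,[Y_r]$.

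Finally I would make the relations explicit. They are of three kinds: the relations holding in $H^*(\overline{\mathcal{M}}_{1,n},\mathbb{Q})$ among the boundary divisors (Getzler's claims, shown algebraic in \cite{panda}); the identities expressing each $[Y_i]$, viewed inside $H^*(X)$ via Corollary \ref{corbasedivisori}, as the explicit polynomial in boundary divisors of Theorem \ref{basedivisori} pushed forward along $j_k$; and, crucially, the structure constants $[Y_i]*_{CR}[Y_j]$. The latter are computed from Definition \ref{orbprod} as $p_{3*}\bigl(p_1^*[Y_i]\cup p_2^*[Y_j]\cup c_{top}(E)\bigr)$ over the relevant component of $I_2(X)$, which by Proposition \ref{doubletwisted} is nonzero only in the short list of Theorem \ref{topchernt}; there $c_{top}(E)$ is $1$, $0$, or a $\psi$-class pulled back from a genus-$0$ factor, and $p_{3*}$ of it is evaluated with the push-forward formulas of the next subsection. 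Assembling these gives a finite, explicit presentation, proving the theorem (and, word for word, its stringy-Chow analogue). The main obstacle is the surjectivity step: one must track carefully how the gluing morphisms $j_k$ carry the boundary stratification of $\overline{\mathcal{M}}_{1,n}$ onto the product stratification of each twisted sector, and separately treat the weighted-projective base factor, whose $H^2$ is generated by a class that is only a rational multiple of a restricted boundary divisor — harmless over $\mathbb{Q}$ but needing bookkeeping.
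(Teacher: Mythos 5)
Your proposal is correct and follows essentially the same route as the paper: generation is reduced to the surjectivity of the restriction maps $f^*$ to the twisted sectors (the paper's Corollary \ref{surge}, proved exactly via the divisor pull-back computations and Lemma \ref{cycle} you invoke), and the relations are read off from the products of fundamental classes, i.e.\ from the top Chern classes of the excess bundles (Theorem \ref{topchernt}) together with the push-forward/pull-back bookkeeping of Corollary \ref{choicecorollary} and Section \ref{prodfund}. The only cosmetic difference is that you make the module identity $\alpha *_{CR} [Y_i]=f_i^*(\alpha)$ explicit via Corollary \ref{semplifica}, which the paper uses implicitly.
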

\begin {proof}  We will prove in Corollary \ref{surge} that $f^*$, the induced pull--back in cohomology, is surjective. This suffices to prove the generation part of the statement. Indeed, let $(X_i,g_i)$ be two twisted sectors, and $f_i: (X_i,g_i) \to X$ be the canonical projection maps. Let $\alpha_i \in H^*((X_i,g_i), \mathbb{Q})$ for $i=1,2$.  Let $\tilde{\alpha_i}$ be two liftings of $\alpha_i$ to $H^*(\overline{\mathcal{M}}_{1,n})$ obtained by using the surjectivity of $f_i^*$. Then we have 
 \begin{displaymath}
\alpha_1 *_{CR} \alpha_2 = (\tilde{\alpha_1} \cup \tilde{\alpha_2}) *_{CR} 1_{(X_1,g)} *_{CR} 1_{(X_2,h)},
\end{displaymath}
 this proves the generation claim.
 
  Then, as a consequence of Proposition \ref{doublesingle}, $(X_1,g) \times_X (X_2,h)$ is connected, and hence a double twisted sector of $I_2(X)$. Let $(X_3,g_1 g_2)$ be such a twisted sector of $I(X)$, and denote by $f_3$ the closed embedding of $(X_3, g_1 g_2)$ in $X$.  Let $p_3: (X_1,g_1) \times_X (X_2,g_2) \rightarrow (X_3, g_1 g_2)$ be the third projection of the double twisted sector as in  \ref{doppinerzia}. Let $E$ be the excess intersection bundle on $(X_1,g_1) \times_X (X_2,g_2)$, and $\gamma:= p_{3 *}(c_{top}(E))$. 

In \ref{choicecorollary} we fix a candidate, for every couple $X_1$, $X_2$ of twisted sectors, of a cohomology class $\beta=\beta\left((X_1,g_1),(X_2,g_2)\right) \in H^*(X,\mathbb{Q})$ such that $p_{3*}(c_{top}(E))= f_3^*(\beta)$. In Section \ref{pullsection} we fix a set of multiplicative generators for $H^*(X, \mathbb{Q})$: boundary divisors, subbanana cycles (\ref{bananacycle}) and any fixed set of additive generators for the odd cohomology of $X$. Our relations depend upon these choices. Finally, we obtain the formula for the Chen--Ruan product

\begin{equation} \label{relazioni}
\alpha_1 *_{CR} \alpha_2= f_3^*(\tilde{\alpha_1} \cup \tilde{\alpha_2} \cup \beta)=  \tilde{\alpha}\left( (X_1,g_1), (X_2,g_2) \right) *_{CR} 1 _{(X_3, g_1 g_2)},  
\end{equation}
where we posed $\tilde{\alpha}\left((X_1,g), (X_2,g_2)\right):=\tilde{\alpha_1} \cup \tilde{\alpha_2} \cup \beta\left((X_1,g_1),(X_2,g_2)\right)$.

 From \eqref{relazioni}, the relations appear naturally divided into two sets.  The first set of relations, presented in Section \ref{prodfund} comes from the Chen--Ruan product of all couples of twisted sectors. The second set of relations comes, for each twisted sector $Y$, from the classes of \mbun \ that are in the kernel of $f^*$. These are all the relations of $H^*(\overline{\mathcal{M}}_{1,n})$ \emph{as a module} over $H^*_{CR}(\overline{\mathcal{M}}_{1,n})$ generated by the fundamental classes of the twisted sectors.  These relations are determined in Section \ref{pullsection}. 
 \end {proof}

\noindent
Note that we actually obtain finitely many generators of the \emph{even part} of $H^*_{CR}(\overline{\mathcal{M}}_{1,n})$ as a $\mathbb{Q}$-algebra as a consequence of Section \ref{pullsection}. What we mean by ``even part'' here is in the original grading on $I($\mbun$)$, without considering the age grading shift. The Chen--Ruan cohomology of \mbun \ as a $\mathbb{Q}$-algebra is generated by the fundamental classes of the twisted sectors, the boundary divisors of \mbun \ and the subbanana cycles (see Definition \ref{bananacycle}). In this description the relations are the two sets of relations described in the proof of Theorem \ref{fundamental}, and the set of relations for $R^*($\mbun$)$ (see \cite{petersen}).

We observe that the first part of the proof of Theorem \ref{fundamental} shows that the fundamental classes of the twisted sectors generate the stringy Chow ring of \mbun \ (see \cite{agv1}, \cite{agv2}) as an algebra over the ordinary Chow ring of \mbun .

\subsection {Pull--backs}
\label{pullsection}
Let now $(Y,g)$  be a twisted sector of \mbun . Let $f: Y \to$ \mbun \ be the restriction to $Y$ of the natural map from the inertia stack to \mbun. In this section we study the pull--back morphism
\begin{displaymath}f^*: H^*(\overline{\mathcal{M}}_{1,n},\mathbb{Q}) \rightarrow H^*(Y, \mathbb{Q}).\end{displaymath}
The main results of this section are: 
\begin {enumerate}
\item the explicit description of the pull--back of the divisor classes of \mbun;
\item the pull--back morphism $f^*$ is determined by its restriction to the subalgebra of the cohomology generated by the divisors (Proposition \ref{subalgebra}). 
\end {enumerate}

Point (1) is enough for proving the generation part of Theorem \ref{prodfund}. Anyway it is only as a consequence of point (2) that we know all relations of $H^*_{CR}(\overline{\mathcal{M}}_{1,n})$ as an $H^*(\overline{\mathcal{M}}_{1,n})$-module (and, together with those of Section \ref{prodfund}, all relations as an $H^*(\overline{\mathcal{M}}_{1,n})$-algebra). 

Let us fix a twisted sector $(Y,g)=(\overline{Z}^{I_1, \ldots, I_k},g)$ (see Theorem \ref{partialtwistedcompact}); and identify it with the product of $\overline{Z} \times$ \mb{0}{I_1+1} $\times \ldots \times $ \mb{0}{I_k+1}. Therefore we have\footnote{see Lemma \ref{cycle} for the next equality} \begin{displaymath} A^*\left(\overline{Z}^{I_1,\ldots, I_k}\right)=H^{2*}(\overline{Z}^{I_1,\ldots, I_k})=A^*(\overline{Z}) \times A^*(\overline{\mathcal{M}}_{0,I_1+1})\times \ldots \times A^*(\overline{\mathcal{M}}_{0,I_k+1}).\end{displaymath} We call $p, p_1,\ldots p_{k}$ the projections onto the factors.  

The relations are obtained as follows
\begin{itemize}
\item take all divisor classes that are in the kernel of $f^*$,
\item for all $1 \leq i \leq k$, and each relation $R$ in $A^1(\overline{\mathcal{M}}_{0,I_i+1})$ (see \cite{keel}), take all linear combinations of divisor classes that pull-back via $f$ to $R$,
\item take any finite set generating $H^{odd}($\mbun$)$,
\item take all the subbanana cycles (see Definition \ref{bananacycle}).
\end{itemize}

Let us first deal with point (1), and thus compute the pull--back morphism for the divisor classes. The notation for the divisors in \mbun \ was introduced in Notation \ref{divisorigenere0e1}.
The pull--back $f^*(d_{irr})$ is zero when the base space $Z$ is a point. Otherwise,

\begin {enumerate}
\item it is $\frac{1}{2}[pt] \times[$\mb{0}{n+1}$]$, when the space is \azero;
\item it is $\frac{3}{2}[pt] \times[$\mb{0}{I_1+1}$]\times[$\mb{0}{I_2+1}$]$, when the space is \auno$^{I_1,I_2}$; 
\item it is $3 [pt] \times[$\mb{0}{I_1+1}$]\times[$\mb{0}{I_2+1}$]\times[$\mb{0}{I_3+1}$]$, when the space is \adue$^{I_1,I_2,I_3}$;
\item it is $3 [pt] \times [$\mb{0}{I_1+1}$]\times[$\mb{0}{I_2+1}$]\times[$\mb{0}{I_3+1}$]\times[$\mb{0}{I_4+1}$]$, when the space is \atre$^{I_1,I_2,I_3,I_4}$.
\end {enumerate}
This is a simple consequence of Theorem \ref{basedivisori}.

 The pullback $f^*([d_M])$ is zero when $M$ is not contained in any of the $I_i$s. Assume now that the base space $Z$ is a point. If $M$ is contained in (wlog) $I_1$, then there are two cases. If $M$ is a proper subset of $I_1$, then \begin{displaymath}f^*([d_M]) \cong [Z] \times  \Delta_M \times [\overline{\mathcal{M}}_{0, I_2+1}] \times \ldots \times [\overline{\mathcal{M}}_{0, I_k+1}].\end{displaymath} Otherwise, if $I=M$, then \begin{displaymath}f^*([d_M]) = p_1^*(-\psi_{I+1}).\end{displaymath}
Finally, if $\overline{Z}$ is one of the one-dimensional spaces $\overline{A_i}$, then the pullback $f^*([d_M])$ is computed by applying Theorem \ref{basedivisori}, similarly to $f^*(d_{irr})$.

A very important theoretical result follows as a corollary of our description of the pull--back morphism.

\begin {corollary} \label{surge} The morphisms $f^*: R^*($\mb{1}{n}$, \mathbb{Q}) \rightarrow A^*(Y, \mathbb{Q})$ are surjective. The same holds for the induced map in cohomology.
\end {corollary}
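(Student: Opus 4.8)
The plan is to prove surjectivity of $f^*$ onto the Chow ring (equivalently the even cohomology) of each twisted sector $Y=\overline{Z}^{I_1,\ldots,I_k}$ by invoking two facts already established: (1) by Lemma \ref{cycle}, $A^*(Y)=H^{2*}(Y)$ and this ring is generated by divisor classes; (2) by the K\"unneth-type decomposition $A^*(Y)=A^*(\overline{Z})\otimes A^*(\overline{\mathcal{M}}_{0,I_1+1})\otimes\cdots\otimes A^*(\overline{\mathcal{M}}_{0,I_k+1})$, it suffices to hit a set of algebra generators. So the whole problem reduces to showing that each divisor generator of each tensor factor lies in the image of $f^*$.

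First I would dispose of the genus-$0$ factors. By Keel's theorem, $A^*(\overline{\mathcal{M}}_{0,I_i+1})$ is generated by the boundary divisors $\Delta_M$ for $M\subsetneq I_i$ (with $|M|\ge 2$). Proposition \ref{divisors} says precisely that for such an $M$, $f^*([D_M]) = [Z]\times\Delta_M\times\prod_{j\ne i}[\overline{\mathcal{M}}_{0,I_j+1}]$, i.e. the pullback of the boundary divisor $D_M$ of $\overline{\mathcal{M}}_{1,n}$ realizes the generator $\Delta_M$ sitting in the $i$-th genus-$0$ slot (tensored with fundamental classes elsewhere). Since products of such classes across different slots are again pullbacks of products of the $D_M$'s (using that $f^*$ is a ring homomorphism), every element of $\bigotimes_i A^*(\overline{\mathcal{M}}_{0,I_i+1})\subset A^*(Y)$ is in the image.

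Next I would handle the base factor $A^*(\overline{Z})$. The possible base spaces are $B\mu_3,B\mu_4,B\mu_6$ (for which $A^*$ is just $\mathbb{Q}$, so there is nothing to prove since $1=f^*(1)$), and $\mathbb{P}(4,6),\mathbb{P}(2,4),\mathbb{P}(2,2)$ (the cases $\overline{A_1},\overline{A_2},\overline{A_3}\cong\overline{A_4}$), each with coarse space $\mathbb{P}^1$ and hence $A^*(\overline{Z})$ generated in degree $1$ by a single divisor class. Here I would use Proposition (the one computing $f^*(D_{irr})$): for $Y=\overline{A_1}^{[n]}$, $f^*(D_{irr})=\tfrac12[pt]\times[\overline{\mathcal{M}}_{0,n+1}]$, and similarly with coefficient $\tfrac32$ or $3$ for $\overline{A_2},\overline{A_3},\overline{A_4}$; in each case the nonzero rational multiple of $[pt]$ in the base slot shows the degree-$1$ generator of $A^*(\overline{Z})$ is hit (after clearing the harmless constant). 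Combining: a set of algebra generators of $A^*(Y)$ — one divisor per tensor factor — all lie in $\operatorname{im}f^*$, hence $f^*$ is surjective; and since $A^*(Y)=H^{2*}(Y)$ compatibly with cycle maps, the statement for cohomology follows immediately.

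The only mild subtlety, which I would address explicitly, is the case where some $I_i$ has cardinality $1$: then $\overline{\mathcal{M}}_{0,I_i+1}$ is a point by the convention in Section \ref{section2b}, that tensor factor is $\mathbb{Q}$, and contributes no generator, so it can simply be ignored. I do not expect a genuine obstacle here — the real content was already front-loaded into Lemma \ref{cycle} and the two pullback propositions — but if I had to name the delicate point it is making sure that the excess-intersection computation giving $f^*([D_{I_1}])=p_1^*(-\psi_{I_1+1})$ (rather than a transverse intersection) is not needed for generation: indeed it is not, because to generate $A^*(\overline{Z})$ one uses $D_{irr}$, and the genus-$0$ slots are generated by the proper-subset divisors $D_M$, so the excess case never enters the surjectivity argument.
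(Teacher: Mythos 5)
Your proposal is correct and follows essentially the same route as the paper: reduce via Lemma \ref{cycle} to surjectivity in degree $1$, use the K\"unneth decomposition of the twisted sector, and then hit the divisor generators of each factor by the explicit pullbacks of boundary divisors of $\overline{\mathcal{M}}_{1,n}$. The only difference is that you spell out which divisor serves which slot ($D_{irr}$ for the base factor $\overline{A_i}$, the $D_M$ with $M\subsetneq I_i$ for the genus-$0$ factors, the excess case $M=I_i$ being unnecessary), which the paper compresses into a one-line appeal to its pullback propositions.
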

\begin {proof} 
Thanks to Lemma \ref{cycle}, it is sufficient to prove that the morphism $f^*: R^1($\mb{1}{n}$) \rightarrow A^1(Y)$ is surjective. The Kunneth decomposition reduces the problem to proving that one can obtain all divisors of each single factor of each twisted sector by pull--back from $R^1($\mb{1}{n}$)$. The above discussion shows that the set of divisor classes $\{d_{irr}, d_M\}_{M \subset [n]}$ surjects via $f^*$ onto $A^1(Y, \mathbb{Q})$. 

\end {proof}
\noindent Here is another way to express this result.
\begin {corollary} \label{cyclic}If $Y$ is a twisted sector, then the cohomology $H^*(Y, \mathbb{Q})$ is an $H^*($\mbun$, \mathbb{Q})$-module generated by the fundamental class $[Y]$.\footnote{Indeed $H^*(Y, \mathbb{Q})$ is cyclic also as an $R^*($\mbun$)$-module, or as a module over the subring of the tautological ring generated by the divisors.}
\end {corollary}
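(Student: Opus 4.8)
The plan is to observe that Corollary \ref{cyclic} is nothing but a module-theoretic reformulation of the surjectivity statement already proved in Corollary \ref{surge}. Recall that $H^*(Y,\mathbb{Q})$ acquires its structure of $H^*(\overline{\mathcal{M}}_{1,n},\mathbb{Q})$-module (in fact algebra) through the closed embedding $f\colon Y\to\overline{\mathcal{M}}_{1,n}$: for $\alpha\in H^*(\overline{\mathcal{M}}_{1,n},\mathbb{Q})$ and $\beta\in H^*(Y,\mathbb{Q})$ the action is $\alpha\cdot\beta:=f^*(\alpha)\cup\beta$. Here the symbol $[Y]$ denotes the fundamental class of the twisted sector as a smooth stack, i.e. the unit $1_Y\in H^0(Y,\mathbb{Q})$, so that $\alpha\cdot[Y]=f^*(\alpha)$ for every $\alpha$. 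Hence the cyclic $H^*(\overline{\mathcal{M}}_{1,n},\mathbb{Q})$-submodule of $H^*(Y,\mathbb{Q})$ generated by $[Y]$ is exactly the image of $f^*$.

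First I would spell out that, by Corollary \ref{surge}, this image is all of $H^*(Y,\mathbb{Q})$, which gives the first assertion. For the sharper claims I would invoke the stronger form of Corollary \ref{surge}: already $f^*$ restricted to $R^*(\overline{\mathcal{M}}_{1,n})$ is surjective, and indeed every divisor class on each factor of $Y$ is hit by the image under $f^*$ of the subring generated by the boundary divisors of $\overline{\mathcal{M}}_{1,n}$. Combining this with the Kunneth decomposition and with the fact that each factor of $Y$ has cohomology generated by divisors (Lemma \ref{cycle}), one concludes that every class on $Y$ lies in the image of that subring. Consequently $H^*(Y,\mathbb{Q})$ is cyclic already as a module over $R^*(\overline{\mathcal{M}}_{1,n})$, and even over the subring of the Tautological Ring generated by the divisor classes.

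Finally, the Chow-ring version is identical: Lemma \ref{cycle} gives that for a twisted sector $Y$ the cycle map $A^*(Y,\mathbb{Q})\to H^{2*}(Y,\mathbb{Q})$ is a ring isomorphism, and Corollary \ref{surge} already states the surjectivity of $f^*$ on Chow groups; so the same argument shows that $A^*(Y,\mathbb{Q})$ is a cyclic $R^*(\overline{\mathcal{M}}_{1,n})$-module on $[Y]$. There is essentially no obstacle here beyond being careful about what $[Y]$ denotes in this context (the unit of $H^*(Y)$, not its push-forward to $\overline{\mathcal{M}}_{1,n}$); once the module structure is written out the statement is immediate from the results of the two preceding subsections.
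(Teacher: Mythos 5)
Your proposal is correct and coincides with the paper's own treatment: the paper offers Corollary \ref{cyclic} with no separate argument, presenting it precisely as a module-theoretic restatement of the surjectivity of $f^*$ from Corollary \ref{surge}, whose proof (via Lemma \ref{cycle}, the Kunneth decomposition and Proposition \ref{divisors}) already yields the refinement over the divisor subring exactly as you describe. Your remark that $[Y]$ here means the unit $1_Y\in H^0(Y,\mathbb{Q})$ rather than its push-forward is the right reading and is the only point needing care.
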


Following the plan established at the beginning of this section, we now deal with point (2). As a combination of \ref{conseteoremstar}, \ref{getzlerclaim} and \eqref{sub}, we have that all cohomology classes of \mbun \ can be written as sums of odd cohomology classes, of subbanana cycles (Definition \ref{bananacycle}), and of products of divisors. This leads to the decomposition
\begin{equation}
H^*(\overline{\mathcal{M}}_{1,n})=H^{odd}(\overline{\mathcal{M}}_{1,n}) \oplus \left(R^{div}_{1,n} + R^{ban}_{1,n} \right),
\end{equation}
where the sum of the last two terms is the vector space generated by boundary strata classes.
It is clear that the pull--back morphism $f^*$ is zero when restricted to the odd cohomology classes. We now prove that $f^*$ is zero when restricted to the vector subspace $R^{ban}_{1,n}$ generated by the subbanana cycles. Using Corollary \ref{corbasedivisori} we explicitly express $Y$ as a linear combination of product of divisor classes in \mbun. The product of a subbanana cycle with all the summands that contain a factor $d_{irr}$ is zero, because $d_{irr}^2=0$, and because a subbanana cycle is, in particular, a closed substack of $d_{irr}$. All the other summands have product zero with the subbanana cycles, because the set theoretic intersection of the substacks of \mbun \ that they describe is empty. We can now conclude.

\begin {proposition} \label{subalgebra} The pull--back morphism $f^*$ is determined by its restriction to the subalgebra of $H^*($\mbun$,\mathbb{Q})$ generated by the divisors.
\end {proposition}

\subsection{Push--forwards}
We now start the study of the push--forward morphism.
Let
\begin{displaymath}
 g:Z \to Y, \quad f:Y \to X 
\end{displaymath}
be respectively the inclusion of a double twisted sector in a twisted sector and of a twisted sector inside $X=$ \mbun. We study the push--forward morphism induced in cohomology by $f$ and $g$. Here is an easy corollary of Corollary \ref{corbasedivisori} and Corollary \ref{surge}.

\begin {corollary} \label{push} The push--forward morphism \begin{displaymath}f_{*}: A^*(Z, \mathbb{Q}) \rightarrow A^*(\overline{\mathcal{M}}_{1,n}, \mathbb{Q})\end{displaymath} has image in the tautological ring. The same holds for the push--forward map in cohomology.
\end {corollary}

Lemma \ref{surge} and Corollary \ref{push} make it possible to define an orbifold tautological ring in genus $1$.
\begin {definition} \label{tautolo} We define the \emph{orbifold tautological ring in genus $1$}
\begin {itemize}
\item Let $R^*_{CR}(\overline{\mathcal{M}}_{1,n})$ be defined as $R^*(\overline{\mathcal{M}}_{1,n}) \oplus \bigoplus H^*((X_i,g_i),  \mathbb{Q})$ as a vector space, where $X_i$ are all the twisted sectors and the grading is inherited from $H^*_{CR}(\overline{\mathcal{M}}_{1,n}, \mathbb{Q})$;
\item the product is the product $*_{CR}$ restricted to this rationally graded vector space.
\end {itemize}
\end {definition}
\noindent Note that, as a consequence of Theorem \ref{getzlerclaim},  $R^*_{CR}(\overline{\mathcal{M}}_{1,n})$ is a Gorenstein ring (a Poincar\'e duality ring).

We show how $g_*([Z])$ can be obtained as a pull--back of a class in $X$ in a canonical way.

\begin {definition} \label{clambda} If $\lambda=4$ or $\lambda=6$, we define $C_{\lambda}^*$ via the following pull--back diagram
\begin{displaymath}\xymatrix{C_{\lambda}^* \ar@{^{(}->}[r] \ar@{}|{\square}[dr] \ar[d] & \overline{\mathcal{M}}_{1,n} \ar[d]^{\pi_1} \\
C_{\lambda} \ar@{^{(}->}[r] & \overline{\mathcal{M}}_{1,1}.
}
\end{displaymath}
\end {definition}

\noindent Note that the equality:
$
[C_{\lambda}^*]= \frac{2}{\lambda} d_{irr}
$ holds in the tautological ring of \mbun.

\begin {proposition} \label {choice} With the notation introduced in this section, for $Z$ a double twisted sector and $Y$ a twisted sector, there is a canonical choice of $W$ closed substack of \mbun, such that $g_*([Z])= f^*([W])$.

\end {proposition}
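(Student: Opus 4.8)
The plan is to go through the classification of double twisted sectors in Proposition \ref{doubletwisted}, together with Corollary \ref{doublesingle}, and to exhibit $W$ explicitly case by case. Existence of \emph{some} $W$ with $g_*([Z])=f^*([W])$ is already guaranteed by the surjectivity of $f^*$ (Corollary \ref{surge}); what has to be done is to make a canonical choice. For a double twisted sector $(Z,g,h,(gh)^{-1})$ the map $g\colon Z\to Y$ is the restriction of the third projection of $I_2(\overline{\mathcal{M}}_{1,n})$, so $Y$ is the twisted sector attached to $gh$ and $Z\subseteq Y$ as closed substacks of $\overline{\mathcal{M}}_{1,n}$. First I would note that, inspecting the list, in all cases but three the substack $Z$ actually coincides with $Y$ (indeed $(Z,gh)$ is itself a twisted sector); then $g$ is the identity, $g_*([Z])=[Y]$, and one takes $W=\overline{\mathcal{M}}_{1,n}$, so that $f^*([W])$ is the fundamental class of $Y$, canonically.

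Next I would treat the three remaining cases, in which $Z$ is strictly contained in $Y$: namely $Z=C_4^{[n]}$ and $Z=C_6^{[n]}$ inside $Y=\overline{A_1}^{[n]}$, and $Z=C_4^{I_1,I_2}$ inside $Y=\overline{A_2}^{I_1,I_2}$. Using the product descriptions of these twisted sectors from Theorem \ref{twistedcompact} and the one--dimensionality of $A^1(\overline{\mathcal{M}}_{1,1})\otimes\mathbb{Q}$ and $A^1(\overline{A_2})\otimes\mathbb{Q}$, the class $g_*([Z])$ is, up to the fundamental classes of the genus $0$ factors, the class of the orbifold point $C_a$ (respectively $C_4'$) inside the relevant weighted projective line, which is recorded in Theorem \ref{basedivisori}: $[C_4]=\tfrac12 D_{irr}$ and $[C_6]=\tfrac13 D_{irr}$ in $A^1(\overline{\mathcal{M}}_{1,1})$, and analogously for $C_4'$ in $A^1(\overline{A_2})$. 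I would then take $W=C_a^{*}$, the fibre product of $\pi_1\colon\overline{\mathcal{M}}_{1,n}\to\overline{\mathcal{M}}_{1,1}$ with $C_a\hookrightarrow\overline{\mathcal{M}}_{1,1}$ (and, for the last case, the corresponding canonically defined substack restricting to $C_4'$), whose class is a rational multiple of $D_{irr}$ in $R^*(\overline{\mathcal{M}}_{1,n})$. The equality $f^*([W])=g_*([Z])$ is then checked by pulling $D_{irr}$ back to $Y$ via the explicit formula for $f^*(D_{irr})$ (the proposition just before Proposition \ref{divisors}), the self--intersection contribution being computed with Lemma \ref{referenzaimpossibile}; this also fixes the exact rational coefficient.

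As a cross--check I would use the projection formula: $f_*\bigl(f^*([W])\bigr)=[W]\cdot f_*[Y]=[W]\cdot[Y]$ and $f_*\bigl(g_*([Z])\bigr)=[Z]$ in $A^*(\overline{\mathcal{M}}_{1,n})$, so that $[W]\cdot[Y]=[Z]$ must hold; this tautological identity is verified directly from Theorem \ref{basedivisori} and Corollary \ref{corbasedivisori}, using $D_{irr}^2=0$ (Lemma \ref{zerosq}) and the emptiness of the relevant set--theoretic boundary intersections. I expect the main obstacle to be organizational rather than conceptual: keeping the short list of genuinely non--trivial cases under control, matching all the rational coefficients, and --- for $\overline{A_2}^{I_1,I_2}$ --- making the choice of $W$ explicitly $S_n$--equivariant, which forces one to describe $[W]$, like $[\overline{A_2}^{I_1,I_2}]$ and $[C_4']$ in Theorem \ref{basedivisori}, as a symmetric combination of boundary divisors rather than via an asymmetric partial forgetful map.
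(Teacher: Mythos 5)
Your proposal follows essentially the same route as the paper's proof of Proposition \ref{choice}: run through the list of Proposition \ref{doubletwisted}, take $W=\overline{\mathcal{M}}_{1,n}$ in the cases where $Z$ coincides with the target sector, observe that the only genuinely nontrivial pairs are $C_4^{[n]},C_6^{[n]}\subset\overline{A_1}^{[n]}$ and $C_4^{I_1,I_2}\subset\overline{A_2}^{I_1,I_2}$, and for these use the $S_n$-invariant divisor $C_a^*=\pi_1^{-1}(C_a)$, checking the identity through the explicit pullback of $D_{irr}$ and the classes of Theorem \ref{basedivisori}. That is exactly the paper's argument, including the choice $W=C_a^*$.

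Two concrete points of divergence, one of which is a genuine flaw in your write-up. First, in the case $Z=C_4^{I_1,I_2}\subset Y=\overline{A_2}^{I_1,I_2}$ the paper does not pass to a locus lying over $C_4'$: it takes $W=C_4^*$ again, which is already $S_n$-invariant, so your closing worry about an asymmetric partial forgetful map is moot; and your alternative, as stated, cannot work: the preimage of $C_4'\subset\overline{\mathcal{M}}_{1,2}$ under a partial forgetful map has codimension two, its class is not a rational multiple of $D_{irr}$, and its intersection with $\overline{A_2}^{I_1,I_2}$ is non-proper, so its pullback cannot equal the codimension-one class $g_*([Z])$. What does work is precisely your coefficient-fixing step: $A^1(\overline{A_2})\otimes\mathbb{Q}$ is one-dimensional, so $g_*([Z])$ is a multiple of $f^*(D_{irr})$, and the multiple is pinned down by $f^*(D_{irr})$ on the $\overline{A_2}$-factor together with the degree of the $B\mu_4$-point of $\mathbb{P}(2,4)$, or equivalently by your cross-check using $D_{irr}^2=0$ (Lemma \ref{zerosq}). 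Carrying this out gives $\tfrac16 D_{irr}$ rather than $[C_4^*]=\tfrac12 D_{irr}$, because $C_4^*\cap\overline{A_2}^{I_1,I_2}$ contains, besides $C_4^{I_1,I_2}$, the locus where the attaching node is one of the two $2$-torsion points interchanged by $i$; so the coefficient verification you propose is genuinely needed (and in fact adjusts the constant appearing in the paper's case $(3)$). Second, your blanket claim that outside the three cases $Z=Y$ overlooks the labelings with $gh=1$, e.g.\ $(\overline{A_1}^{[n]},(-1,-1,1))$: there $Y$ is the untwisted sector, $Z\subsetneq Y$, and $W=\overline{\mathcal{M}}_{1,n}$ gives $f^*([W])=1\neq g_*([Z])$; the canonical choice there is $W=Z$ itself, whose class is tautological by Theorem \ref{basedivisori} and Corollary \ref{corbasedivisori}. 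The paper's wording shares this looseness, and these cases play no role in the product formulas, but your argument should state the choice there correctly.
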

\begin {proof} The only cases are, thanks to Proposition \ref{doubletwisted}:
\begin {enumerate}
\item it happens that $Z=Y$ or $Y=X$. In all these cases we choose $W:=X$; 
\item either $Z=C_{\lambda}^{[n]}$ for $\lambda=4,6$ and $Y=$\azero. In these cases we choose $W:=C_{\lambda}^*$;
\item or $Z=C_{\lambda}^{I_1,I_2}$ and $Y=$\auno$^{I_1,I_2}$. In these cases we choose $W=C_{\lambda}^*$. 
\end {enumerate}
One can easily check that these are all the cases that occur, and that the intersections are transversal.
\end {proof}

We have just fixed the cohomology classes that represent via pull--back the push--forward of all the fundamental classes. This choice determines the top Chern class of the excess intersection bundles via the projection formula.

\begin {corollary} \label{choicecorollary} Let now $E$ be the excess intersection bundle over the double twisted sector $Z$. Once the choice of \ref{choice} is fixed, a cohomology class $\beta$ on \mbun \ is determined such that:

\begin{displaymath}
g_*(c_{top}(E))=f^*(\beta).
\end{displaymath}
\end {corollary}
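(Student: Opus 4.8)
The plan is to obtain the statement as a projection–formula consequence of facts already proved: the explicit form of $c_{top}(E)$ (Theorem \ref{topchernt}), the factorization of the inclusion of $Z$ into $X$ through $Y$, and the identity $g_{*}([Z])=f^{*}([W])$ of Proposition \ref{choice}. No new geometric input is required.

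First I would record that, in each of the finitely many cases, the natural closed embedding $\iota\colon Z\hookrightarrow X=\overline{\mathcal{M}}_{1,n}$ factors as $\iota=f\circ g$, where $g\colon Z\to Y$ is the inclusion of the double twisted sector $Z$ into the twisted sector $Y$ and $f\colon Y\to X$ is the inclusion of $Y$; this is read off the list in Proposition \ref{doubletwisted} and the case analysis of Proposition \ref{choice}. Next I would check that $c_{top}(E)$ lies in the image of $\iota^{*}$; abstractly this holds because $Z$, being by Corollary \ref{doublesingle} the support of a twisted sector, has $\iota^{*}\colon R^{*}(X)\to A^{*}(Z)$ surjective (Corollary \ref{surge}), but one gets an explicit lift directly: by Theorem \ref{topchernt} the class $c_{top}(E)$ is $0$, or $1$, or a class $-p_1^{*}(\psi_{n+1})$ on a component $Z=C_a^{[n]}\cong B\mu_a\times\overline{\mathcal{M}}_{0,n+1}$, and in the last case Proposition \ref{divisors}, applied to the twisted sector $C_a^{[n]}$ with $M=[n]=I_1$, gives $\iota^{*}([D_{[n]}])=-p_1^{*}(\psi_{n+1})=c_{top}(E)$. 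So in every case $c_{top}(E)=\iota^{*}(\gamma)=g^{*}(f^{*}\gamma)$ for an explicit $\gamma\in R^{*}(X)$, namely $0$, $1$, or $[D_{[n]}]$.

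Then I would apply the projection formula for the closed embedding $g$ of smooth stacks: $g_{*}(c_{top}(E))=g_{*}\big(g^{*}(f^{*}\gamma)\cup[Z]\big)=f^{*}(\gamma)\cup g_{*}([Z])$. By Proposition \ref{choice} one has $g_{*}([Z])=f^{*}([W])$ for the canonically chosen closed substack $W\subset X$, so $g_{*}(c_{top}(E))=f^{*}(\gamma)\cup f^{*}([W])=f^{*}(\gamma\cup[W])$, and $\beta:=\gamma\cup[W]$ is the desired class, determined once the choice of $W$ in Proposition \ref{choice} is fixed and $\gamma$ is taken as above.

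I do not expect a real obstacle: the substantive work has been front-loaded into Theorem \ref{topchernt} and Proposition \ref{choice}. The only points needing care are (i) verifying case by case that $\iota$ genuinely factors through $Y$, so that $\iota^{*}=g^{*}f^{*}$ (this uses the transversality statements already asserted in Proposition \ref{choice}), and (ii) matching cohomological degrees, i.e.\ that $\codim(W,X)=\codim(Z,Y)$ and $\deg\gamma=2\,\rk(E)$, so that $f^{*}(\gamma\cup[W])$ lands in the same degree as $g_{*}(c_{top}(E))$.
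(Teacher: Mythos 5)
Your argument is correct and is essentially the paper's own: the paper fixes the same representatives ($\beta=0$, $\beta=[W]$, $\beta=D_{[n]}$, $\beta=D_{[n]}\cup[C_a^*]$) case by case, and the mechanism it invokes (stated just before the corollary) is exactly the projection formula combined with Proposition \ref{choice} and the pull-back formula $f^*(D_{[n]})=-p_1^*(\psi_{n+1})$ of Proposition \ref{divisors}. Your uniform derivation $\beta=\gamma\cup[W]$ merely makes that case analysis systematic, so there is nothing to correct.
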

\begin {proof} If the top Chern class of $E$ is zero, we fix $\beta$ to be zero. When the top Chern class is $1$, the choice of Proposition \ref{choice} determines the class $\beta$ of this corollary too. The list of non trivial top Chern classes of excess intersection bundles (non-zero and not $1$), is given in \ref{topchern}. So, if the top Chern class is a $\psi$ class, there are only two possibilities: either the double twisted sector $Z$ is isomorphic to the twisted sector $Y$ (case $1$ of Corollary \ref{topchern}), or $Z=C_{\lambda}^{[n]}$ and $Y=\overline{A_1}^{[n]}$ (cases $2$ and $3$ of Corollary \ref{topchern}). In the first case, we choose $\beta$ to be $d_{[n]}$, the divisor with all markings on the genus $0$ component, and in the second case we choose \begin{displaymath}\beta:= d_{[n]} \cup [C_{\lambda}^*],
\end{displaymath}
\noindent where $C_{\lambda}^*$ is defined in \ref{clambda}.
\end {proof}
\subsection {Products of the fundamental classes of the twisted sectors}
\label{prodfund}
 If $X_i,X_j$ are twisted sectors, in this section we compute all the products $1_{X_i} *_{CR} 1_{X_j}$. In the perspective of Theorem \ref{fondamentale}, this section and Section \ref{pullsection}, provide the relations of $H^*_{CR}(\overline{\mathcal{M}}_{1,n})$ as an $H^*(\overline{\mathcal{M}}_{1,n})$-algebra generated by the fundamental classes of the twisted sectors. 

\begin {remark} \label {precedente} An explicit computation of all intersections of the twisted sectors, shows that besides the orbifold intersections of the kinds $(1_{X_i},\alpha) *_{CR} (1_{X_i}, \beta)$, and besides the trivial products $1_{\overline{\mathcal{M}}_{1,n}} *_{CR} 1_{X_i}$, the only pairs of twisted sectors whose fundamental classes give rise to non-zero Chen--Ruan products are in the following list:
\begin {enumerate}
\item $(($\azero$,-1), (C_4^{[n]},i/-i))$,
\item $(($\azero$,-1), (C_6^{[n]},\epsilon/ \epsilon^2/\epsilon^4/\epsilon^5))$, 
\item $(($\auno$^{I_1,I_2},-1), (C_4^{I_1,I_2}, i/-i))$.
\end {enumerate}
\end {remark}

\noindent We now compute the products of the pairs just described. Here if  $(X, \alpha)$ is a twisted sector, we write $H^*((X, \alpha), \mathbb{Q})$, which is a direct summand of $H^*_{CR}($\mbun$, \mathbb{Q})$ with its own grading. In other words, we implicitly assume the inclusion \begin{displaymath}i: H^*((X,\alpha), \mathbb{Q}) \subset H^*_{CR}(\overline{\mathcal{M}}_{1,n}, \mathbb{Q})\end{displaymath}
shifts the degree by twice the age of $(X, \alpha)$.
As usual, $-p_1^*(\psi_{\bullet})$ is the Chern class of Corollary \ref{topchern} and Theorem \ref{topchernt}.

\begin {corollary} \label{primo} With our usual notation for the twisted sectors, and with the notation introduced above, here is the explicit result of all the Chen--Ruan products described in Remark \ref{precedente}:
\begin {enumerate}
\item $[(C_4^{[n]}, i)] *_{CR} [(\overline{A_1}^{[n]},-1)]= p_1^*(-\psi_{\bullet}) \ \in H^2((C_4^{[n]},-i), \mathbb{Q})$,

\item $[(C_4^{[n]}, -i)] *_{CR} [(\overline{A_1}^{[n]},-1)]= [C_4^{[n]}] \ \in H^0((C_4^{[n]},i), \mathbb{Q})$,

\item $[(C_6^{[n]}, \epsilon)] *_{CR} [(\overline{A_1}^{[n]},-1)]= p_1^*(-\psi_{\bullet}) \ \in H^2((C_6^{[n]}, \epsilon^4), \mathbb{Q})$,

\item $[(C_6^{[n]}, \epsilon^2)] *_{CR} [(\overline{A_1}^{[n]},-1)]= p_1^*(-\psi_{\bullet}) \ \in H^2((C_6^{[n]},\epsilon^5), \mathbb{Q})$,

\item $[(C_6^{[n]}, \epsilon^4)] *_{CR} [(\overline{A_1}^{[n]},-1)]= [C_6^{[n]}] \ \in H^0((C_6^{[n]},\epsilon), \mathbb{Q})$,

\item $[(C_6^{[n]}, \epsilon^5)] *_{CR} [(\overline{A_1}^{[n]},-1)]= [C_6^{[n]}] \ \in H^0((C_6^{[n]},\epsilon^2), \mathbb{Q})$,

\item $[(\overline{A_2}^{I_1,I_2},-1)] *_{CR} [(C_4^{I_1,I_2},i)]=0 \in H^2((C_4^{I_1,I_2},-i),\mathbb{Q})$,

\item $[(\overline{A_2}^{I_1,I_2},-1)] *_{CR} [(C_4^{I_1,I_2},-i)]= [C_4^{I_1,I_2}] \in H^2((C_4^{I_1,I_2},i),\mathbb{Q})$,
\end {enumerate}
\end {corollary}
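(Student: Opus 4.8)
The plan is to unwind, for each of the eight pairs in Remark \ref{precedente}, the definition $1_{X_1} *_{CR} 1_{X_2} = p_{3*}\big(p_1^*(1_{X_1}) \cup p_2^*(1_{X_2}) \cup c_{top}(E)\big)$ in a completely uniform way. By Corollary \ref{doublesingle} the fibre product $(X_1,g)\times_X(X_2,h)$ is a single connected double twisted sector $Z$, one of those listed in Proposition \ref{doubletwisted}; its label $(g,h,(gh)^{-1})$ is obtained by multiplying the two automorphisms, and its underlying stack is the set-theoretic intersection of the two loci inside $X=\overline{\mathcal{M}}_{1,n}$ (in all eight cases one locus contains the other, so this intersection is just the smaller sector, a copy of some $C_a^{[n]}$ or $C_a^{I_1,I_2}$). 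Since both inputs are fundamental classes, their pullbacks are the fundamental class of $Z$ and $p_1^*(1_{X_1})\cup p_2^*(1_{X_2})=1_Z$, so the whole computation collapses to $p_{3*}(c_{top}(E))$, where $p_3:Z\to (X_3,gh)$ is the third projection and $X_3$ is the sector attached to $gh$.

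The second step is to read off $c_{top}(E)$ from the work already done: by Theorem \ref{topchernt} (equivalently Corollary \ref{topchern}) it is $0$ for the sectors whose excess bundle has rank $3$ or $5$ (these contain a trivial sub-bundle by Remark \ref{semplifica2}), it is $1$ for the rank-$0$ excess bundles of Proposition \ref{lista2} (the $(-i,-i)$, $(\epsilon^4,\epsilon^5)$, $(\epsilon^5,\epsilon^5)$ types), and it is $-p_1^*(\psi_\bullet)=-p_1^*(\psi_{n+1})$ for the three sectors of Corollary \ref{topchern}. In each of the eight cases $p_3$ is the identity on the underlying stack, so $p_{3*}(c_{top}(E))$ is exactly the asserted class, placed in $H^*\big((X_3,gh)\big)$ with the grading shifted by the age recorded in Corollary \ref{agembuncor}. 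For example, for $[(C_4^{[n]},i)]*_{CR}[(\overline{A_1}^{[n]},-1)]$ one has $gh=-i$, the double twisted sector is the $(i,i,-1)$-type sector on $C_4^{[n]}$ with $c_{top}(E)=-p_1^*(\psi_{n+1})$, giving $p_1^*(-\psi_\bullet)\in H^2((C_4^{[n]},-i))$; for $[(C_4^{[n]},-i)]*_{CR}[(\overline{A_1}^{[n]},-1)]$ one has $gh=i$, the sector is of $(-i,-i)$-type with $c_{top}(E)=1$, giving the fundamental class $[C_4^{[n]}]\in H^0((C_4^{[n]},i))$; the four $C_6^{[n]}$ cases and the two $\overline{A_2}^{I_1,I_2}$ cases (where the rank-$3$ bundle forces product $0$, resp. the rank-$0$ bundle gives the fundamental class of $C_4^{I_1,I_2}$) are handled identically.

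An equivalent and perhaps cleaner bookkeeping is to invoke Formula \ref{generatori}: with the liftings $\tilde\alpha_i=[X_i]\in H^*(\overline{\mathcal{M}}_{1,n})$ and the class $\beta$ fixed in Corollary \ref{choicecorollary}, one gets $1_{X_1}*_{CR}1_{X_2}=f_3^*\big([X_1]\cup[X_2]\cup\beta\big)$, and now $[X_1]\cup[X_2]$ is evaluated from the divisor expressions of Theorem \ref{basedivisori} via Corollary \ref{corbasedivisori}, $\beta$ equals $0$, $[C_a^*]$, or $D_{[n]}\cup[C_a^*]$ by Corollary \ref{choicecorollary}, and $f_3^*$ is computed by Proposition \ref{divisors}; the two routes agree. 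The main obstacle I expect is not conceptual but careful tracking of identifications: confirming that in each of the eight cases $p_3$ really is an isomorphism of twisted sectors (rather than a $\mu_a$-gerbe map carrying a multiplicity), pinning down that the $\psi$-class appearing is $\psi_\bullet=\psi_{n+1}$ on the correct genus-$0$ factor and with the correct sign, and threading the age-shifted gradings so that a pushed-forward fundamental class lands in the claimed cohomological degree. Everything beyond that is a direct substitution into formulas already established.
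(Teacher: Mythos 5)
Your proposal is correct and is essentially the argument the paper intends: the corollary is stated there as a direct consequence of the same ingredients you use, namely the identification of the relevant double twisted sectors (Proposition \ref{doubletwisted}), the ranks and top Chern classes of the excess bundles (Proposition \ref{lista2}, Remark \ref{semplifica2}, Corollary \ref{topchern}/Theorem \ref{topchernt}), and the push--pull formula from the proof of Theorem \ref{fundamental} with the choices of Corollary \ref{choicecorollary}. Your degree bookkeeping via the ages also matches the paper's conventions (note it even shows the fundamental class in item 8 sits in degree $0$ of the sector, as in item 2), so nothing further is needed.
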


\begin {corollary} With our usual notation for the twisted sectors, and with the notation introduced above, here we recollect all the products of the kind $(1_{X_i},\alpha) *_{CR} (1_{X_i}, \beta)$: 

\begin{enumerate}

\item $[(X, \alpha)] *_{CR} [(X, \alpha^{-1})]= [X] \ \in H^*($\mbun$, \mathbb{Q})$,

\item $[(C_4^{[n]}, i)] *_{CR} [(C_4^{[n]}, i)] =  p_1^*(-\psi_{\bullet}) \cap [C_4^{[n]}] \ \in H^4((\overline{A_1}^{[n]},-1), \mathbb{Q})$,
\item $[(C_4^{[n]}, -i)] *_{CR} [(C_4^{[n]}, -i)] = [C_4^{[n]}] \ \in H^2((\overline{A_1}^{[n]},-1), \mathbb{Q})$,

\item $[(C_4^{I_1,I_2}, i)] *_{CR} [(C_4^{I_1,I_2}, i)] = 0 \ \in H^4((\overline{A_2}^{I_1,I_2},-1), \mathbb{Q})$,
\item $[(C_4^{I_1,I_2}, -i)] *_{CR} [(C_4^{I_1,I_2}, -i)] = [C_4^{I_1,I_2}] \ \in H^2((\overline{A_2}^{I_1,I_2},-1), \mathbb{Q})$,

\item $[(C_6^{[n]},\epsilon)] *_{CR} [(C_6^{[n]},\epsilon)]=0 \in H^4((C_6^{[n]},\epsilon^2), \mathbb{Q})$,

\item $[(C_6^{[n]},\epsilon)] *_{CR} [(C_6^{[n]},\epsilon^2)]= p_1^*(-\psi_{\bullet}) \cap [C_6^{[n]}] \ \in H^4((\overline{A_1}^{[n]},-1), \mathbb{Q})$,

\item $[(C_6^{[n]},\epsilon)] *_{CR} [(C_6^{[n]},\epsilon^4)]=0 \in H^4((C_6^{[n]},\epsilon^5), \mathbb{Q})$,

\item $[(C_6^{[n]},\epsilon^2)] *_{CR} [(C_6^{[n]},\epsilon^2)]=p_1^*(-\psi_{\bullet}) \in H^2((C_6^{[n]},\epsilon^4), \mathbb{Q})$,

\item $[(C_6^{[n]},\epsilon^2)] *_{CR} [(C_6^{[n]},\epsilon^5)]=p_1^*(-\psi_{\bullet}) \in H^2((C_6^{[n]},\epsilon), \mathbb{Q})$,

\item $[(C_6^{[n]},\epsilon^4)] *_{CR} [(C_6^{[n]},\epsilon^4)]=0 \in H^2((C_6^{[n]},\epsilon^2), \mathbb{Q})$,

\item $[(C_6^{[n]},\epsilon^4)] *_{CR} [(C_6^{[n]},\epsilon^5)]=[C_6^{[n]}] \ \in H^2((\overline{A_1}^{[n]},-1), \mathbb{Q})$,

\item $[(C_6^{[n]},\epsilon^5)] *_{CR} [(C_6^{[n]},\epsilon^5)]=[C_6^{[n]}] \ \in H^0((C_6^{[n]},\epsilon^4), \mathbb{Q})$,

\item $[(C_6^{I_1,I_2,I_3}, \epsilon^2)] *_{CR} [(C_6^{I_1,I_2,I_3}, \epsilon^2)]= 0 \ \in H^4((C_6^{I_1,I_2,I_3}, \epsilon^4), \mathbb{Q})$ and the result still holds when $I_3= \emptyset$,

\item $[(C_6^{I_1,I_2,I_3}, \epsilon^4)] *_{CR} [(C_6^{I_1,I_2,I_3}, \epsilon^4)]= 0 \ \in H^2((C_6^{I_1,I_2,I_3}, \epsilon^2), \mathbb{Q})$ and the result still holds when $I_3= \emptyset$,

\end {enumerate}
Moreover, the product of two fundamental classes of twisted sectors that do not belong to this list, nor to the one of Corollary \ref{primo}, is zero.
\end {corollary}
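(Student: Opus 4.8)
The strategy is to compute $1_{(X_1,g)} *_{CR} 1_{(X_2,h)}$ directly from Definition \ref{orbprod}. Writing $Z = (X_1,g)\times_X(X_2,h)$ for the double twisted sector, the pullbacks $p_1^*(1_{(X_1,g)})$ and $p_2^*(1_{(X_2,h)})$ are both the unit of $H^*(Z)$, so the product collapses to
$$
1_{(X_1,g)} *_{CR} 1_{(X_2,h)} = p_{3*}\bigl(c_{top}(E)\bigr) \in H^*\bigl((X_3,gh),\mathbb{Q}\bigr),
$$
where $p_3 \colon Z \to (X_3,gh)$ is the third projection of diagram \ref{secondproj} and $E$ is the excess intersection bundle. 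The computation therefore splits into three tasks: deciding which pairs give a nonempty $Z$, reading off $c_{top}(E)$, and pushing forward along $p_3$.

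For the first task I would invoke Proposition \ref{doubletwisted}: a pair of twisted sectors contributes a possibly nonzero product only when the triple $(g,h,(gh)^{-1})$ — up to the $S_3$-action and the involution of Remark \ref{lista0} — is one of the double twisted sectors listed there, and by Corollary \ref{doublesingle} the fiber product is then connected, so $Z$ is exactly that double twisted sector. This yields the exhaustive list of candidate pairs: the ``diagonal'' pairs $(1_{(X_1,g)}, 1_{(X_1,g^{-1})})$ with $gh=1$; the pairs with $g$ or $h$ equal to the identity, which are the trivial products $1_{\overline{\mathcal{M}}_{1,n}} *_{CR} 1_{(X_i,\cdot)}$; the mixed pairs enumerated in Remark \ref{precedente} (already treated in Corollary \ref{primo}); and the remaining self-pairs $(1_{X_i},\alpha) *_{CR} (1_{X_i},\beta)$. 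Everything outside this list has $Z = \emptyset$, hence product zero, which already accounts for part of the final ``moreover'' clause.

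For the second task I would run through, in turn: Corollary \ref{semplifica} (if one of $g,h,gh$ is trivial then $E=0$, so $c_{top}(E)=1$); the list \ref{lista} with Proposition \ref{lista2} (only the sectors labelled $(\epsilon^2,\epsilon^2,\epsilon^2)$, $(i,i,-1)$, $(\epsilon,\epsilon,\epsilon^4)$, $(\epsilon,\epsilon^2,-1)$ have $\mathrm{rk}\,E>0$, with the ranks as tabulated); Remark \ref{semplifica2} and Corollary \ref{zero} (the rank $3$ and $5$ bundles, and those attached to $(\epsilon^4,\epsilon^4)$ and $(\epsilon,\epsilon)$, carry a trivial line subbundle, so $c_{top}(E)=0$); and finally Corollary \ref{topchern} together with Theorem \ref{topchernt} (the three surviving top Chern classes are each $-p_1^*(\psi_\bullet)=-p_1^*(\psi_{n+1})$, and they vanish when $n=2$ because the sectors are then points). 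A point of care here is to pick the correct member of an involution-pair of excess bundles over a given $Z$: over $C_4^{[n]}$ the pair $(g,h)=(i,i)$ gives a rank $1$ bundle with $c_{top}=-p_1^*\psi_\bullet$, whereas $(-i,-i)$ gives a rank $0$ bundle with $c_{top}=1$ — this is exactly formula \ref{formulaeccesso2} at work.

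For the third task the map $p_3$ is of one of three kinds: an isomorphism onto $(X_3,gh)$ when $Z \cong C_a^{[n]}$ and $gh$ still fixes $C_a^{[n]}$, so that $p_{3*}(c_{top}(E))=c_{top}(E)$ lands in the summand and degree prescribed by the ages of Corollary \ref{agembuncor} via formula \ref{formulaeccesso1}; the divisorial embedding $C_a^{[n]} \hookrightarrow \overline{A_1}^{[n]}$ when $gh=-1$; or the embedding of $Z$ into the untwisted sector $\overline{\mathcal{M}}_{1,n}$ when $gh=1$. In the last case $p_{3*}(1)=[X_1]$ recovers the identity $[(X,\alpha)]*_{CR}[(X,\alpha^{-1})]=[X]$; in the divisorial case the projection formula, using that $\psi_\bullet$ is pulled back from the shared genus-$0$ factor, gives $p_{3*}(-p_1^*\psi_\bullet)=p_1^*(-\psi_\bullet)\cap[C_a^{[n]}]$ and $p_{3*}(1)=[C_a^{[n]}]$. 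Assembling all the cases yields the two tables, and the pairs not appearing have $Z=\emptyset$, or $c_{top}(E)=0$, or are trivial products, which completes the ``moreover'' clause. The real obstacle is not a missing geometric idea — every input lives in Sections $5$ and $6$ — but the bookkeeping: for each listed configuration one must pick the right bundle out of an involution-pair, classify $p_3$, and place the answer in the exact rationally-graded summand $H^{d-2a(X_3,gh)}\bigl((X_3,gh),\mathbb{Q}\bigr)$ forced by the age computation, while checking $S_n$-equivariance throughout.
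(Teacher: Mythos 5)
Your proposal is correct and follows essentially the same route the paper intends for this corollary: the product of two unit classes reduces to $p_{3*}(c_{top}(E))$, the nonempty double sectors are read off from Proposition \ref{doubletwisted} and Corollary \ref{doublesingle}, the top Chern classes from Proposition \ref{lista2}, Corollary \ref{zero}, Corollary \ref{topchern} and Theorem \ref{topchernt}, and the push--forward is handled by the projection formula and the age grading exactly as in the treatment of Corollary \ref{primo}. One remark: carried out literally, your method gives $[(C_6^{[n]},\epsilon^2)]*_{CR}[(C_6^{[n]},\epsilon^5)]=[C_6^{[n]}]\in H^0((C_6^{[n]},\epsilon),\mathbb{Q})$, since the relevant excess bundle has rank $1+\tfrac{1}{2}+\tfrac{1}{2}-2=0$, and this is what the grading of Theorem \ref{prodotto} and the example tables for $n\leq 3$ require, rather than the class $p_1^*(-\psi_{\bullet})\in H^2((C_6^{[n]},\epsilon),\mathbb{Q})$ printed in item $10$; that discrepancy is a misprint in the statement, not a gap in your argument.
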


\begin{center} {\bf{Acknowledgments}} \end{center} I would like to acknowledge my advisor, Barbara Fantechi, for suggesting to me the topic of this work and for helpful discussions and suggestions. 
I would like to thank Gilberto Bini, Carel Faber and Orsola Tommasi for sharing with me their ideas devoted to further developments of the present work. A special thank to Sebastian Krug, who discovered some mistakes in the previous version of this preprint, and to the anonymous referee, who improved the quality of the manuscript. 
This work was supported by \texttt{SISSA}, \texttt{GNSAGA}, the Clay Mathematical Institute and \texttt{KTH}, Royal Institut of Technology

\begin{center}
Nicola Pagani,\\
Leibniz Universit\"at Hannover,
Welfengarten 1,\\
D-30167 Hannover, Germany.\\
e-mail: npagani@math.uni-hannover.de

\end{center}

\end{document}